\DeclareMathAlphabet{\mathcal}{OMS}{cmsy}{m}{n}
\def\ps@pprintTitle{%
 \let\@oddhead\@empty
 \let\@evenhead\@empty
 \def\@oddfoot{\centerline{\thepage}}%
 \let\@evenfoot\@oddfoot}
\newcommand{\bbC}{\mathbb{C}}
\newcommand{\bbF}{\mathbb{F}}
\newcommand{\bbH}{\mathbb{H}}
\newcommand{\bbR}{\mathbb{R}}
\newcommand{\bbT}{\mathbb{T}}
\newcommand{\bbZ}{\mathbb{Z}}
\newcommand{\bfA}{\mathbf{A}}
\newcommand{\bfB}{\mathbf{B}}
\newcommand{\bfG}{\mathbf{G}}
\newcommand{\bfI}{\mathbf{I}}
\newcommand{\bfJ}{\mathbf{J}}
\newcommand{\bfL}{\mathbf{L}}
\newcommand{\bfS}{\mathbf{S}}
\newcommand{\bfT}{\mathbf{T}}
\newcommand{\bfx}{\mathbf{x}}
\newcommand{\bfX}{\mathbf{X}}
\newcommand{\bfy}{\mathbf{y}}
\newcommand{\bfY}{\mathbf{Y}}
\newcommand{\bfz}{\mathbf{z}}
\newcommand{\bfZ}{\mathbf{Z}}
\newcommand{\bfzero}{\boldsymbol{0}}
\newcommand{\bfone}{\boldsymbol{1}}
\newcommand{\bfdelta}{\boldsymbol{\delta}}
\newcommand{\bfphi}{\boldsymbol{\varphi}}
\newcommand{\bfPhi}{\boldsymbol{\Phi}}
\newcommand{\bfPsi}{\boldsymbol{\Psi}}
\newcommand{\bfPi}{\boldsymbol{\Pi}}
\newcommand{\calB}{\mathcal{B}}
\newcommand{\calG}{\mathcal{G}}
\newcommand{\calO}{\mathcal{O}}
\newcommand{\calV}{\mathcal{V}}
\newcommand{\rmc}{\mathrm{c}}
\newcommand{\rmK}{\mathrm{K}}
\newcommand{\rmi}{\mathrm{i}}
\newcommand{\rmT}{\mathrm{T}}
\newcommand{\orb}{\operatorname{orb}}
\newcommand{\id}{\operatorname{id}}
\newcommand{\Tr}{\operatorname{Tr}}
\newcommand{\GQ}{\operatorname{GQ}}
\newcommand{\SRG}{\operatorname{SRG}}
\newcommand{\Span}{\operatorname{span}}
\newcommand{\BIBD}{\operatorname{BIBD}}
\newcommand{\abs}[1]{|{#1}|}
\newcommand{\set}[1]{\{{#1}\}}
\newcommand{\bigset}[1]{\bigl\{{#1}\bigr\}}
\newcommand{\norm}[1]{\|{#1}\|}
\newcommand{\gen}[1]{\langle{#1}\rangle}
\newcommand{\ip}[2]{\langle{#1},{#2}\rangle}
\newcommand{\alphi}{\renewcommand{\labelenumi}{(\alph{enumi})}}
\newcommand{\romani}{\renewcommand{\labelenumi}{(\roman{enumi})}}
\newtheorem{theorem}{Theorem}[section]
\newtheorem{lemma}[theorem]{Lemma}
\theoremstyle{definition}
\newtheorem{definition}[theorem]{Definition}
\newtheorem{example}[theorem]{Example}
\begin{document}
\begin{frontmatter}
\title{Polyphase equiangular tight frames and abelian generalized quadrangles}

\author[AFIT]{Matthew Fickus}
\ead{Matthew.Fickus@gmail.com}
\author[Cincy]{John Jasper}
\author[AFIT]{Dustin G.\ Mixon}
\author[AFIT]{Jesse D.\ Peterson}
\author[AFIT]{Cody E.\ Watson}

\address[AFIT]{Department of Mathematics and Statistics, Air Force Institute of Technology, Wright-Patterson AFB, OH 45433}
\address[Cincy]{Department of Mathematical Sciences, University of Cincinnati, Cincinnati, OH 45221}

\begin{abstract}
An equiangular tight frame (ETF) is a type of optimal packing of lines in a finite-dimensional Hilbert space.
ETFs arise in various applications, such as waveform design for wireless communication, compressed sensing, quantum information theory and algebraic coding theory.
In a recent paper, signature matrices of ETFs were constructed from abelian distance regular covers of complete graphs.
We extend this work, constructing ETF synthesis operators from abelian generalized quadrangles, and vice versa.
This produces a new infinite family of complex ETFs as well as a new proof of the existence of certain generalized quadrangles.
This work involves designing matrices whose entries are polynomials over a finite abelian group.
As such, it is related to the concept of a polyphase matrix of a finite filter bank.
\end{abstract}

\begin{keyword}
equiangular tight frame \sep generalized quadrangle \sep filter bank \MSC[2010] 42C15, 05E30
\end{keyword}
\end{frontmatter}

\section{Introduction}

An equiangular tight frame is a type of optimal packing of $n$ lines in a $d$-dimensional (real or complex) Hilbert space $\bbH_d$, where $n\geq d$.
To be precise, Welch~\cite{Welch74} gives the following lower bound on the \textit{coherence} of any sequence $\set{\bfphi_i}_{i=1}^n$ of $n$ nonzero equal-norm vectors in $\bbH_d$:
\begin{equation}
\label{equation.Welch bound}
\max_{i\neq j}\frac{\abs{\ip{\bfphi_i}{\bfphi_j}}}{\norm{\bfphi_i}\norm{\bfphi_j}}\geq\bigg[\frac{n-d}{d(n-1)}\biggr]^{\frac12}.
\end{equation}
It is well-known~\cite{StrohmerH03} that nonzero equal-norm vectors $\set{\bfphi_i}_{i=1}^n$ achieve equality in this \textit{Welch bound} if and only if they form an \textit{equiangular tight frame} (ETF) for $\bbH_d$, namely when there exist constants $w$ and $a>0$ such that
\begin{equation*}
\abs{\ip{\bfphi_i}{\bfphi_j}}
=w,\quad \forall i,j=1,\dotsc,n,\ i\neq j,
\qquad
a\bfx=\sum_{i=1}^{n}\ip{\bfphi_i}{\bfx}\bfphi_i,\quad \forall \bfx\in\bbH_d.
\end{equation*}

Having minimal coherence, ETFs arise in a number of applications, including waveform design for wireless communication~\cite{StrohmerH03}, compressed sensing~\cite{BajwaCM12,BandeiraFMW13}, quantum information theory~\cite{RenesBSC04,Zauner99} and algebraic coding theory~\cite{JasperMF14}.
They also seem to be rare, and only a few methods for constructing infinite families of them are known.
With the exception of orthonormal bases and regular simplices,
each of these methods involve some type of combinatorial design.

Real ETFs in particular are equivalent to a subclass of \textit{strongly regular graphs} (SRGs)~\cite{HolmesP04,Waldron09},
and this subject has a rich literature~\cite{Brouwer07,Brouwer15,CorneilM91}.
See~\cite{SustikTDH07} for necessary conditions on the $d$ and $n$ parameters of real ETFs.
Conference matrices, Hadamard matrices, Paley tournaments, and quadratic residues are interrelated,
and they lead to infinite families of ETFs whose \textit{redundancy} $\frac nd$ is either nearly or exactly $2$~\cite{StrohmerH03,HolmesP04,Renes07,Strohmer08}.
Other constructions allow $d$ and $n$ to be chosen independently, and almost arbitrarily, up to an order of magnitude.
For example, this flexibility is offered by the \textit{harmonic ETFs} of~\cite{StrohmerH03,XiaZG05,DingF07}, which are obtained by restricting the Fourier basis on a finite abelian group to a difference set for that group.
Another flexible construction is the \textit{Steiner ETFs} of~\cite{FickusMT12}, which arise from a tensor-like product of a simplex and the incidence matrix of a certain type of finite geometry.

To be clear, many of these ideas are rediscoveries or reimaginings of more classical results.
In particular, see~\cite{vanLintS66,Seidel76} for early studies of the relationship between real ETFs and SRGs, and see~\cite{Rankin56}, \cite{Turyn65} and \cite{GoethalsS70} for precursors of the Welch bound, harmonic ETFs and Steiner ETFs, respectively.
That said, much recent progress has been made.
For example, new infinite families of ETFs are given in~\cite{FickusJMP16,FickusMJ16}.
Some of the ETFs in~\cite{FickusJMP16} are real, and these have led to new SRGs.
Other new SRGs have recently arisen from a new relation~\cite{Yu14} between certain real ETFs and SRGs~\cite{FickusMJPW15}.
Until recently, the existence of real ETFs with $(d,n)$ parameters $(19,76)$ and $(20,96)$ were longstanding open problems.
Both have now been ruled out with computer-assisted arguments~\cite{AzarijaM15,AzarijaM16,Yu15}.
Notably,~\cite{FickusMJ16} shows how to construct complex ETFs of these sizes.

In this paper, we extend the results of yet another recent development in the field,
namely the construction of an ETF from an abelian \textit{distance regular cover of a complete graph} (DRACKN)~\cite{CoutinkhoGSZ16}.
Though DRACKNs have been studied for a long time~\cite{GodsilH92},
only a few explicit methods for constructing infinite families of them are known.
Moreover, only some of these families are known to possess the abelian structure that yields ETFs via the method of~\cite{CoutinkhoGSZ16}.
Also, as detailed in the next section, it is unclear whether any of the ETFs produced in~\cite{CoutinkhoGSZ16} are new: their $(d,n)$ parameters match those of other known ETFs, meaning they might arise by rearranging and/or phasing the frame vectors of those ETFs.
Nevertheless, this construction is remarkable since it has a partial converse:
for any prime $p$, $(n,p,c)$-DRACKNs are equivalent to ETFs whose Gram matrices contain only $p$th roots of unity~\cite{CoutinkhoGSZ16}.
In particular, for any prime $p$, a construction of \cite{BodmannPT09,BodmannE10} yields ETFs that are equivalent to $(p^2,p,p)$-DRACKNs.
More recently, \cite{FickusJMP16} used this equivalence to construct a new family of DRACKNs from a new family of ETFs.

In this paper, we discuss the connections between~\cite{CoutinkhoGSZ16} and certain incidence structures known as \textit{generalized quadrangles} (GQs).
See~\cite{Payne07} for an overview of GQs, and~\cite{PayneT09} for details.
It has long been known that every GQ produces a DRACKN~\cite{Brouwer84,GodsilH92}.
We show that a certain type of GQ produces an abelian DRACKN and thus an ETF.
As we shall see, each of these so-called \textit{abelian GQs} will be a cover of a \textit{balanced incomplete block design} (BIBD).
Paralleling~\cite{CoutinkhoGSZ16}, we show that under certain conditions, the existence of abelian GQs is equivalent to existence of certain types of ETFs.
Much of this work involves matrices whose entries lie in the group ring $\bbC\calG$ of some finite abelian group $\calG$.
We choose to follow the conventions of the wavelet literature and regard such matrices as \textit{polyphase matrices} whose entries are polynomials over $\calG$.
Such matrices are a standard tool for the analysis of filter banks~\cite{Vaidyanathan92},
in particular their frame properties~\cite{CvetkovicV98,BolcskeiHF98}.

In the next section, we present the background material on ETFs, polyphase matrices and DRACKNs that we will need in order to prove our main results.
In Section~3, we show how the incidence matrix of a BIBD can sometimes be phased so as to produce a matrix whose columns form an ETF for their span.
In Theorems~\ref{theorem.phased BIBD ETFs} and~\ref{theorem.phased BIBD necessary conditions} we provide various characterizations and necessary conditions on such phased BIBD ETFs.
This construction is novel:
unlike other known constructions of ETFs,
the vectors span a space of much smaller dimension than the ambient space.
In particular, in contrast to harmonic and Steiner ETFs,
it is easy to see that the vectors in a phased BIBD ETF are equiangular,
but difficult to see that they form a tight frame for their span.

From Section~4 onward, we focus on polyphase BIBD ETFs, which are polynomial-valued versions of phased BIBD ETFs.
As shown in Theorems~\ref{theorem.polyphase BIBD ETF} and~\ref{theorem.abelian GQ}, polyphase BIBD ETFs yield abelian DRACKNs and in a special case correspond to abelian GQs.
In Theorem~\ref{theorem.ETF implies GQ},
we further show that under certain conditions, the existence of certain types of ETFs implies the existence of abelian GQs.
We conclude in Section~5 with several constructions of abelian GQs.
In particular, for any prime power $q$, Theorem~\ref{theorem.(q^2,q,q) polyphase BIBD ETFs} gives a construction of an abelian $\GQ(q-1,q+1)$ while Theorem~\ref{theorem.(q^3+1,q+1,q+1) polyphase BIBD ETFs} shows that a construction attributed to Brouwer in~\cite{Godsil92} yields an abelian $\GQ(q,q^2)$.
This latter construction yields ETFs with parameters
\begin{equation}
\label{equation.new ETF parameters}
d=q(q^2-q+1),
\quad
n=q^3+1,
\quad
n-d=q^2-q+1.
\end{equation}
As detailed in Section~5,
when $q$ is odd, these ETFs are real and correspond to a known infinite family of SRGs constructed in~\cite{Godsil92};
when $q$ is even, these ETFs are complex and are verifiably new whenever $q-1$ is not an odd prime power, which occurs infinitely often.
We note that the redundancy $\frac{n}{n-d}$ of these ETFs is unbounded,
a property only known to be shared by some special families of harmonic ETFs~\cite{XiaZG05,DingF07} and Steiner ETFs~\cite{FickusMT12}, as well as by those ETFs arising from hyperovals in finite projective planes~\cite{FickusMJ16}.

\section{Background}

This paper leverages insights from three different areas of research, namely equiangular tight frames,
polyphase representations of filter banks, and distance regular graphs.
In this section, we discuss concepts and notation from each of these areas that we use later on.
The reader who is already familiar with these areas may advance to Section~3, where we begin to introduce new ideas.

\subsection{Equiangular tight frames and signature matrices}

Throughout, let $d\leq n$ be positive integers, let the field of scalars $\bbF$ be either $\bbR$ or $\bbC$, and let $\bbH_d$ be a $d$-dimensional subspace of $\bbF^m$.
The \textit{synthesis operator} of a finite sequence of vectors $\set{\bfphi_i}_{i=1}^{n}$ in $\bbH_d$ is $\bfPhi:\bbF^n\rightarrow\bbH_d$, $\bfPhi\bfy:=\sum_{i=1}^{n}\bfy(i)\bfphi_i$.
Since $\bbH_d$ is a subspace of $\bbF^m$, this operator can be regarded as the $m\times n$ matrix whose $i$th column is $\bfphi_i$.
The adjoint of $\bfPhi$ is the $n\times m$ \textit{analysis operator} $\bfPhi^*:\bbF^m\rightarrow\bbF^n$, $(\bfPhi^*\bfx)(n)=\ip{\bfphi_n}{\bfx}$,
where the inner product on $\bbF^m$ is taken to be the standard dot product, chosen to be conjugate-linear in its first argument.
Composing these two operators gives the $m\times m$ \textit{frame operator} $\bfPhi\bfPhi^*:\bbH_d\rightarrow\bbH_d$,
$\bfPhi\bfPhi^*\bfx=\sum_{i=1}^{n}\ip{\bfphi_i}{\bfx}\bfphi_i$ as well as the $n\times n$ \textit{Gram matrix} $\bfPhi^*\bfPhi$ whose $(i,j)$th entry is $\ip{\bfphi_i}{\bfphi_j}$.

Such a sequence of vectors $\set{\bfphi_i}_{i=1}^{n}$ is a \textit{tight frame} for $\bbH_d$ if there exists $a>0$ such that
$\bfPhi\bfPhi^*\bfx=a\bfx$ for all $\bfx\in\bbH_d$.
When $a$ is specified, it is an \textit{$a$-tight frame}.
It is \textit{equal norm} if there exists $r>0$ such that $\norm{\bfphi_i}^2=r$ for all $i$,
and is \textit{equiangular} if it is equal norm and there exists $w$ such that $\abs{\ip{\bfphi_i}{\bfphi_j}}=w$ for all $i\neq j$.
When $\set{\bfphi_i}_{i=1}^{n}$ is both equiangular and a tight frame, it is an \textit{equiangular tight frame} (ETF).

As mentioned in the introduction, the ETFs we construct in this paper are unusual in that their synthesis operators $\bfPhi$ are most naturally represented as $m\times n$ matrices that are ``tall, skinny" and rank-deficient, that is, $d<n<m$.
The equiangularity of these vectors will follow immediately from the particular manner in which they are constructed.
However, it will be far from obvious that they form a tight frame for their span.
Here, our main tool will be the following result from~\cite{FickusMJ16}:

\begin{lemma}[Lemma~1 of~\cite{FickusMJ16}]
\label{lemma.tight for subspace}
For any vectors $\set{\bfphi_i}_{i=1}^{n}$ in $\bbF^m$, let $\bfPhi$ be the $m\times n$ matrix whose $i$th column is $\bfphi_i$ for all $i$.
For any $a>0$,
the following are equivalent:
\begin{enumerate}
\romani
\item
$\set{\bfphi_i}_{i=1}^{n}$ forms an $a$-tight frame for its span,
\item
$\bfPhi\bfPhi^*\bfPhi=a\bfPhi$,
\item
$(\bfPhi\bfPhi^*)^2=a\bfPhi\bfPhi^*$,
\item
$(\bfPhi^*\bfPhi)^2=a\bfPhi^*\bfPhi$.
\end{enumerate}

Also, if $\set{\bfphi_i}_{i=1}^{n}$ is contained in a $d$-dimensional subspace $\bbH_d$ of $\bbF^m$, then it forms an $a$-tight frame for $\bbH_d$ if and only if  $\bfPhi\bfPhi^*=a\bfPi$ where $\bfPi$ is the $m\times m$ orthogonal projection matrix onto $\bbH_d$.
As such, $\set{\bfphi_i}_{i=1}^{n}$ forms an ETF for its span if and only if (ii)--(iv) hold and $\set{\bfphi_i}_{i=1}^{n}$ is equiangular.
In this case, letting $r=\norm{\bfphi_i}^2$, the dimension $d$ can be computed from either the tight frame constant or the equiangularity constant:
\begin{equation}
\label{equation.ETF parameter relations}
a=\frac{rn}d,
\quad
w=r\biggl[\frac{n-d}{d(n-1)}\biggr]^{\frac12}.
\end{equation}

Alternatively, equal norm vectors $\set{\bfphi_i}_{i=1}^{n}$ in a subspace $\bbH_d$ of $\bbF^m$ of dimension $d$ form an ETF for $\bbH_d$ if and only if they achieve equality in~\eqref{equation.Welch bound}.
\end{lemma}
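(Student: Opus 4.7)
The plan splits the Lemma into two tiers: first the four-way equivalence of (i)--(iv), then the subspace, parameter, and Welch-bound statements as consequences. The equivalence (i) $\Leftrightarrow$ (ii) needs no real argument beyond identifying $\Span\{\bfphi_i\}_{i=1}^n$ with the range $\bfPhi(\bbF^n)$: condition (i) is literally condition (ii) evaluated at an arbitrary $\bfy \in \bbF^n$. The implications (ii) $\Rightarrow$ (iii) and (ii) $\Rightarrow$ (iv) follow by right- or left-multiplication by $\bfPhi^*$, respectively.

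The main technical hurdle I expect is the reverse direction: deducing (ii) from (iii), and symmetrically from (iv). My plan is a positive-semidefinite trick. Set $\bfA := \bfPhi\bfPhi^*\bfPhi - a\bfPhi$ and expand
\begin{equation*}
\bfA\bfA^* = (\bfPhi\bfPhi^*)^3 - 2a(\bfPhi\bfPhi^*)^2 + a^2\bfPhi\bfPhi^*.
\end{equation*}
Under (iii), $(\bfPhi\bfPhi^*)^2 = a\bfPhi\bfPhi^*$ and hence $(\bfPhi\bfPhi^*)^3 = a^2\bfPhi\bfPhi^*$, so the right-hand side collapses to $\bfzero$. Since $\bfA\bfA^*$ is positive-semidefinite, $\bfA$ itself must vanish, giving (ii). The direction (iv) $\Rightarrow$ (ii) is parallel, working with $\bfA^*\bfA$ in place of $\bfA\bfA^*$.

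Having these equivalences, the $\bfPi$ identity follows quickly: containment of every $\bfphi_i$ in $\bbH_d$ makes $\bfPhi^*$ annihilate $\bbH_d^\perp$, so $\bfPhi\bfPhi^*$ does as well; combined with $\bfPhi\bfPhi^*|_{\bbH_d} = a\bfI_{\bbH_d}$ from (i), this forces $\bfPhi\bfPhi^* = a\bfPi$, and the converse is immediate. The parameter formulas in~\eqref{equation.ETF parameter relations} then come out of two trace computations: $\Tr(\bfPhi\bfPhi^*) = \sum_i\norm{\bfphi_i}^2 = rn$ must equal $\Tr(a\bfPi) = ad$, giving $a = rn/d$; and (iv) yields $\Tr((\bfPhi^*\bfPhi)^2) = a\Tr(\bfPhi^*\bfPhi) = r^2n^2/d$, while a direct expansion of the same trace gives $nr^2 + n(n-1)w^2$, so solving produces the stated formula for $w$.

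Finally, the Welch-bound characterization is the classical rank-plus-Cauchy--Schwarz argument: for any equal-norm sequence whose span has dimension at most $d$, the inequality $(\Tr(\bfPhi^*\bfPhi))^2 \leq d\,\Tr((\bfPhi^*\bfPhi)^2)$ combined with the split of $\Tr((\bfPhi^*\bfPhi)^2)$ into diagonal and off-diagonal parts recovers~\eqref{equation.Welch bound}; equality in the Cauchy--Schwarz step forces $\bfPhi\bfPhi^*$ to be a scalar multiple of a rank-$d$ projection (i.e.\ tightness), while equality among the off-diagonal magnitudes forces equiangularity. Aside from the positive-semidefinite trick needed for (iii), (iv) $\Rightarrow$ (ii), everything else reduces to elementary bookkeeping with traces, ranges, and inner products.
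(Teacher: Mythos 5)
Your proposal is correct, but note that this paper does not actually prove the lemma: it is imported verbatim as Lemma~1 of~\cite{FickusMJ16}, so there is no in-paper proof to compare against. Your argument is a sound, self-contained reconstruction of the standard one. The only step with any content is (iii),(iv) $\Rightarrow$ (ii), and your positive-semidefinite trick handles it cleanly: with $\bfA=\bfPhi\bfPhi^*\bfPhi-a\bfPhi$ one indeed gets $\bfA\bfA^*=(\bfPhi\bfPhi^*)^3-2a(\bfPhi\bfPhi^*)^2+a^2\bfPhi\bfPhi^*=\bfzero$ under (iii), and $\Tr(\bfA\bfA^*)=0$ forces $\bfA=\bfzero$. (An equivalent route, common in the literature, is spectral: (iii) says every eigenvalue of $\bfPhi\bfPhi^*$ is $0$ or $a$, so $\bfPhi\bfPhi^*$ acts as $a$ times the identity on its range, which is the span of the $\bfphi_i$; your trace argument avoids diagonalization entirely and works uniformly over $\bbR$ and $\bbC$.) The trace computations for~\eqref{equation.ETF parameter relations} and the rank-plus-Cauchy--Schwarz derivation of the Welch-bound equivalence are the standard ones, and your observation that equality in the eigenvalue Cauchy--Schwarz step forces $\bfPhi\bfPhi^*$ to be $a$ times a rank-$d$ projection correctly ties tightness to the equality case. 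One small point worth making explicit in a final write-up: in the $\bfPi$ identity, the hypothesis $a>0$ is what guarantees the vectors actually span $\bbH_d$ (the range of $\bfPhi\bfPhi^*=a\bfPi$ is all of $\bbH_d$), so ``tight frame for $\bbH_d$'' rather than merely for a proper subspace is justified.
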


One consequence of this lemma is that if the columns of $\bfPhi$ form an ETF for their span, then so do the columns of $\bfPhi^*\bfPhi$.
Indeed, it is well known that in this case the columns of $\bfPhi^*\bfPhi$ are simply an alternative representation of the columns of $\bfPhi$.
Conversely, if $\bfG$ is any self-adjoint matrix whose diagonal entries are some constant $r$, whose off-diagonal entries have constant modulus $w$, and which satisfies $\bfG^2=a\bfG$ for some $a>0$, then the columns of $\bfG$ form an ETF for their span.
This fact is also well known, and can be proven by taking the singular value decomposition of $\bfG$, or alternatively, taking $\bfPhi$ to be $\bfG$ in Lemma~\ref{lemma.tight for subspace}.

Any ETF has an associated \textit{signature matrix}, that is, a self-adjoint matrix $\bfS$ whose diagonal entries are zero and whose off-diagonal entries are unimodular.
In particular, if $\bfG$ is the Gram matrix of an ETF then $\bfS:=\frac1w(\bfG-r\bfI)$ is a signature matrix which (since $\bfG^2=a\bfG$) satisfies a quadratic equation:
\begin{equation}
\label{equation.signature matrix equation}
\bfS^2
=\frac{a-2r}{w}\bfS+\frac{r(a-r)}{w^2}\bfI
=\frac{(\tfrac nd-2)\sqrt{n-1}}{\sqrt{\tfrac nd-1}}\bfS+(n-1)\bfI.
\end{equation}
Here, the coefficient $\delta=\frac{a-2r}{w}$ of $\bfS$ determines the dimension $d$ of the span of the ETF's vectors:
\begin{equation}
\label{equation.dimension from signature}
\delta=\frac{(\tfrac nd-2)\sqrt{n-1}}{\sqrt{\tfrac nd-1}}
\qquad\Longleftrightarrow\qquad
d
=\frac n2\biggl[1-\frac{\delta}{\sqrt{\delta^2+4(n-1)}}\biggr].
\end{equation}
Conversely, if $\bfS$ is any signature matrix that satisfies $\bfS^2=\delta\bfS+(n-1)\bfI$ for some $\delta$,
then defining $d$ according to~\eqref{equation.dimension from signature} and then defining $a$ and $w$ from~\eqref{equation.ETF parameter relations} for an arbitrary choice of $r>0$,
the matrix $\bfG:=r\bfI+w\bfS$ satisfies $\bfG^2=a\bfG$ and so is the Gram matrix of an $n$-vector ETF for some $d$-dimensional Hilbert space.

Thus, ETFs are equivalent to signature matrices which satisfy quadratic equations.
One immediate consequence of this equivalence is that if there exists an $n$-vector ETF for a space of dimension $d$ then there also exists an $n$-vector ETF for a space of dimension $n-d$: if $\bfS^2=\delta\bfS+(n-1)\bfI$ then $(-\bfS)^2=-\delta(-\bfS)+(n-1)\bfI$, and the $d$ parameters~\eqref{equation.dimension from signature} arising from $\delta$ and $-\delta$ clearly sum to $n$.
Two ETFs whose signature matrices are the negatives of each other are called \textit{Naimark complements}.

\subsection{Convolution algebras over finite abelian groups and polyphase matrices}

For any finite abelian group $\calG$, the \textit{convolution algebra} over $\calG$ is the group ring $\bbC\calG$,
namely the vector space $\bbC^\calG:=\set{\bfx:\calG\rightarrow\bbC}$ of all $\calG$-indexed complex vectors equipped with the \textit{convolution} product
\smash{$(\bfx_1*\bfx_2)(g):=\sum_{g'\in\calG}\bfx_1(g')\bfx_2(g-g')$}.
This product naturally arises from regarding each $\bfx\in\bbC\calG$ as a formal polynomial over $\calG$,
namely as its \textit{$z$-transform} $\bfx(z):=\sum_{g\in\calG}\bfx(g)z^g$:
\begin{equation*}
\sum_{g\in\calG}\bfx_1(g)z^g\sum_{g\in\calG}\bfx_2(g)z^g
:=\sum_{g\in\calG}(\bfx_1*\bfx_2)(g)z^g
=\sum_{g_1\in\calG}\sum_{g_2\in\calG}\bfx_1(g_1)\bfx_2(g_2)z^{g_1+g_2}.
\end{equation*}
Under this product, $\bbC\calG$ is a commutative ring with multiplicative identity $\bfdelta_0$, where $\set{\bfdelta_g}_{g\in\calG}$ denotes the standard basis in $\bbC^{\calG}$.
This multiplicative identity has $z$-transform $1:=z^0=\bfdelta_0(z)$.

To more explicitly justify this polynomial notation,
note that since $\calG$ is a finite abelian group,
there exists a group isomorphism of the form \smash{$\eta:\calG\rightarrow\oplus_{i=1}^{j}\bbZ_{q_i}$} for some positive integers $\set{q_i}_{i=1}^{j}$.
This then implies \smash{$\sum_{g\in\calG}\bfx(g)z^g\mapsto\sum_{g\in\calG}\bfx(g)\prod_{i=1}^{j}z_i^{\eta(g)_i}$}
is an isomorphism from $\bbC\calG$ into the algebra of complex polynomials in $j$ variables $\bbC[z_1,\dotsc,z_j]$ modulo the ideal generated by \smash{$\set{z^{q_i}-1}_{i=1}^{j}$}.
That is, we can identify $z^g$ with $(z_1,\dotsc,z_j)^{(g_1,\dotsc,g_j)}:=(z_1^{g_1},\dotsc,z_j^{g_j})$
where $z_i^{q_i}\equiv 1$ for each $i=1,\dotsc,j$.

For any $\bfx\in\bbC\calG$, the corresponding \textit{filter} is the linear operator $\bfL:\bbC\calG\rightarrow\bbC\calG$, $\bfL\bfy=\bfx*\bfy$.
That is, $\bfL=\sum_{g\in\calG}\bfx(g)\bfT^g$,
where for each $g\in\calG$, $\bfT^g$ is the \textit{translation} operator
$\bfT^g:\bbC\calG\rightarrow\bbC\calG$, $(\bfT^g\bfy)(g'):=\bfy(g-g')$.
Note that writing elements of $\bbC\calG$ as $z$-transforms, we see the mapping
\begin{equation}
\label{equation.definition of x(T)}
\bfx(z)=\sum_{g\in\calG}\bfx(g)z^g
\mapsto\bfx(\bfT):=\sum_{g\in\calG}\bfx(g)\bfT^g,
\end{equation}
is an isomorphism from $\bbC\calG$ onto the algebra of all such filters.
In particular, identifying $\calG$ with $\oplus_{i=1}^{j}\bbZ_{q_i}$,
this mapping sends $z^g=(z_1^{g_1},\dotsc,z_j^{g_j})$ to $\bfT^g =\bfT_1^{g_1}\otimes\dotsb\otimes\bfT_j^{g_j}$ where, for each $i=1,\dotsc,j$, $\bfT_i$ is a circulant permutation matrix of order $q_i$.
Since translation operators are unitary, the adjoint of $\bfx(\bfT)$ is $[\bfx(\bfT)]^*=\tilde{\bfx}(\bfT)$ where $\tilde{\bfx}(g):=\overline{\bfx(-g)}$ is the \textit{involution} of $\bfx$.
In light of~\eqref{equation.definition of x(T)}, this suggests we define the \textit{conjugate} of any $z$-transform $\bfx(z)$ as $\overline{\bfx(z)}:=\tilde{\bfx}(z)$.

We often identify a linear operator $\bfL:\bbC^{\calG}\rightarrow\bbC^{\calG}$ with the matrix in $\bbC^{\calG\times\calG}:=\set{\bfL:\calG\times\calG\rightarrow\bbC}$ that represents it with respect to the standard basis.
That is, we let $\bfL(g',g''):=\ip{\bfdelta_{g'}}{\bfL\bfdelta_{g''}}$ for all $g',g''\in\calG$, where this inner product is the complex dot product over $\bbC^\calG$, taken to be conjugate-linear in its first argument.
Under this identification, each translation operator is a permutation matrix,
and a matrix $\bfL\in\bbC^{\calG\times\calG}$ is a filter if and only if it is \textit{circulant}, namely when $\bfL(g',g'')=\bfL(g'-g'',0)$ for all $g',g''\in\calG$.

A \textit{character} of $\calG$ is a homomorphism $\gamma:\calG\rightarrow\bbT:=\set{z\in\bbC: \abs{z}=1}$.
Any given character naturally extends to the linear functional $\gamma:\bbC\calG\rightarrow\bbC$,
$\gamma(\bfx):=\sum_{g\in\calG}\bfx(g)\gamma(g)$.
Each extended character $\gamma$ satisfies $\gamma(\bfx_1*\bfx_2)=\gamma(\bfx_1)\gamma(\bfx_2)$ for all $\bfx_1,\bfx_2\in\bbC\calG$ and so is a ring homomorphism from $\bbC\calG$ to $\bbC$.
Regarding each $\bfx\in\bbC\calG$ as a polynomial $\bfx(z)$,
we have $\gamma(\bfx_1(z)\bfx_2(z))=\gamma(\bfx_1(z))\gamma(\bfx_2(z))$.
To improve this clunky notation, let $\bfx(\gamma):=\gamma(\bfx)$ be the ``evaluation" of $\bfx(z)$ at $\gamma$.
Under this notation, the homomorphism from $\bbC\calG$ to $\bbC$ given by $\gamma$ becomes
\begin{equation}
\label{equation.definition of x(gamma)}
\bfx(z)=\sum_{g\in\calG}\bfx(g)z^g
\mapsto\bfx(\gamma):=\sum_{g\in\calG}\bfx(g)\gamma(g).
\end{equation}
Writing $\calG\cong\oplus_{i=1}^{j}\bbZ_{q_i}$ and regarding $\bfx(z)$ as a member of $\bbC[z_1,\dotsc,z_j]/\gen{\set{z^{q_i}-1}_{i=1}^{j}}$,
$\bfx(\gamma)$ indeed corresponds to evaluating $\bfx(z)$ at $(z_1,\dotsc,z_j)$ where each $z_i$ is a $q_i$th root of unity obtained by applying $\gamma$ to some fixed generator of $\bbZ_{q_i}$, regarded as a subgroup of $\calG$.
This notation is also consistent with conjugation:
\smash{$\overline{\gamma(g)}=\gamma(-g)$} for all $g\in\calG$, and so the complex conjugate of the scalar $\bfx(\gamma)$ is \smash{$\overline{\bfx(\gamma)}=\tilde{\bfx}(\gamma)$},
namely the evaluation of the conjugate polynomial $\overline{\bfx(z)}=\tilde{\bfx}(z)$ at $\gamma$.

The set $\Gamma$ of all characters of $\calG$ is itself an abelian group under pointwise multiplication.
In fact, writing $\calG\cong\oplus_{i=1}^{j}\bbZ_{q_i}$ and identifying each $\gamma\in\Gamma$ with a unique $j$-tuple of $q_i$th roots of unity, we see that $\Gamma\cong\calG$.
Regarded as vectors in $\bbC^\calG$, the characters form an orthogonal basis for $\bbC^\calG$.
Since $\ip{\gamma}{\bfx}=\bfx(\gamma^{-1})$, this implies that $\bfx_1(z)=\bfx_2(z)$ if and only if $\bfx_1(\gamma)=\bfx_2(\gamma)$ for all $\gamma$.
The mapping \smash{$\bfx\mapsto\set{\bfx(\gamma^{-1})}_{\gamma\in\Gamma}$} is the \textit{discrete Fourier transform} over $\calG$.

Much of our work here involves matrices $\bfPhi$ whose entries lie in $\bbC\calG$.
When regarding these entries as polynomials, we denote such a matrix as $\bfPhi(z)$ and, following the wavelets literature, call it a \textit{polyphase} matrix.
Applying~\eqref{equation.definition of x(T)} to each entry of $\bfPhi(z)$ yields a block matrix with circulant blocks, namely the corresponding \textit{filter bank} $\bfPhi(\bfT)$.
Meanwhile, applying~\eqref{equation.definition of x(gamma)} to each entry of $\bfPhi(z)$ yields the scalar matrix $\bfPhi(\gamma)$.

Two polyphase matrices $\bfPhi(z)$ and $\bfPsi(z)$ can be summed or multiplied in the usual way, provided they are of the appropriate size and their entries lie in the same group ring $\bbC\calG$.
Since~\eqref{equation.definition of x(T)} is a ring isomorphism, these correspond to usual sums and products of $\bfPhi(\bfT)$ and $\bfPsi(\bfT)$ as block matrices.
Similarly, the fact that~\eqref{equation.definition of x(gamma)} is a ring homomorphism implies that evaluating a sum or product of $\bfPhi(z)$ and $\bfPsi(z)$ at $\gamma$ is equivalent to just summing or multiplying the scalar matrices $\bfPhi(\gamma)$ and $\bfPsi(\gamma)$.
Moreover, since~\eqref{equation.definition of x(T)} and~\eqref{equation.definition of x(gamma)} preserve conjugation,
$[\bfPhi(\bfT)]^*=\bfPhi^*(\bfT)$ and $[\bfPhi(\gamma)]^*=\bfPhi^*(\gamma)$ provided the $(j,i)$th entry of $\bfPhi^*(z)$ is defined as the conjugate (involution) of the $(i,j)$th entry of $\bfPhi(z)$.
Further note that since the characters form an orthonormal basis for $\bbC^\calG$,
$\bfPhi(z)=\bfPsi(z)$ if and only if $\bfPhi(\gamma)=\bfPsi(\gamma)$ for all $\gamma\in\Gamma$.

These ideas give elegant proofs for some of the fundamental results of discrete wavelet transforms.
For example, a square filter bank matrix $\bfPhi(\bfT)$ is unitary if and only if $\bfPhi(\bfT)[\bfPhi(\bfT)]^*=\bfI$,
namely when its polyphase matrix satisfies $\bfPhi(z)\bfPhi^*(z)=\bfI$,
namely when $\bfPhi(\gamma)[\bfPhi(\gamma)]^*=\bfI$ for all $\gamma$.
The advantage here is that $\bfPhi(\gamma)$ is substantially smaller than $\bfPhi(\bfT)$, and thus easier to design and analyze.
In traditional wavelets, for example, $\bfPhi(\bfT)$ is a $2\times 2$ array of circulant blocks,
meaning $\bfPhi(\gamma)$ is just a $2\times 2$ scalar matrix for each $\gamma\in\Gamma$.
As we now discuss, a similar application of these ideas was recently used to construct ETFs.

\subsection{Strongly regular graphs and abelian distance regular antipodal covers of complete graphs}

Since ETF signature matrices satisfy \eqref{equation.signature matrix equation},
it is not surprising that they are related to graphs whose adjacency matrices satisfy certain quadratic-like equations.
For instance, an $\SRG(v,k,\lambda,\mu)$ is a graph with $v$ vertices, each having $k$ neighbors, with the property that any two neighbors have $\lambda$ neighbors in common, while any two nonadjacent vertices have $\mu$ neighbors in common.
That is, a graph is strongly regular if and only if its adjacency matrix $\bfA$ satisfies
\begin{equation}
\label{equation.definition of SRG}
\bfA^2=(\lambda-\mu)\bfA+(k-\mu)\bfI+\mu\bfJ,
\quad
\text{i.e.}
\quad
\bfA^2(i,j)
=\left\{\begin{array}{cl}k,&i=j,\\\lambda,&i\neq j,\ \bfA(i,j)=1,\\\mu,&i\neq j,\ \bfA(i,j)=0,\end{array}\right.
\end{equation}
where $\bfJ$ is an all-ones matrix.
An SRG's parameters are interrelated: conjugating~\eqref{equation.definition of SRG} by an all-ones vector $\bfone$ gives $k(k-\lambda-1)=(v-k-1)\mu$.
It is well-known that real $n$-vector ETFs are equivalent to SRGs on $v=n-1$ vertices where $k=2\mu$;
see~\cite{Waldron09,FickusW15} for details.
Recently, it was also shown that real ETFs whose Gram matrices have $\bfone$ as an eigenvector are equivalent to SRGs on $v=n$ vertices where $v=4k-2\lambda-2\mu$~\cite{FickusMJPW15}.
Every SRG is \textit{distance regular}, meaning that the number of vertices at given distances from vertex $i$ and $j$ depends only on those distances and the distance between $i$ and $j$.

Recently, a second type of distance regular graph was used to construct ETFs~\cite{CoutinkhoGSZ16},
namely a \textit{distance regular antipodal cover of a complete graph} (DRACKN).
To elaborate, let $\rmK_n$ denote the complete graph on $n$ vertices.
For a given integer $f\geq 2$, an \textit{$f$-fold cover of $\rmK_n$} is a graph whose vertex set can be partitioned into $n$ subsets of size $f$, dubbed \textit{fibers}, so that no two vertices in the same fiber are adjacent and so that there exists a perfect matching between any two distinct fibers~\cite{CoutinkhoGSZ16}.
That is, a graph is an $f$-fold cover of $\rmK_n$ if it has an $nf\times nf$ adjacency matrix $\bfA$ which is an $n\times n$ array of blocks of size $f\times f$ whose $(i,j)$th block $\bfA(i,j)$ is zero when $i=j$ and is a permutation matrix when $i\neq j$.

Now suppose that a given $f$-fold cover of $\rmK_n$ has the property that any two nonadjacent vertices from distinct fibers have exactly $c$ neighbors in common, where $c$ is some positive constant.
By Lemma~3.1 of~\cite{GodsilH92}, any such graph has diameter $3$ and is \textit{antipodal}:
fibers are equivalence classes, provided two vertices are equated when the distance between them is three (or zero).
This same lemma shows that any such graph is distance regular.
In fact, its proof shows that any two adjacent vertices have $(n-1)-1-(f-1)c$ neighbors in common.
Altogether, these facts imply the number of two-step paths from the $x$th vertex of the $i$th fiber to the $y$th vertex of the $j$th fiber is
\begin{equation*}
\bfA^2(i,j;x,y)
=\left\{\begin{array}{cl}
n-1,            &i=j, x=y,\\
0,              &i=j, x\neq y,\\
c,              &i\neq j, \bfA(i,j;x,y)=0,\\
(n-1)-1-(f-1)c, &i\neq j, \bfA(i,j;x,y)=1,\end{array}\right.
\end{equation*}
that is,
\begin{equation}
\label{equation.DRACKN quadratic condition}
\bfA^2=(n-fc-2)\bfA+(n-1)\bfI_{n}\otimes\bfI_f+c(\bfJ_n-\bfI_n)\otimes\bfJ_f.
\end{equation}
The first component of these tensor products refers to the base and the second one to the fiber.

For positive integers $n$, $f$ and $c$ with $f\geq2$,
a graph is said to be an $(n,f,c)$-DRACKN if it has an $(n\times n)\times(f\times f)$ block adjacency matrix $\bfA$ whose diagonal blocks are zero, whose off-diagonal blocks are permutations, and which satisfies~\eqref{equation.DRACKN quadratic condition}.
See~\cite{GodsilH92} for a thorough discussion of various types of DRACKNs.
Our work here focuses on $(n,f,c)$-DRACKNs that happen to be \textit{abelian},
namely when the off-diagonal blocks of $\bfA$ all arise as translation operators over some abelian group $\calG$ of order $f$.
As discussed in Section~7 of~\cite{GodsilH92},
this means each off-diagonal block of $\bfA$ may be regarded as a standard basis element of the group ring $\bbC\calG$.
Under this identification, quantities $\bfI_n$ and $\bfJ_n$ that appear in~\eqref{equation.DRACKN quadratic condition} become $\bfdelta_0$ and $\sum_{g\in\calG}\bfdelta_g$, respectively.
Writing these as formal polynomials of $\calG$ leads to our following restatement of their definition:

\begin{definition}
\label{definition.abelian DRACKN}
For positive integers $n$, $f$ and $c$ with $f\geq2$ and an abelian group $\calG$ of order $f$,
an \textit{abelian $(n,f,c)$-DRACKN} is a self-adjoint $n\times n$ polyphase matrix $\bfA(z)$ whose diagonal entries are zero, whose off-diagonal entries are monomials ($z$-transforms of standard basis elements) over $\calG$, and which satisfies
\begin{equation}
\label{equation.DRACKN property}
[\bfA(z)]^2=(n-fc-2)\bfA(z)+(n-1)\bfI+c\bfone(z)(\bfJ-\bfI),
\end{equation}
where $\bfone(z):=\sum_{g\in\calG}z^g$ is the geometric sum over $\calG$.
\end{definition}

To be clear, the entries of all matrices here lie in the ring $\bbC\calG$.
In particular, the diagonal entries of $\bfI$ and the off-diagonal entries of $\bfJ-\bfI$ are the multiplicative identity in $\bbC\calG$, namely $1:=\bfdelta_0(z)$.

As indicated by Theorems~3.2 and~4.1 of~\cite{CoutinkhoGSZ16}, abelian DRACKNs are intimately related to ETFs.
To elaborate, let $\bfA(z)$ be any $n\times n$ self-adjoint polyphase matrix whose diagonal entries are zero and whose off-diagonal entries are monomials over some abelian group $\calG$ of order $f\geq 2$.
If $\bfA(z)$ is an abelian $(n,f,c)$-DRACKN for some positive integer $c$, then evaluating \eqref{equation.DRACKN property} at any nontrivial character $\gamma$ of $\calG$ gives $[\bfA(\gamma)]^2=(n-fc-2)\bfA(\gamma)+(n-1)\bfI$,
implying that $\bfA(\gamma)$ is the signature matrix of an ETF for a space whose dimension $d$ is given by~\eqref{equation.dimension from signature} where $\delta=n-fc-2$.
Conversely, if $\bfA(\gamma)$ is such a signature matrix for every nontrivial character $\gamma$ of $\calG$, then since the only members of $\bbC\calG$ that vanish at every nontrivial character are scalar multiples of $\bfone(z)$,
there necessarily exists some $n\times n$ scalar-valued matrix $\bfB$ such that
$[\bfA(z)]^2=(n-fc-2)\bfA(z)+(n-1)\bfI+\bfone(z)\bfB$.
In this case, evaluating this equation at the trivial character gives
\begin{equation*}
(n-2)\bfJ+\bfI
=(\bfJ-\bfI)^2
=(n-fc-2)(\bfJ-\bfI)+(n-1)\bfI+f\bfB,
\end{equation*}
namely that $\bfB=c(\bfJ-\bfI)$, meaning $\bfA(z)$ satisfies~\eqref{equation.DRACKN property} and so is an abelian $(n,f,c)$-DRACKN.
We summarize these facts as follows:

\begin{lemma}
\label{lemma.DRACKN implies ETF}
For positive integers $n$, $f$ and $c$ with $f\geq2$ and an abelian group $\calG$ of order $f$,
let $\bfA(z)$ be an $n\times n$ self-adjoint polyphase matrix whose diagonal entries are zero and whose off-diagonal entries are monomials.  Then $\bfA(z)$ is an abelian $(n,f,c)$-DRACKN if and only if $\bfA(\gamma)$ is the signature matrix of an $n$-vector ETF for a space of dimension
\begin{equation}
\label{equation.dimension of abelian DRACKN ETF}
d=\frac n2\biggl[1-\frac{\delta}{\sqrt{\delta^2+4(n-1)}}\biggr],
\quad
\delta=n-fc-2,
\end{equation}
for all nontrivial characters $\gamma$ of $\calG$.
\end{lemma}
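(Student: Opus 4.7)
The plan is to move freely between the polyphase matrix $\bfA(z)$ and its scalar evaluations $\set{\bfA(\gamma)}_{\gamma\in\Gamma}$ at the characters of $\calG$, exploiting the fact that $\bfx(z)\mapsto\bfx(\gamma)$ is a ring homomorphism that preserves conjugation. Matrix identities in $\bbC\calG$ therefore convert entry-by-entry to scalar identities, and conversely scalar identities holding for every character can be reassembled by Fourier inversion on $\calG$. Since the paper has already established that ETF signature matrices are exactly the self-adjoint matrices with zero diagonal and unimodular off-diagonal entries satisfying a quadratic identity of the form $\bfS^2 = \delta\bfS + (n-1)\bfI$, with $d$ determined by $\delta$ via~\eqref{equation.dimension from signature}, both directions of the lemma reduce to matching~\eqref{equation.DRACKN property} with this signature-matrix equation for $\delta = n-fc-2$.

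For the forward direction, I would note that $\bfA(\gamma)$ inherits self-adjointness and zero diagonal from $\bfA(z)$, while its off-diagonal entries, being evaluations of monomials $z^g$, are the unit-modulus scalars $\gamma(g)\in\bbT$. Nontriviality of $\gamma$ forces $\bfone(\gamma) = \sum_{g\in\calG}\gamma(g) = 0$, so evaluating~\eqref{equation.DRACKN property} at $\gamma$ kills the $\bfone(z)(\bfJ-\bfI)$ term and yields $[\bfA(\gamma)]^2 = (n-fc-2)\bfA(\gamma) + (n-1)\bfI$. This is precisely the quadratic identity characterizing an ETF signature matrix with $\delta = n-fc-2$, so $\bfA(\gamma)$ is the signature matrix of an $n$-vector ETF whose span has the dimension prescribed by~\eqref{equation.dimension of abelian DRACKN ETF}.

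For the converse, I would form the polyphase matrix
\begin{equation*}
\bfX(z) := [\bfA(z)]^2 - (n-fc-2)\bfA(z) - (n-1)\bfI
\end{equation*}
and observe that the hypothesis gives $\bfX(\gamma) = \bfzero$ for every nontrivial $\gamma\in\Gamma$. Since the characters form an orthogonal basis of $\bbC^\calG$, the only members of $\bbC\calG$ vanishing at every nontrivial character are the scalar multiples of $\bfone(z)$; applied entry-by-entry, this yields $\bfX(z) = \bfone(z)\bfB$ for some scalar matrix $\bfB$. To identify $\bfB$, I would evaluate at the trivial character $\gamma_0$: here each monomial $z^g$ becomes $1$, so $\bfA(\gamma_0) = \bfJ - \bfI$ and $\bfone(\gamma_0) = f$, while $(\bfJ-\bfI)^2 = (n-2)\bfJ + \bfI$. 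A short algebraic collapse gives $f\bfB = fc(\bfJ-\bfI)$, hence $\bfB = c(\bfJ - \bfI)$, and~\eqref{equation.DRACKN property} follows.

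The argument is essentially conceptual rather than computational: the main point is the dictionary between the polynomial and character-evaluation pictures, together with the Fourier-inversion fact that elements of $\bbC\calG$ vanishing at every nontrivial character are multiples of $\bfone(z)$. With that machinery in hand, neither implication presents a genuine obstacle, and the only real bookkeeping is the short expansion at $\gamma_0$ that recovers $c(\bfJ-\bfI)$ in the converse.
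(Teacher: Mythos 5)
Your proposal is correct and follows essentially the same route as the paper: evaluate \eqref{equation.DRACKN property} at nontrivial characters using $\bfone(\gamma)=0$ for the forward direction, and for the converse use that elements of $\bbC\calG$ vanishing at all nontrivial characters are multiples of $\bfone(z)$, then pin down the residual scalar matrix as $c(\bfJ-\bfI)$ by evaluating at the trivial character. The algebra at $\gamma_0$ checks out, so nothing is missing.
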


In~\cite{CoutinkhoGSZ16}, this idea is used to produce ETFs from known algebro-combinatorial constructions of abelian DRACKNs, as well as to derive necessary conditions on the existence of abelian DRACKNs from known necessary conditions on the existence of ETFs.
Nearly all of the examples of abelian $(n,f,c)$-DRACKNs given in~\cite{CoutinkhoGSZ16} have $n=fc$.
In this case, \eqref{equation.dimension of abelian DRACKN ETF} simplifies to $d=\frac12(n+\sqrt{n}\,)$.
This might be disappointing to researchers focused on finding new ETFs,
since other ETFs with these same $d$ and $n$ parameters are already well known.
Indeed, for any integer $v>3$, the Steiner ETF~\cite{FickusMT12} arising from all $2$-element subsets of $\set{1,\dotsc,v}$ consists of $v^2$ vectors which span a space of dimension $\frac12v(v-1)$;
a special case of these are Steiner ETFs arising from an affine geometry over the field $\bbZ_2$,
which themselves contain an infinite family of harmonic ETFs arising from a certain type of McFarland difference set~\cite{DingF07,JasperMF14}.
In particular, whenever $n$ is a perfect square, there exists a Steiner ETF whose Naimark complements have parameters $(d,n)$ where $d=\frac12(n+\sqrt{n}\,)$.
The only abelian $(n,f,c)$-DRACKN mentioned in~\cite{CoutinkhoGSZ16} that does not have $n=fc$ is a $(45,3,12)$-DRACKN from~\cite{KlinP11}.
By~\eqref{equation.dimension of abelian DRACKN ETF}, this DRACKN yields an ETF of $n=45$ vectors in a space of dimension $d=12$.
A harmonic ETF with these same parameters was already known to exist, arising from a McFarland difference set~\cite{DingF07}.

To be clear, it is difficult to determine whether the ETFs given in~\cite{CoutinkhoGSZ16} are new.
The signature matrices of two ETFs are usually considered \textit{equivalent} when one can be obtained from the other by conjugation by a phased permutation matrix.
As such, determining whether two ETFs are equivalent can be harder than determining whether two graphs are isomorphic.

In the coming sections, we use Lemma~\ref{lemma.DRACKN implies ETF} to construct an infinite family of complex ETFs that is demonstrably new, having new $d$ and $n$ parameters.
The main idea is to show that some members of a previously known class of DRACKNs happen to be abelian,
namely those that arise from abelian covers of BIBDs.

\section{Phased BIBD ETFs}

A BIBD is a type of finite geometry.
There is a well-known way to construct ETFs from BIBDs, namely the Steiner ETFs of~\cite{FickusMT12}.
In this section, we present a new way to construct ETFs from BIBDs.
As we shall see in the coming sections, this new construction is related to polyphase matrices and DRACKNs.

To be precise, for positive integers $v>k\geq2$ and $\lambda$,
a $\BIBD(v,k,\lambda)$ is a $v$-element vertex set $\calV$ along with a collection $\calB$ of subsets of $\calV$ called \textit{blocks} such that every block contains exactly $k$ vertices, and every pair of distinct vertices is contained in exactly $\lambda$ blocks.
Letting $b$ denote the number of blocks,
this means its $b\times v$ incidence matrix $\bfX$ satisfies $\bfX\bfone=k\bfone$ and that the off-diagonal entries of $\bfX^\rmT\bfX$ are all $\lambda$.
As such, for any $j=1,\dotsc,v$, the number $r_j$ of blocks that contains $j$th vertex satisfies
\begin{equation*}
(v-1)\lambda
=\sum_{\substack{j'=1\\j'\neq j}}^v(\bfX^\rmT\bfX)(j,j')
=\sum_{i=1}^{b}\bfX(i,j)\sum_{\substack{j'=1\\j'\neq j}}^v\bfX(i,j')
=\sum_{i=1}^{b}\left\{\begin{array}{cl}k-1,&\bfX(i,j)=1\\0,&\bfX(i,j)=0\end{array}\right\}
=r_j(k-1).
\end{equation*}
That is, this number $r_j=r:=\lambda\frac{v-1}{k-1}$ is independent of $j$.
At this point, summing all entries of $\bfX$ gives $bk=vr$ and so \smash{$b=\lambda\frac{v(v-1)}{k(k-1)}$} is also uniquely determined by $v$, $k$ and $\lambda$.
Moreover, since $v>k$ we have $r>\lambda$ and so $\bfX^\rmT\bfX=(r-\lambda)\bfI+\lambda \bfJ$ has full rank, implying $\bfX$ has rank $v$, and so $b\geq v$; this fact is known as \textit{Fisher's inequality}.

We focus on BIBDs where $\lambda=1$, which are also known as $(2,k,v)$-\textit{Steiner systems}.
In this case, the above facts about BIBDs can be summarized as
\begin{equation}
\label{equation.BIBD properties}
bk=vr,
\quad
v-1=r(k-1),
\quad
b\geq v,
\quad
\bfX\bfone=k\bfone,
\quad
\bfone^\rmT\bfX=r\bfone^\rmT,
\quad
\bfX^\rmT\bfX=(r-1)\bfI+\bfJ.
\end{equation}
In a $\BIBD(v,k,1)$, any two distinct vertices determine a unique block.
This in turn implies that any two distinct blocks have at most one vertex in common.
One classical example of such a design is an affine plane of order $q$, namely a $\BIBD(q^2,q,1)$ where $q$ is a prime power.

Now let $\bfPhi$ be any matrix whose entrywise squared-modulus $\abs{\bfPhi}^2$ equals the incidence matrix $\bfX$ of a $\BIBD(v,k,1)$.
(Here, squaring the modulus is unnecessary; we simply do it to be more consistent with theory introduced in the next section.)
That is, let $\bfPhi$ be a \textit{phased} $\BIBD(v,k,1)$.
Since any two columns of $\bfX$ have only one row index of common support, the columns $\set{\bfphi_i}_{i=1}^{n}$ of $\bfPhi$ are equiangular with $\norm{\bfphi_i}^2=r$ and $\abs{\ip{\bfphi_i}{\bfphi_j}}=1$ for all $i\neq j$.
This naturally leads one to ask when these columns also form a tight frame for their span:

\begin{definition}
If $\abs{\bfPhi}^2$ is the incidence matrix of a $\BIBD(v,k,1)$ and the columns of $\bfPhi$ form a tight frame for their span, we say they form a \textit{phased BIBD ETF}.
\end{definition}

Much of our work here is geared towards necessary conditions and explicit constructions of phased BIBD ETFs.
Since $b\geq v$, any phased BIBD $\bfPhi$ is a ``tall, skinny" matrix.
In order for the columns of $\bfPhi$ to form an ETF for their span, this means the rows of $\bfPhi$ should not be designed to be orthogonal (as is the case with harmonic ETFs and Steiner ETFs), but rather, so that $\bfPhi$ satisfies the criteria of Lemma~\ref{lemma.tight for subspace}.
To see that this is even possible, consider the following examples.

\begin{example}
\label{example.regular simplices as phased BIBD ETFs}
For any $v\geq3$, the collection of all $2$-element subsets of the vertex set $\set{1,\dotsc,v}$ forms a $\BIBD(v,2,1)$.
Here, $r=v-1$ and $b=\binom{v}2$.
Since $k=2$, each row of a corresponding incidence matrix $\bfX$ contains exactly two $1$'s.
Let $\bfPhi$ be obtained from $\bfX$ by negating one $1$ in each row of $\bfX$.
For example, when $v=3$, we can take
\begin{equation}
\label{equation.3x3 phased BIBD ETF}
\bfX
=\left[\begin{array}{rrr}1&1&0\\1&0&1\\0&1&1\end{array}\right],
\qquad
\bfPhi
=\left[\begin{array}{rrr}1&-1&0\\1&0&-1\\0&1&-1\end{array}\right].
\end{equation}
Then $\bfPhi^*\bfPhi=v\bfI-\bfJ$ and so satisfies $(\bfPhi^*\bfPhi)^2=(v\bfI-\bfJ)^2=v^2\bfI-v\bfJ=v\bfPhi^*\bfPhi$,
implying by Lemma~\ref{lemma.tight for subspace} that the columns of $\bfPhi$ form an ETF for their span of dimension $d=\frac{rv}{a}=r=v-1$.
Thus, the columns of $\bfPhi$ are a $v$-vector regular simplex.
In particular, the columns of the matrix $\bfPhi$ given in \eqref{equation.3x3 phased BIBD ETF} are a ``high"-dimensional representation of the famous \textit{Mercedes-Benz} ETF.
\end{example}

\begin{example}
\label{example.12x9 phased BIBD ETF}
Consider the following matrix obtained by phasing the incidence matrix of a $\BIBD(9,3,1)$ with a cube root of unity $z$,
as well as its corresponding Gram matrix:
\newlength{\NewLengthOne}
\settowidth{\NewLengthOne}{$z^2$}
\begin{equation}
\label{equation.12x9 phased BIBD ETF}
\setlength{\arraycolsep}{2pt}
\bfPhi(z)=\left[\begin{array}{lllllllll}
\makebox[\NewLengthOne][l]{1}&\makebox[\NewLengthOne][l]{1}&\makebox[\NewLengthOne][l]{1}&  0&  0&  0&  0&  0&  0\\
  0&  0&  0&  1&  1&  1&  0&  0&  0\\
  0&  0&  0&  0&  0&  0&  1&  1&  1\\
  1&  0&  0&  1&  0&  0&  1&  0&  0\\
  0&  1&  0&  0&z^2&  0&  0&  z&  0\\
  0&  0&  1&  0&  0&  z&  0&  0&z^2\\
  1&  0&  0&  0&  0&z^2&  0&z^2&  0\\
  0&  1&  0&  z&  0&  0&  0&  0&  1\\
  0&  0&  1&  0&  1&  0&  z&  0&  0\\
  1&  0&  0&  0&  z&  0&  0&  0&  z\\
  0&  1&  0&  0&  0&  1&z^2&  0&  0\\
  0&  0&  1&z^2&  0&  0&  0&  1&  0\\
\end{array}\right],
\quad
\bfPhi^*(z)\bfPhi(z)
=\left[\begin{array}{lllllllll}
  4&  1&  1&  1&  z&z^2&  1&z^2&  z\\
  1&  4&  1&  z&z^2&  1&z^2&  z&  1\\
  1&  1&  4&z^2&  1&  z&  z&  1&z^2\\
  1&z^2&  z&  4&  1&  1&  1&  z&z^2\\
z^2&  z&  1&  1&  4&  1&  z&z^2&  1\\
  z&  1&z^2&  1&  1&  4&z^2&  1&  z\\
  1&  z&z^2&  1&z^2&  z&  4&  1&  1\\
  z&z^2&  1&z^2&  z&  1&  1&  4&  1\\
z^2&  1&  z&  z&  1&z^2&  1&  1&  4
\end{array}\right].
\end{equation}
Since the above expression for the Gram matrix is valid for any complex $z$ such that $z^3=1$,
we can regard the entries of $\bfPhi(z)$ and $\bfPhi^*(z)\bfPhi(z)$ as members of the ring of polynomials $\bbC[z]/\gen{z^3-1}$, namely as the $z$-transforms of members of the group ring $\bbC\bbZ_3$.
A direct calculation reveals the product of the polyphase matrices in~\eqref{equation.12x9 phased BIBD ETF} is
\begin{equation}
\label{equation.12x19 polyphase equation}
\bfPhi(z)\bfPhi^*(z)\bfPhi(z)=6\bfPhi(z)+(1+z+z^2)(\bfJ-\bfX),
\end{equation}
where $\bfX$ is the incidence matrix of the underlying $\BIBD(9,3,1)$.
Applying the three characters to this expression equates to evaluating these polynomials at cube roots of unity.
In particular, evaluating~\eqref{equation.12x19 polyphase equation} at $\gamma=1$ gives
$\bfX\bfX^\rmT\bfX=6\bfX+3(\bfJ-\bfX)=3\bfX+3\bfJ$, a fact that also immediately follows from the properties of a BIBD given in~\eqref{equation.BIBD properties}.
Meanwhile, since $\gamma=\exp(\frac{2\pi\rmi}3)$ and its conjugate are roots of the geometric sum $1+z+z^2$,
evaluating~\eqref{equation.12x19 polyphase equation} at either of these points gives $\bfPhi(\gamma)\bfPhi^*(\gamma)\bfPhi(\gamma)=6\bfPhi(\gamma)$, implying by Lemma~\ref{lemma.tight for subspace} that the columns of $\bfPhi(\gamma)$ form an ETF for a subspace of $\bbC^{12}$ of dimension $6$.

We note that our $(d,n)=(6,9)$ ETF Gram matrix $\bfPhi^*(\gamma)\bfPhi(\gamma)$ here seems essentially the same as one constructed in~\cite{BodmannPT09}.
In~\cite{BodmannE10}, that construction was generalized to yield Gram matrices of ETFs with parameters $(d,n)=(\frac12q(q+1),q^2)$ whose off-diagonal elements are $q$th roots of unity, where $q\geq2$ is an arbitrary integer.
That level of generalization seems unlikely here, since the requisite $\BIBD(q^2,q,1)$ are affine planes of order $q$, which are famously conjectured to only exist when $q$ is a prime power.
\end{example}

Later on, we will revisit this example to gain greater insight into the relationship between certain ETFs, abelian DRACKNs and abelian GQs.
For now, having seen that phased BIBD ETFs indeed exist, we now state and prove what we were able to discover about them in general:

\begin{theorem}
\label{theorem.phased BIBD ETFs}
If the columns of $\bfPhi$ form an ETF for their span and $\abs{\bfPhi}^2$ is the $b\times v$ incidence matrix of a $\BIBD(v,k,1)$ then the rank of $\bfPhi$ is necessarily
\begin{equation}
\label{equation.phased BIBD ETF dimension}
d
=\frac{vr}{r+k-1},
\end{equation}
where $r=\frac{v-1}{k-1}$.
The redundancy $\frac{v}d$ of such an ETF is less than $2$.
Moreover, if $\bfPhi$ is a matrix such that $\abs{\bfPhi}^2$ is the $b\times v$ incidence matrix of a $\BIBD(v,k,1)$,
then the columns of $\bfPhi$ form an ETF for their span if and only if
\begin{equation}
\label{equation.phased BIBD ETF condition}
\sum_{i'=1}^{b}\sum_{j'=1}^{v}\bfPhi(i,j')\overline{\bfPhi(i',j')}\bfPhi(i',j)=0
\ \text{ for all $i$ and $j$ such that }\
\bfPhi(i,j)=0.
\end{equation}
\end{theorem}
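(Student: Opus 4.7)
The plan is to use Lemma~\ref{lemma.tight for subspace}(ii) as the working criterion, showing that under the BIBD hypothesis the equation $\bfPhi\bfPhi^*\bfPhi=a\bfPhi$ collapses to exactly~\eqref{equation.phased BIBD ETF condition}. First note that equiangularity is automatic: each column of $\bfPhi$ has $r$ unimodular entries, and since any two distinct vertices of a $\BIBD(v,k,1)$ lie in exactly one common block (so that two distinct columns of $\bfPhi$ share a unique nonzero row), one has $\norm{\bfphi_j}^2=r$ and $\abs{\ip{\bfphi_i}{\bfphi_j}}=1$ for $i\neq j$. Hence by the equivalence of (i) and (ii) in Lemma~\ref{lemma.tight for subspace}, the columns form an ETF for their span if and only if $\bfPhi\bfPhi^*\bfPhi=a\bfPhi$ for some $a>0$.

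The heart of the argument is a direct computation of $(\bfPhi\bfPhi^*\bfPhi)(i,j)$ at positions with $\bfPhi(i,j)\neq 0$, i.e., where vertex $j$ lies in block $i$. Split the sum on the left of~\eqref{equation.phased BIBD ETF condition} according to whether $i'=i$ or $i'\neq i$. The $i'=i$ terms contribute $k\bfPhi(i,j)$, since $\abs{\bfPhi(i,j')}^2=1$ for each of the $k$ vertices $j'$ in block $i$. For $i'\neq i$, a summand can be nonzero only when $j\in i'$ and $j'\in i\cap i'$; the $\lambda=1$ condition forces $\abs{i\cap i'}\leq 1$, and since the shared vertex must be $j$, we obtain $j'=j$, so the summand is $\bfPhi(i,j)\abs{\bfPhi(i',j)}^2=\bfPhi(i,j)$. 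Summing over the $r-1$ other blocks through $j$ contributes an additional $(r-1)\bfPhi(i,j)$, for a total of $(r+k-1)\bfPhi(i,j)$, \emph{independent of the chosen phases}. Therefore $\bfPhi\bfPhi^*\bfPhi=(r+k-1)\bfPhi$ already holds at every position in the support of $\bfPhi$, so $\bfPhi\bfPhi^*\bfPhi=a\bfPhi$ holds with $a=r+k-1$ if and only if the off-support entries of $\bfPhi\bfPhi^*\bfPhi$ also vanish, which is precisely~\eqref{equation.phased BIBD ETF condition}.

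The three claims then follow quickly. When the columns of $\bfPhi$ do form an ETF, the tight frame constant is $a=r+k-1$; substituting into the relation $a=rn/d$ from~\eqref{equation.ETF parameter relations} with $n=v$ yields $d=vr/(r+k-1)$. The redundancy is $v/d=1+(k-1)/r$, and Fisher's inequality $b\geq v$ together with $bk=vr$ in~\eqref{equation.BIBD properties} gives $r\geq k>k-1$, so $v/d<2$. The characterization~\eqref{equation.phased BIBD ETF condition} is the content of the previous paragraph. The principal obstacle is the combinatorial identity pinning $(\bfPhi\bfPhi^*\bfPhi)(i,j)$ to $(r+k-1)\bfPhi(i,j)$ on the support of $\bfPhi$ regardless of the phases; this is precisely where the $\lambda=1$ structure is used, and it is what reduces the ETF property to the clean off-support vanishing criterion~\eqref{equation.phased BIBD ETF condition}.
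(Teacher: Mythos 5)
Your proposal is correct and follows essentially the same route as the paper: reduce to the criterion $\bfPhi\bfPhi^*\bfPhi=a\bfPhi$ from Lemma~\ref{lemma.tight for subspace}, use the $\lambda=1$ structure to show the support entries of $\bfPhi\bfPhi^*\bfPhi$ automatically equal $(r+k-1)\bfPhi(i,j)$ (forcing $a=r+k-1$), and thereby collapse tightness to the off-support vanishing condition~\eqref{equation.phased BIBD ETF condition}, with the dimension and redundancy claims following from~\eqref{equation.ETF parameter relations} and Fisher's inequality exactly as in the paper. The only cosmetic difference is that you extract $d$ from $a=rn/d$ while the paper uses the Welch-bound relation for $w$; these are equivalent.
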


\begin{proof}
Since $n=v$, $\norm{\bfphi_i}^2=r$ for all $i$, and $\abs{\ip{\bfphi_i}{\bfphi_j}}=1=w$ for all $i\neq j$, \eqref{equation.ETF parameter relations} gives
\begin{equation*}
\frac vd
=\frac{n-d}d+1
=\frac{(n-1)w^2}{r^2}+1
=\frac{v-1}{r^2}+1
=\frac{r+k-1}{r},
\end{equation*}
and so~\eqref{equation.phased BIBD ETF dimension}.
Moreover, $v\leq b$ and so $k\leq r$, implying $\frac vd=\frac{k-1}{r}+1<1+1=2$, as claimed.
For the final conclusion, note that by Lemma~\ref{lemma.tight for subspace},
the columns of $\bfPhi$ form an ETF for their span if and only if $\bfPhi\bfPhi^*\bfPhi=a\bfPhi$ where $a=\frac{rn}{d}=r\frac{v}d=r+k-1$.
That is, the columns of $\bfPhi$ form an ETF for their span if and only if
\begin{equation}
\label{equation.proof of phased BIBD ETFs 1}
(r+k-1)\bfPhi(i,j)
=(\bfPhi\bfPhi^*\bfPhi)(i,j)
=\sum_{i'=1}^{b}\sum_{j'=1}^{v}\bfPhi(i,j')\overline{\bfPhi(i',j')}\bfPhi(i',j),
\end{equation}
for all $i$ and $j$.
We claim this equation is automatically satisfied for those $i$ and $j$ for which $\abs{\bfPhi(i,j)}=1$.
Indeed, for any such $i$ and $j$, the only nonzero summands of \eqref{equation.proof of phased BIBD ETFs 1} occur when
$\abs{\bfPhi(i,j)}$, $\abs{\bfPhi(i,j')}$, $\abs{\bfPhi(i',j')}$ and $\abs{\bfPhi(i',j)}$ are all one,
namely when the $i$th and $i'$th vertices are contained in both the $j$th and $j'$th blocks.
Since two distinct vertices determine a unique block while any two distinct blocks have at most one vertex in common,
this happens precisely when either $i=i'$ or $j=j'$.
That is, when $\abs{\bfPhi(i,j)}=1$, the sum in~\eqref{equation.proof of phased BIBD ETFs 1} simplifies to
\begin{align*}
\bfPhi(i,j)\overline{\bfPhi(i,j)}\bfPhi(i,j)
+\sum_{\substack{j'=1\\j'\neq j}}^{v}\bfPhi(i,j')\overline{\bfPhi(i,j')}\bfPhi(i,j)
+\sum_{\substack{i'=1\\i'\neq i}}^{b}\bfPhi(i,j)\overline{\bfPhi(i',j)}\bfPhi(i',j)
=(r+k-1)\bfPhi(i,j),
\end{align*}
as claimed.
As such, the columns of $\bfPhi$ form a tight frame for their span if and only if \eqref{equation.proof of phased BIBD ETFs 1} holds for those $i$ and $j$ for which $\bfPhi(i,j)=0$.
\end{proof}

Note that only $k$ of the summands in~\eqref{equation.phased BIBD ETF condition} are nonzero:
since $\abs{\bfPhi}^2$ is the incidence matrix of a $\BIBD(v,k,1)$ and $\bfPhi(i,j)=0$, there are exactly $k$ choices of $j'\neq j$ such that $\bfPhi(i,j')\neq0$, and for each such $j'$, there is a unique choice of $i'$ such that $\bfPhi(i',j)\neq0\neq\bfPhi(i',j')$.
As such,~\eqref{equation.phased BIBD ETF condition} means that whenever $\bfPhi(i,j)=0$,
the $k$ nonzero values of $\bfPhi(i,j')\overline{\bfPhi(i',j')}\bfPhi(i',j)$ are equally distributed about the origin.

Further note that in order for a phased BIBD ETF to exist, the dimension~\eqref{equation.phased BIBD ETF dimension} of their span is necessarily an integer.
In the next result, we relate this fact to other known necessary conditions on the existence of ETFs.
Some of these conditions only apply when the ETF in question is real, such as the regular simplices discussed in Example~\ref{example.regular simplices as phased BIBD ETFs}.
In particular, Theorem~A of~\cite{SustikTDH07} states that if there exists a real $n$-vector ETF for a space of dimension $d$ and $1<d<n-1$ with $n\neq 2d$ then both
\begin{equation*}
\biggl[\frac{d(n-1)}{n-d}\biggr]^{\frac12},
\qquad
\biggl[\frac{(n-d)(n-1)}{d}\biggr]^{\frac12}
\end{equation*}
are necessarily odd integers.
This follows from a delicate analysis of the spectrum of its signature matrix.
Besides this, the only known necessary conditions on the existence of ETFs are the \textit{Gerzon bounds},
which state that if an $n$-vector ETF for a $d$-dimensional Hilbert space $\bbH_d$ exists where $d>1$, then $n\leq\binom{d+1}2$ if the ETF is real and $n\leq d^2$ if the ETF is complex.
Indeed, for any noncollinear equiangular vectors $\set{\bfphi_i}_{i=1}^{n}$,
the corresponding operators $\set{\bfphi_i^{}\bfphi_i^*}_{i=1}^{n}$ lie in the real space of all self-adjoint operators from $\bbH_d$ to itself, which is a Hilbert space under the Frobenius-Hilbert-Schmidt inner product $\ip{\bfA}{\bfB}:=\Tr(\bfA^*\bfB)$;
this space has dimension $\binom{d+1}2$ or $d^2$ depending on whether these vectors are real or complex,
and these operators are linearly independent since their Gram matrix $\abs{\bfPhi^*\bfPhi}^2=(r^2-w^2)\bfI+w^2\bfJ$ is invertible.
Since any phased BIBD ETF has $v\leq 2d$, the Gerzon bounds are best applied to its Naimark complements:
if $v>d+1$, a phased BIBD ETF has $v\leq\binom{v-d+1}2$ if it is real and $v\leq(v-d)^2$ if it is complex.

\begin{theorem}
\label{theorem.phased BIBD necessary conditions}
If the columns of $\bfPhi$ form an ETF for their span and $\abs{\bfPhi}^2$ is the $b\times v$ incidence matrix of a $\BIBD(v,k,1)$ then
\begin{equation}
\label{equation.phased BIBD ETF parameter}
u:=\frac{k(k-1)^2(k-2)}{v+k(k-2)}
\end{equation}
is a nonnegative integer that divides both $k(k-1)(k-2)$ and $r(k-1)(k-2)$ where \smash{$r=\frac{v-1}{k-1}$}.
Also, $u\leq\frac12(k-1)(k-2)$.
Moreover, under these assumptions,
\begin{equation}
\label{equation.phased BIBD ETF reciprocal Welch bound}
\biggl[\frac{d(v-1)}{v-d}\biggr]^{\frac12}=r,
\qquad
\biggl[\frac{(v-d)(v-1)}{d}\biggr]^{\frac12}=k-1,
\end{equation}
where $d$ is given by~\eqref{equation.phased BIBD ETF dimension}.
In particular, if $k>2$ and $\bfPhi$ is real then $v$ and $k$ are even, and $r$ is odd.

Finally, various types of phased BIBD ETFs are characterized by the value of their $u$ parameter:
\begin{enumerate}
\alphi
\item $u=0\ \Leftrightarrow\ k=2\ \Leftrightarrow\ v=d+1$,
\item $u=1\ \Leftrightarrow\ v=k^2(k-2)^2\ \Leftrightarrow\ v=(v-d)^2$,
\item $u=2\ \Leftrightarrow\ v=\binom{k(k-2)}{2}\ \Leftrightarrow\ v=\binom{v-d+1}2$,
\item $u=\frac12(k-1)(k-2),\, k>2\ \Leftrightarrow\ v=k^2,\, k>2\ \Leftrightarrow\ v-d=\frac12(v-v^{\frac12})$, $v>4$.
\end{enumerate}
\end{theorem}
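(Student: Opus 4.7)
The approach uses the BIBD parameter relations $bk=vr$ and $v-1=r(k-1)$ together with the dimension formula $d=vr/(r+k-1)$ from Theorem~\ref{theorem.phased BIBD ETFs}. Since $d$ is a positive integer and $vr+v(k-1)=v(r+k-1)$, the divisibility $(r+k-1)\mid vr$ is equivalent to $(r+k-1)\mid v(k-1)$. The identity $v(k-1)=(r+k-1)(k-1)^2-k(k-1)(k-2)$, itself a direct consequence of $v-1=r(k-1)$, then shows this is further equivalent to $(r+k-1)\mid k(k-1)(k-2)$, i.e., $u$ is a nonnegative integer. The divisibility $u\mid k(k-1)(k-2)$ is immediate from $u(r+k-1)=k(k-1)(k-2)$, while $u\mid r(k-1)(k-2)$ follows from the computation $r(k-1)(k-2)/u=r+r(r-1)/k$, which is an integer because $k\mid r(r-1)$ (reduce $bk=vr=r^2(k-1)+r$ modulo $k$).

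For the bound $u\le\tfrac12(k-1)(k-2)$, the case $k=2$ is trivial. For $k\ge 3$ the inequality rearranges to $r\ge k+1$, so I must rule out $r=k$, which is the symmetric-BIBD (projective plane) case. If $r=k$ then integrality of $u$ would force $(2k-1)\mid k(k-1)(k-2)$. But $\gcd(2k-1,k)=\gcd(2k-1,k-1)=1$ and $\gcd(2k-1,k-2)\mid 3$ (since $(2k-1)-2(k-2)=3$), so by coprimality $\gcd(2k-1,k(k-1)(k-2))\mid 3$. This forces $2k-1\le 3$, contradicting $k\ge 3$. This gcd calculation is the main obstacle in the proof; the other pieces are algebraic.

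The reciprocal Welch identities~\eqref{equation.phased BIBD ETF reciprocal Welch bound} are obtained by substituting $d=vr/(r+k-1)$, $v-d=v(k-1)/(r+k-1)$, and $v-1=r(k-1)$ into the left-hand sides; both collapse to the perfect squares $r^2$ and $(k-1)^2$. For the real case with $k>2$, I invoke Theorem~A of~\cite{SustikTDH07}. Its hypotheses $1<d<v-1$ and $v\ne 2d$ are verified as follows: $d>1$ since $d\ge r\ge k\ge 2$; $d<v-1$ reduces via the above expressions to $(k-1)^2>1$, which holds because $k>2$; and $v=2d$ would force $r=k-1$, contradicting Fisher's $r\ge k$. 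Sustik's theorem then asserts that $r$ and $k-1$ are odd integers, so $k$ is even and $v-1=r(k-1)$ is odd, making $v$ even as claimed.

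For the four characterizations, each prescribed value of $u$ determines $r+k-1$ (namely $\infty$ for $u=0$, $k(k-1)(k-2)$ for $u=1$, $\tfrac12 k(k-1)(k-2)$ for $u=2$, and $2k$ for $u=\tfrac12(k-1)(k-2)$), which via $v-d=v(k-1)/(r+k-1)$ expresses $v-d$ explicitly in terms of $v$ and $k$. Combining with $v-1=r(k-1)$ then solves for $v$ in terms of $k$: for example, (b) yields $r=(k-1)(k^2-2k-1)$ and $v=(k-1)^2(k^2-2k-1)+1=(k^2-2k)^2=k^2(k-2)^2$, and correspondingly $(v-d)^2=[v/(k(k-2))]^2=v$. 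The other parts follow the same pattern, with (a) being the degenerate $k=2$ case and (d) additionally using that $v=k^2$ with $k>2$ is equivalent to $v>4$ together with $v-d=\tfrac12(v-\sqrt v)$.
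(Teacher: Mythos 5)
Your proposal is correct and rests on the same three pillars as the paper's own proof: the integrality of $d$, $r$ and $b$, Fisher's inequality $r\geq k$, and Theorem~A of~\cite{SustikTDH07}. The differences are largely cosmetic. For integrality of $u$ you pass through the chain $(r+k-1)\mid vr\Leftrightarrow(r+k-1)\mid v(k-1)\Leftrightarrow(r+k-1)\mid k(k-1)(k-2)$, whereas the paper simply observes that $v-d=(k-1)^2-u$; both ultimately rest on $d\in\bbZ$. To exclude $r=k$ when $k\geq3$ you run a gcd argument showing $\gcd(2k-1,k(k-1)(k-2))\mid 3$, while the paper shows directly that $d=\frac{k(k^2-k+1)}{2k-1}$ fails to be an integer; both computations reduce to $(2k-1)\mid 3$. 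Your derivation of $u\mid r(k-1)(k-2)$ via $k\mid r(r-1)$ is the same use of $b\in\bbZ$ in different clothing. The one place your write-up is genuinely thinner than the paper's is in parts (b)--(d): you verify only the forward implications (prescribed $u$ $\Rightarrow$ explicit $v$ in terms of $k$ $\Rightarrow$ condition relating $v$ and $d$) and assert that the rest ``follow the same pattern.'' The reverse implications, e.g.\ $v=\binom{v-d+1}{2}\Rightarrow u=2$, are the nontrivial half of each equivalence: substituting $v-d=(k-1)^2-u$ and the definition of $u$ turns each condition into a polynomial identity in $v$ (or in $v^{1/2}$ for (d)) that has spurious roots such as $v=1$, which must be discarded using $v>k\geq2$. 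This is routine but needs to be carried out, as the paper does explicitly for (c) and (d).
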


The proof of this result is given in Appendix~A.
One consequence of it is that for any fixed $k>2$,
there are only a finite number of choices of $v$ for which a phased $\BIBD(v,k,1)$ ETF exists.
In Table~\ref{table.phased BIBD ETFs}, we list the BIBD ETF parameters that meet the necessary conditions of Theorem~\ref{theorem.phased BIBD necessary conditions} for $3\leq k\leq 9$.
We also list the dimension $v-d$ of the span of a Naimark complement of a corresponding phased BIBD ETF, should it exist.
We list this dimension instead of $d$ since phased BIBD ETFs have $v<2d$, cf.\ Theorem~\ref{theorem.phased BIBD ETFs},
whereas other tables of ETFs such as~\cite{FickusM15} assume $v>2d$.
As seen in the proof above, these necessary conditions ensure that $r=\frac{v-1}{k-1}$ and $b=\frac vk r$ are integers and that Fisher's inequality ($v\leq b$) is satisfied.
These necessary conditions do not take into account other known necessary conditions on the existence of $\BIBD(v,k,1)$, such as the Bruck-Ryser-Chowla Theorem.
This is why this table lists the parameters $(v,k)=(36,6)$ despite the fact that no affine plane of order $6$ exists; this means that known ETFs with $(d,n)=(21,36)$ cannot be realized as a phased $\BIBD(36,6,1)$.

\newlength{\NewLengthTwo}
\settowidth{\NewLengthTwo}{218736}
\begin{table}
\begin{center}
\begin{tabular}{ccccccccc}
\makebox[\NewLengthTwo]{$v-d$}&\makebox[\NewLengthTwo]{$v$}&\makebox[\NewLengthTwo]{$k$}&\makebox[\NewLengthTwo]{$r$}&\makebox[\NewLengthTwo]{$b$}&\makebox[\NewLengthTwo]{$u$}%
&\makebox[\NewLengthTwo]{ETF?}&\makebox[\NewLengthTwo]{BIBD?}&\makebox[\NewLengthTwo]{PBETF?}\\
\hline
    3&     9&     3&     4&    12&     1&$\bbC$&     Y&$\bbC$\\
\hline
    6&    16&     4&     5&    20&     3&$\bbR$&     Y&$\bbR$\\
    7&    28&     4&     9&    63&     2&$\bbR$&     Y&$\bbR$\\
    8&    64&     4&    21&   336&     1&$\bbC$&     Y&      \\
\hline
   10&    25&     5&     6&    30&     6&$\bbC$&     Y&$\bbC$\\
   12&    45&     5&    11&    99&     4&$\bbC$&     Y&      \\
   13&    65&     5&    16&   208&     3&$\bbC$&     Y&$\bbC$\\
   14&   105&     5&    26&   546&     2&      &     Y&      \\
   15&   225&     5&    56&  2520&     1&$\bbC$&      &      \\
\hline
   15&    36&     6&     7&    42&    10&$\bbR$&     N&     N\\
   17&    51&     6&    10&    85&     8&      &      &      \\
   19&    76&     6&    15&   190&     6&$\bbC$&     Y&      \\
   20&    96&     6&    19&   304&     5&$\bbC$&     Y&      \\
   21&   126&     6&    25&   525&     4&$\bbR$&     Y&$\bbR$\\
   23&   276&     6&    55&  2530&     2&$\bbR$&      &      \\
   24&   576&     6&   115& 11040&     1&$\bbC$&      &      \\
\hline
   21&    49&     7&     8&    56&    15&$\bbC$&     Y&$\bbC$\\
   26&    91&     7&    15&   195&    10&$\bbC$&     Y&      \\
   30&   175&     7&    29&   725&     6&$\bbC$&     Y&      \\
   31&   217&     7&    36&  1116&     5&$\bbC$&     Y&      \\
   33&   385&     7&    64&  3520&     3&      &      &      \\
   34&   595&     7&    99&  8415&     2&      &      &      \\
   35&  1225&     7&   204& 35700&     1&$\bbC$&      &      \\
\hline
   28&    64&     8&     9&    72&    21&$\bbR$&     Y&$\bbR$\\
   35&   120&     8&    17&   255&    14&$\bbR$&     Y&      \\
   42&   288&     8&    41&  1476&     7&      &     Y&      \\
   43&   344&     8&    49&  2107&     6&$\bbR$&     Y&$\bbR$\\
   46&   736&     8&   105&  9660&     3&      &      &      \\
   47&  1128&     8&   161& 22701&     2&      &      &      \\
   48&  2304&     8&   329& 94752&     1&$\bbC$&      &      \\
\hline
   36&    81&     9&    10&    90&    28&$\bbC$&     Y&$\bbC$\\
   50&   225&     9&    28&   700&    14&$\bbC$&      &      \\
   56&   441&     9&    55&  2695&     8&$\bbC$&      &      \\
   57&   513&     9&    64&  3648&     7&$\bbC$&     Y&$\bbC$\\
   60&   945&     9&   118& 12390&     4&      &      &      \\
   62&  1953&     9&   244& 52948&     2&      &      &      \\
   63&  3969&     9&   496&218736&     1&      &      &      \\
\end{tabular}
\caption{%
\label{table.phased BIBD ETFs}
The parameters of all $\BIBD(v,k,1)$ with $3\leq k\leq 9$ that meet the necessary conditions on phased BIBD ETFs given in Theorem~\ref{theorem.phased BIBD necessary conditions}.
BIBDs with $k=2$ are not listed since they correspond to regular simplices, cf.\ Example~\ref{example.regular simplices as phased BIBD ETFs} and Theorem~\ref{theorem.phased BIBD necessary conditions}(a).
For each $v$ and $k$, we list the $r$ and $b$ BIBD parameters as well as the $u$ parameter~\eqref{equation.phased BIBD ETF parameter}.
We also list the dimension $v-d$ of the span of the Naimark complements of a corresponding phased BIBD ETF, should it exist.
The ``ETF?" column lists whether any $v$-vector real or complex ETF for a space of dimension $v-d$ is known to exist; see~\cite{FickusM15} and also see~\cite{FickusMJ16} for the $(v-d,v)=(19,76)$ case.
The ``BIBD?" column indicates whether such a BIBD is known to exist~\cite{ColbournM07,MathonR07}.
The ``PBETF?" column indicates whether a real or complex phased BIBD ETF with these parameters is known;
all known examples are constructed in this paper.
Blank entries are unknown to us.
}
\end{center}
\end{table}

Note that $u=1$ meets all the necessary conditions of Theorem~\ref{theorem.phased BIBD necessary conditions} for any $k\geq 3$.
As seen in (b) of Theorem~\ref{theorem.phased BIBD necessary conditions},
this choice of $u$ corresponds to ETFs whose Naimark complements achieve the complex Gerzon bound.
As such, this approach might lead to new constructions of ETFs with $v=d^2$, called
\textit{symmetric, informationally complete, positive operator--valued measures} (SIC-POVMs) in the quantum information theory literature~\cite{RenesBSC04,Zauner99}.
Little seems to be known about BIBDs with $v=k^2(k-2)^2$ in general, and so the only Naimark complement of a SIC-POVM that we were able to construct as a phased BIBD ETF is given in Example~\ref{example.12x9 phased BIBD ETF}.
Similarly, $u=2$ meets all necessary conditions for any $k\geq 4$,
and by (c) any real phased BIBD with $v=\binom{k(k-2)}{2}$ would achieve the real Gerzon bound;
such ETFs are equivalent to tight spherical $5$-designs~\cite{FickusM15}.

In the coming sections, we show how to construct a phased $\BIBD(k^2,k,1)$ ETF whenever $k$ is the power of a prime.
Unfortunately, by Theorem~\ref{theorem.phased BIBD necessary conditions}(d), these ETFs are not necessarily new, since they are of the same size of Naimark complements of Steiner ETFs arising from $\BIBD(k,2,1)$~\cite{FickusMT12}.
We also show how to construct a phased $\BIBD(q^3+1,q+1,1)$ ETF for any prime power $q$.
Here,~\eqref{equation.phased BIBD ETF parameter} becomes $u=q-1$,
which clearly divides $k(k-1)(k-2)=q(q^2-1)$ and $r(k-1)(k-2)=q^3(q-1)$ and is no more than $\frac12(k-1)(k-2)=\frac12q(q-1)$.
The Naimark complement of the resulting ETF consists of $n=v=q^3+1$ vectors that span a space of dimension \smash{$v-d=(k-1)^2-u=q^2-q+1=\frac{q^3+1}{q+1}$}.
Whenever $q$ is an odd prime power, real ETFs of this size are already known to exist,
since they correspond to known SRGs constructed in~\cite{Godsil92}.
We show how the synthesis operator of such an ETF can be represented as a phased $\BIBD(q^3+1,q+1,1)$.
Moreover, we show that whenever $q$ is an even prime power, this same construction produces a complex phased BIBD ETF with $n=q^3+1$, \smash{$n-d=\frac{q^3+1}{q+1}$}.
As we shall explain, these complex ETFs are new whenever $q$ is an even prime power with the property that $q-1$ is not an odd prime power, which happens infinitely often.
For both of these constructions, the key idea is to not regard the entries of a phased BIBD as complex numbers, but rather as polynomials over some finite abelian group, as done in Example~\ref{example.12x9 phased BIBD ETF}.

\section{Polyphase BIBD ETFs}

In this section, we lay the foundation for two new constructions of phased BIBD ETFs.
Like Example~\ref{example.12x9 phased BIBD ETF}, we actually construct a polyphase matrix $\bfPhi(z)$ that gives an ETF when evaluated at any nontrivial character $\gamma$:
\begin{definition}
\label{definition.polyphase BIBD ETF}
Let $\calG$ be an abelian group of order $f$.
We say a polyphase matrix $\bfPhi(z)$ whose entries are either monomials over $\calG$ or zero is a \textit{(v,k,f)-polyphase BIBD ETF} if both:
\begin{enumerate}
\romani
\item
$\abs{\bfPhi(z)}^2$ is the incidence matrix of a $\BIBD(v,k,1)$,
\item
for all nontrivial characters $\gamma$ of $\calG$, the columns of $\bfPhi(\gamma)$ form an ETF for their span.
\end{enumerate}
\end{definition}

Here, the entrywise modulus squared $\abs{\bfPhi(z)}^2$ of any polyphase matrix $\bfPhi(z)$ is defined as the matrix whose $(i,j)$th entry is $\abs{\bfx(z)}^2=\overline{\bfx(z)}\bfx(z)=\tilde{\bfx}(z)\bfx(z)$ where $\bfx(z)$ is the $(i,j)$th entry of $\bfPhi(z)$.
We now characterize polyphase BIBD ETFs and relate them to abelian DRACKNs.

\begin{theorem}
\label{theorem.polyphase BIBD ETF}
Let $\calG$ be an abelian group of order $f$,
and let $\bfPhi(z)$ be a polyphase matrix whose entries are either monomials over $\calG$ or zero with the property that $\bfX=\abs{\bfPhi(z)}^2$ is the incidence matrix of a $\BIBD(v,k,1)$.
Letting $r:=\frac{v-1}{k-1}$, $\bfone(z):=\sum_{g\in\calG}z^g$ and letting $\bfPhi(z;i,j)$ denote the $(i,j)$th entry of $\bfPhi(z)$, the following are equivalent:\smallskip
\begin{enumerate}
\romani
\item
$\bfPhi(z)$ is a polyphase BIBD ETF, cf.\ Definition~\ref{definition.polyphase BIBD ETF}.
\medskip
\item
$\bfPhi(z)\bfPhi^*(z)\bfPhi(z)=(r+k-1)\bfPhi(z)+\tfrac kf\bfone(z)(\bfJ-\bfX)$.
\item
$\displaystyle\sum_{i'=1}^{b}\sum_{j'=1}^{v}\bfPhi(z;i,j')\overline{\bfPhi(z;i',j')}\bfPhi(z;i',j)=\tfrac kf\bfone(z)$ for all $i$ and $j$ such that $\bfPhi(z;i,j)=0$.
\item
For all $i$ and $j$ such that $\bfPhi(z;i,j)=0$, the cardinality of
\begin{equation*}
\set{(i',j'): z^g = \bfPhi(z;i,j')\overline{\bfPhi(z;i',j')}\bfPhi(z;i',j)}
\end{equation*}
is constant over all $g\in\calG$.
\end{enumerate}
In this case $f$ necessarily divides $k$ and $\bfA(z)=\bfPhi^*(z)\bfPhi(z)-r\bfI$ is an abelian $(v,f,c)$-DRACKN with $c=\tfrac{k(r-1)}f$, cf.\ Definition~\ref{definition.abelian DRACKN}.
Moreover, if $f$ is even then $\gamma$ can be chosen so that $\bfPhi(\gamma)$ is a real ETF.
\end{theorem}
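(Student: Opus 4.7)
The plan is to establish the four-way equivalence (i) $\Leftrightarrow$ (ii) $\Leftrightarrow$ (iii) $\Leftrightarrow$ (iv) using the same character-duality principle exploited in Lemma~\ref{lemma.DRACKN implies ETF}: an element of $\bbC\calG$ vanishes at every nontrivial character iff it is a scalar multiple of $\bfone(z)$. For (i) $\Leftrightarrow$ (ii), each $\bfPhi(\gamma)$ satisfies $\abs{\bfPhi(\gamma)}^2 = \bfX$, so by Theorem~\ref{theorem.phased BIBD ETFs} and Lemma~\ref{lemma.tight for subspace} the columns of $\bfPhi(\gamma)$ form an ETF for their span iff $\bfPhi(\gamma)\bfPhi^*(\gamma)\bfPhi(\gamma) = (r+k-1)\bfPhi(\gamma)$. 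By the vanishing principle, this holds for every nontrivial $\gamma$ iff $\bfPhi(z)\bfPhi^*(z)\bfPhi(z) - (r+k-1)\bfPhi(z) = \bfone(z)\bfB$ for some scalar matrix $\bfB$; evaluating at the trivial character and using $\bfX^{\rmT}\bfX = (r-1)\bfI + \bfJ$ together with $\bfX\bfJ = k\bfJ$ forces $\bfB = \tfrac{k}{f}(\bfJ - \bfX)$, yielding (ii).

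For (ii) $\Leftrightarrow$ (iii), I would extract entries: as in the proof of Theorem~\ref{theorem.phased BIBD ETFs}, whenever $\bfPhi(z; i, j) \neq 0$ the $(i, j)$-entry of $\bfPhi(z)\bfPhi^*(z)\bfPhi(z)$ simplifies automatically to $(r+k-1)\bfPhi(z; i, j)$ using only the fact that two distinct blocks share at most one vertex, so (ii) carries nontrivial information only at the zero positions, where it becomes (iii). The equivalence (iii) $\Leftrightarrow$ (iv) then follows by counting: whenever $\bfPhi(z; i, j) = 0$, the nonzero summands in (iii) are indexed by the $k$ pairs $(i', j')$ with $\bfPhi(z; i, j') \neq 0$ and the unique $i' \neq i$ with both $\bfPhi(z; i', j), \bfPhi(z; i', j') \neq 0$, and each such summand is a single monomial; so (iii) asserts that these $k$ monomials cover $\calG$ uniformly with multiplicity $k/f$, which is exactly (iv). Any of these (most visibly (iv)) forces $f \mid k$.

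For the DRACKN conclusion, I would left-multiply (ii) by $\bfPhi^*(z)$ and substitute $\bfPhi^*(z)\bfPhi(z) = \bfA(z) + r\bfI$. Using $\bfone(z)\bfPhi^*(z) = \bfone(z)\bfX^{\rmT}$ (since $\bfone(z)$ absorbs every monomial) together with the BIBD identity $\bfX^{\rmT}(\bfJ - \bfX) = (r-1)(\bfJ - \bfI)$, this produces $[\bfA(z)]^2 = (k - r - 1)\bfA(z) + r(k-1)\bfI + \tfrac{k(r-1)}{f}\bfone(z)(\bfJ - \bfI)$, which matches Definition~\ref{definition.abelian DRACKN} after substituting $n = v$, $v - 1 = r(k-1)$, and $c = k(r-1)/f$, so that $n - fc - 2 = k - r - 1$. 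The zero diagonal and monomial off-diagonal structure of $\bfA(z)$ come from elementary BIBD counts: each column of $\bfPhi(z)$ has $r$ nonzero monomial entries, so the diagonal of $\bfPhi^*(z)\bfPhi(z)$ equals $r$, while any two distinct vertices lie in exactly one common block, giving a single monomial at each off-diagonal slot.

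Finally, when $f$ is even the dual group $\Gamma \cong \calG$ contains an element of order~$2$, i.e., a nontrivial character $\gamma$ with $\gamma(g) \in \set{\pm 1}$ for all $g \in \calG$; evaluating $\bfPhi(z)$ at such a $\gamma$ yields a $\set{0, \pm 1}$-valued, hence real, ETF synthesis operator. I expect the main obstacle to be the algebraic bookkeeping in the DRACKN derivation, especially reconciling the several arithmetic identities ($r(k-1) = v - 1$, $n - fc - 2 = k - r - 1$, $\bfX^{\rmT}(\bfJ - \bfX) = (r-1)(\bfJ - \bfI)$) with the collapsing behavior of $\bfone(z)$ against monomial-valued matrices; the rest is a direct application of the character-duality principle already developed in Section~2.
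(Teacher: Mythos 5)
Your proposal is correct and follows essentially the same route as the paper's proof: the character-duality/vanishing argument pinning down the $\bfone(z)\bfB$ correction term for (i)$\Leftrightarrow$(ii), the entrywise reduction to zero positions for (iii), the monomial-counting argument for (iv) and $f\mid k$, the left-multiplication by $\bfPhi^*(z)$ with $\bfone(z)\bfPhi^*(z)=\bfone(z)\bfX^{\rmT}$ for the DRACKN identity, and the order-two character for the real case. All the key identities you cite (including $n-fc-2=k-r-1$ and $\bfX^{\rmT}(\bfJ-\bfX)=(r-1)(\bfJ-\bfI)$) match the paper's computation.
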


\begin{proof}
(i $\Rightarrow$ ii)
Let $\gamma_0$ denote the trivial character of $\calG$.
For any $\gamma\neq\gamma_0$,
applying Lemma~\ref{lemma.tight for subspace} to $\bfPhi(\gamma)$ gives $\bfPhi(\gamma)\bfPhi^*(\gamma)\bfPhi(\gamma)=a_\gamma\bfPhi(\gamma)$ for some $a_\gamma=\frac{vr}{d_\gamma}$,
where $d_\gamma$ is the rank of $\bfPhi(\gamma)$.
From Theorem~\ref{theorem.phased BIBD ETFs}, we know $d_\gamma=d=\frac{vr}{r+k-1}$ and so $a_\gamma=r+k-1$ for all $\gamma\neq\gamma_0$.
As such, evaluating $\bfPhi(z)\bfPhi^*(z)\bfPhi(z)-(r+k-1)\bfPhi(z)$ at any $\gamma\neq\gamma_0$ gives $\bfzero$.
Now note that a $z$-transform $\bfx(z)$ satisfies $\bfx(\gamma)=0$ for all $\gamma\neq\gamma_0$ precisely when $\bfx(z)$ is a scalar multiple of the geometric sum $\bfone(z)$.
This is because the characters form an orthonormal basis for $\bbC^\calG$, and so $\ip{\gamma}{\bfx}=0$ for all $\gamma\neq\gamma_0$ if and only if $\bfx$ is a scalar multiple of $\gamma_0$.
As such, there exists a matrix $\bfX_0$ with scalar (constant polynomial) entries such that
$\bfPhi(z)\bfPhi^*(z)\bfPhi(z)=(r+k-1)\bfPhi(z)+\bfone(z)\bfX_0$.
Here, $\bfX_0$ is uniquely determined by evaluating this equation at $\gamma_0$:
since $\bfPhi(\gamma_0)=\bfX$ while $\bfone(\gamma_0)=f$, \eqref{equation.BIBD properties} gives
\begin{equation*}
(r+k-1)\bfX+f\bfX_0
=\bfX\bfX^\rmT\bfX
=\bfX[(r-1)\bfI+\bfJ]
=(r-1)\bfX+k\bfJ,
\end{equation*}
namely that $\bfX_0=\frac kf(\bfJ-\bfX)$.
Thus $\bfPhi(z)\bfPhi^*(z)\bfPhi(z)=(r+k-1)\bfPhi(z)+\tfrac kf\bfone(z)(\bfJ-\bfX)$, as claimed.

(ii $\Rightarrow$ iii)
For any $(i,j)$ such that $\bfPhi(z;i,j)=0$, simply compute the $(i,j)$th entry of (ii).

(iii $\Rightarrow$ i)
For any $\gamma\neq\gamma_0$, $\bfone(\gamma)=0$ and so $\bfPhi=\bfPhi(\gamma)$ is a phased BIBD which satisfies~\eqref{equation.phased BIBD ETF condition},
and so Theorem~\ref{theorem.phased BIBD ETFs} gives that its columns form an ETF for their span.

(iii $\Leftrightarrow$ iv)
Since $\abs{\bfPhi(z)}^2$ is a $\BIBD(v,k,1)$, for any $(i,j)$ such that $\bfPhi(i,j)=0$, there are exactly $k$ choices of $(i',j')$ such that $\bfPhi(z;i,j')\overline{\bfPhi(z;i',j')}\bfPhi(z;i',j)$ is nonzero:
there are $k$ choices of $j'$ such that $\bfPhi(z;i,j')\neq0$,
and for each there is a unique choice of $i'$ such that $\bfPhi(i',j)\neq0\neq\bfPhi(i',j')$.
Moreover, each $\bfPhi(z;i,j')\overline{\bfPhi(z;i',j')}\bfPhi(z;i',j)$ is a monomial $z^g$ over our group of order $f$, being the product of three monomials.
As such, (iii) holds precisely when each monomial $z^g$ appears as one of the $k$ nonzero values  $\bfPhi(z;i,j')\overline{\bfPhi(z;i',j')}\bfPhi(z;i',j)$ exactly $\frac kf$ times, namely (iv).

Now assume (i)--(iv) hold.
Note $f$ divides $k$ since $\frac kf$ is the cardinality of the sets in (iv).
To show $\bfA(z)=\bfPhi^*(z)\bfPhi(z)-r\bfI$ is an abelian DRACKN, note that $\bfA(z)$ is self-adjoint.
Moreover, the BIBD structure of $\bfX$ implies that the diagonal entries of $\bfA(z)$ are zero while its off-diagonal entries are monomials, being the product of two monomials.
What remains to be shown is that $\bfA(z)$ satisfies the quadratic-like equation given in Definition~\ref{definition.abelian DRACKN}.
To do so, note that multiplying (ii) by $\bfPhi^*(z)$ gives
\begin{equation*}
[\bfPhi^*(z)\bfPhi(z)]^2=(r+k-1)\bfPhi^*(z)\bfPhi(z)+\tfrac kf\bfone(z)\bfPhi^*(z)(\bfJ-\bfX).
\end{equation*}
To simplify, note that for any formal polynomial $\bfx(z)$ over $\calG$, $\bfx(z)\bfone(z)=[\sum_{g\in\calG}\bfx(g)]\bfone(z)$.
Thus, $\bfone(z)\bfPhi^*(z)=\bfone(z)\bfX^\rmT$.
Since~\eqref{equation.BIBD properties} gives $\bfX^\rmT(\bfJ-\bfX)=r\bfJ-[(r-1)\bfI+\bfJ]=(r-1)(\bfJ-\bfI)$, the above equation simplifies to
\begin{equation*}
[\bfPhi^*(z)\bfPhi(z)]^2=(r+k-1)\bfPhi^*(z)\bfPhi(z)+\tfrac{k(r-1)}f\bfone(z)(\bfJ-\bfI).
\end{equation*}
Replacing $\bfPhi^*(z)\bfPhi(z)$ with $\bfA(z)+r\bfI$ and simplifying then gives
\begin{equation*}
[\bfA(z)]^2
=-(r-k+1)\bfA(z)+(v-1)\bfI+\tfrac{k(r-1)}f\bfone(z)(\bfJ-\bfI).
\end{equation*}
Letting $c=\tfrac{k(r-1)}f$ and comparing the above expression to Definition~\ref{definition.abelian DRACKN},
we observe that
\begin{equation}
\label{equation.polphase BIBD ETF delta}
n-fc-2=(v-1)-k(r-1)-1=r(k-1)-k(r-1)-1=-(r-k+1),
\end{equation}
and so conclude that $\bfA(z)$ is a $(v,f,c)$-DRACKN.
For the final conclusion, note that if $f$ is even, then $\calG$ has a real-valued character $\gamma$,
meaning $\bfPhi(\gamma)$ is real.
\end{proof}

We remark that the DRACKN parameter $\delta=n-fc-2=-(r+k+1)$ computed in~\eqref{equation.polphase BIBD ETF delta} appears frequently in~\cite{GodsilH92,CoutinkhoGSZ16}.
In particular, as summarized in Lemma~\ref{lemma.DRACKN implies ETF}, $\bfA(\gamma)$ is the signature matrix of an ETF for a space whose dimension is determined by $n=v$ and $\delta$:
\begin{equation*}
d
=\frac v2\biggl[1-\frac{\delta}{\sqrt{\delta^2+4(v-1)}}\biggr]
=\frac v2\biggl[1+\frac{r-k+1}{r+k-1}\biggr]
=\frac{vr}{r+k-1}.
\end{equation*}
This expression for $d$ also follows from Theorem~\ref{theorem.phased BIBD ETFs} since $\bfA(\gamma)$ is the signature matrix of the phased BIBD ETF $\bfPhi(\gamma)$.

Many of the known constructions of DRACKNs have $\delta\in\set{-2,0,2}$~\cite{GodsilH92}.
For DRACKNs produced by Theorem~\ref{theorem.polyphase BIBD ETF},
recall Fisher's inequality gives $r\geq k$ and so $\delta=-(r-k+1)$ is negative.
In fact, as explained in the proof of Theorem~\ref{theorem.phased BIBD necessary conditions}, the only phased $\BIBD(v,k,1)$ ETFs with $r=k$ have $(v,k)=(3,2)$, such as~\eqref{equation.3x3 phased BIBD ETF}.
Thus, excluding this case, the DRACKNs produced by polyphase BIBD ETFs have $\delta\leq -2$.
Moreover, $\delta=-2$ precisely when $r=k+1$, namely when Theorem~\ref{theorem.phased BIBD necessary conditions}(d) applies to $\bfPhi(\gamma)$ for all nontrivial $\gamma$.

Some of the deepest results of~\cite{CoutinkhoGSZ16} provide necessary conditions on when an ETF arising from an abelian DRACKN can possibly achieve the real or complex Gerzon bounds.
In particular, since polyphase BIBD ETFs have $\delta<0$, Theorem~6.5 and Corollary~6.7 of~\cite{CoutinkhoGSZ16} imply that any polyphase BIBD ETF for which $v=(v-d)^2$ necessarily has $(v,f,c)=(9,3,3)$, namely $(v,k,f)=(9,3,3)$.
That is, Example~\ref{example.12x9 phased BIBD ETF} is essentially the only polyphase BIBD ETF whose Naimark complements achieve the complex Gerzon bound.
Meanwhile, Theorems~6.5 and~6.6 of~\cite{CoutinkhoGSZ16} imply that any $(v,k,f)$-polyphase BIBD ETF for which $v=\binom{v-d+1}2$ and $f>2$ necessarily has $(v,f,c)=(28,4,8)$, namely $(v,k,f)=(28,4,4)$;
later on, we show how to explicitly construct such a polyphase BIBD ETF over the group $\calG=\bbZ_4$.
We note that~\cite{CoutinkhoGSZ16} also gives necessary conditions on DRACKNs with $\delta>0$ that attain the Gerzon bounds, and these conditions are much less restrictive than those given in the $\delta<0$ case.

We also remark that from the perspective of~\cite{GodsilH92,CoutinkhoGSZ16},
a polyphase $\BIBD(v,k,1)$ ETF $\bfPhi(z)$ can be naturally interpreted as an ``abelian cover" of that BIBD.
To elaborate, under the isomorphism~\eqref{equation.definition of x(T)} between polynomials and circulant matrices over $\calG$, the filter bank $\bfY=\bfPhi(\bfT)$ is a $b\times v$ array of $f\times f$ blocks.
When $\bfPhi(z;i,j)=0$, the corresponding $(i,j)$th block of $\bfY$ is $\bfY(i,j)=\bfzero_f$.
When $\bfPhi(z;i,j)=z^g$, $\bfY(i,j)=\bfT^g$ is a circulant permutation matrix, namely a ``perfect matching" between the vertices in the $i$th ``vertex fiber" and the $j$th ``block fiber."
This implies $\bfY$ is a $bf\times vf$ incidence matrix.
Moreover, $\bfone(z)$ becomes $\bfone(\bfT)=\bfJ_f$ under~\eqref{equation.definition of x(T)},
and the characterization of Theorem~\ref{theorem.polyphase BIBD ETF}(ii) becomes
\begin{equation*}
\bfY\bfY^\rmT\bfY=(r+k-1)\bfY+\tfrac kf(\bfJ_{b\times v}-\bfX)\otimes\bfJ_f.
\end{equation*}
This implies that if a given vertex does not lie on a given block nor on any other block in its fiber then there are exactly $\frac kf$ projections from that vertex onto that block.
As we now explain, this means that polyphase BIBD ETFs with $f=k$ are closely related to combinatorial designs known as generalized quadrangles.

\subsection{Polyphase BIBD ETFs from abelian generalized quadrangles}
\label{subsection.GQ}

Given positive integers $s$ and $t$, a corresponding \textit{generalized quadrangle} $\GQ(s,t)$ is a set of vertices and a set of blocks (subsets of the vertex set) such that:
\begin{enumerate}
\romani
\item every block contains exactly $s+1$ vertices,
\item every vertex is contained in exactly $t+1$ blocks,
\item two distinct blocks intersect in at most one vertex,
\item two distinct vertices are contained in at most one block,
\item if vertex $i$ does not lie in block $j$, then there exists a unique $(i',j')$ such that vertex $i'$ is contained in block $j$ and $j'$ while block $j'$ contains both vertex $i$ and $i'$.
\end{enumerate}
In particular, the first two axioms mean the GQ's incidence matrix $\bfZ$ satisfies $\bfZ\bfone=(s+1)\bfone$ and $\bfZ^\rmT\bfone=(t+1)\bfone$.
The next two axioms state that all off-diagonal entries of $\bfZ\bfZ^\rmT$ and $\bfZ^\rmT\bfZ$ are $\set{0,1}$-valued.
The final axiom means every vertex not on a block has a unique ``projection" onto that block.
This implies a GQ contains no triangles.
To express (v) in terms of $\bfZ$, note
\begin{equation*}
(\bfZ\bfZ^\rmT\bfZ)(i,j)=\sum_{i'}\sum_{j'}\bfZ(i,j')\bfZ(i',j')\bfZ(i',j)
\end{equation*}
counts the number of vertex-block pairs $(i',j')$ such that $\bfZ(i,j')=\bfZ(i',j')=\bfZ(i',j)=1$.
Thus, (v) states that if $\bfZ(i,j)=0$ then $(\bfZ\bfZ^\rmT\bfZ)(i,j)=1$.
If we instead have $\bfZ(i,j)=1$, the first four axioms imply there are exactly $s+t+1$ choices of $(i',j')$ such that $\bfZ(i,j')=\bfZ(i',j')=\bfZ(i',j)=1$,
namely those $s+1$ choices of $(i,j')$ such that $\bfZ(i,j')=1$ and those $t+1$ choices of $(i',j)$ such that $\bfZ(i,j')$, both of which include $(i',j')=(i,j)$.
Overall, we see that a $\GQ(s,t)$ is equivalent to an incidence matrix $\bfZ$ that satisfies
\begin{align}
\label{equation.definition of GQ 1}
&\bfZ\bfone=(s+1)\bfone,\quad\bfZ^\rmT\bfone=(t+1)\bfone,\ \forall i,j,\\
\label{equation.definition of GQ 2}
&(\bfZ\bfZ^\rmT)(i,i'),\,(\bfZ^\rmT\bfZ)(j,j')\in\set{0,1},\ \forall i\neq i', j\neq j',\\
\label{equation.definition of GQ 3}
&\bfZ\bfZ^\rmT\bfZ=(s+t)\bfZ+\bfJ.
\end{align}

This formulation leads to several well-known results about GQs that we will need later on.
For example, we see that the dual of a $\GQ(s,t)$ (obtained by transposing $\bfZ$) is a $\GQ(t,s)$.
Also, letting $\bfZ$ be $b\times v$, multiplying $\bfZ\bfZ^\rmT\bfZ=(s+t)\bfZ+\bfJ$ by $\bfone$ gives $v=(s+1)(st+1)$,
at which point the dual result gives $b=(t+1)(st+1)$.
Next, \eqref{equation.definition of GQ 1} and~\eqref{equation.definition of GQ 3} immediately gives
\begin{equation}
\label{equation.square of GQ Gram}
(\bfZ^\rmT\bfZ)^2
=(s+t)\bfZ^\rmT\bfZ+(t+1)\bfJ,
\end{equation}
implying the adjacency matrix $\bfZ^\rmT\bfZ-(t+1)\bfI$ of its \textit{collinearity graph} is an SRG~\eqref{equation.definition of SRG} with parameters
\begin{equation}
\label{equation.DRACKN SRG parameters}
(v,k,\lambda,\mu)=\bigl((s+1)(st+1),s(t+1),s-1,t+1\bigr).
\end{equation}
One of the eigenvalues of this adjacency matrix thus has multiplicity \smash{$\frac1{s+t}st(s+1)(t+1)$}, and so $s+t$ necessarily divides $st(s+1)(t+1)$.
See~\cite{Payne07,PayneT09} for more necessary conditions on GQ parameters, such as the fact that $s\leq t^2$ when $t\neq1$ and $t\leq s^2$ when $s\neq1$.

In light of these restrictions, it is not surprising that relatively few examples of GQs are known:
they are only known to exist for $(s,t)$ or $(t,s)$ of the form
\begin{equation}
\label{equation.parameters of known GQ}
(1,r),
\quad
(q,q),
\quad
(q,q^2),
\quad
(q^2,q^3),
\quad
(q-1,q+1)
\end{equation}
for any $r\geq 1$ and any prime power $q$~\cite{Payne07}.
These known constructions all involve special algebro-combinatorial structures over finite fields.
The existence of $\GQ(s,t)$ remains unresolved for many values of $(s,t)$, such as $(s,t)=(4,11)$.

In certain GQs, there is a collection of blocks that partition the vertex set.
Such a collection is called a \textit{spread}~\cite{deBruyn02}.
Since any spread necessarily consists of $st+1$ disjoint blocks,
this happens precisely when the GQ has an incidence matrix (and resulting collinearity adjacency matrix) of the form
\begin{equation*}
\bfZ
=\left[\begin{array}{cc}\bfI_{st+1}^{}\otimes\bfone_{s+1}^\rmT\\\bfY\end{array}\right],
\qquad
\bfZ^\rmT\bfZ-(t+1)\bfI
=(\bfY^\rmT\bfY^{}-t\bfI)+\bfI_{st+1}\otimes(\bfJ_{s+1}-\bfI_{s+1}).
\end{equation*}
The corresponding SRG can thus be partitioned into cliques of size $s+1$.
As noted in~\cite{Brouwer84,GodsilH92},
deleting the edges in these cliques produces the adjacency matrix $\bfY^\rmT\bfY^{}-t\bfI$ of a DRACKN with parameters $(n,f,c)=(st+1,s+1,t-1)$.
As we now explain, under certain conditions this DRACKN is guaranteed to be abelian and thus yields ETFs.
In fact, each of these GQs corresponds to a $(v,k,f)$-polyphase BIBD ETF with the special property that $f=k$.

\begin{theorem}
\label{theorem.abelian GQ}
If $\bfPhi(z)$ is a $(v,k,k)$-polyphase BIBD ETF (Definition~\ref{definition.polyphase BIBD ETF}) then
\begin{equation}
\label{equation.GQ in terms of polyphase}
\bfZ
=\left[\begin{array}{c}\bfI_v^{}\otimes\bfone_k^\rmT\\\bfPhi(\bfT)\end{array}\right]
\end{equation}
is the incidence matrix of a $\GQ(k-1,r)$ that contains a spread where $r=\frac{v-1}{k-1}$.
Conversely, if
\begin{equation}
\label{equation.polyphase in terms of GQ}
\bfZ
=\left[\begin{array}{c}\bfI_{st+1}^{}\otimes\bfone_{s+1}^\rmT\\\bfY\end{array}\right]
\end{equation}
is the incidence matrix of a $\GQ(s,t)$ that contains a spread,
and there exists an abelian group $\calG$ of order $s+1$ such that $\bfY$ is a \smash{$\frac{t(st+1)}{s+1}\times(st+1)$} array of $(s+1)\times(s+1)$ of blocks that are either a $\calG$-circulant permutation matrix or the zero matrix,
then there exists an $(st+1,s+1,s+1)$-polyphase BIBD ETF $\bfPhi(z)$ such that $\bfY=\bfPhi(\bfT)$.

In this case, the resulting ETFs $\bfPhi(\gamma)$ have parameters
\begin{equation}
\label{equation.parameters of abelian GQ ETF}
d=\frac{t(st+1)}{s+t},
\quad
n=st+1,
\quad
n-d=\frac{s(st+1)}{s+t}.
\end{equation}
Moreover, when $s+1$ is even, $\gamma$ can be chosen so that $\bfPhi(\gamma)$ is a real ETF.

Also, $\bfPhi^*(z)\bfPhi(z)-t\bfI$ is an abelian $(st+1,s+1,t-1)$-DRACKN.
\end{theorem}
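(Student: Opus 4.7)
The plan is to go through both directions using a dictionary between the polyphase matrix $\bfPhi(z)$ and the filter-bank matrix $\bfY := \bfPhi(\bfT)$, a $bk \times vk$ array of $k\times k$ blocks each of which is either zero or a $\calG$-circulant permutation. Letting $\bfA := \bfI_v \otimes \bfone_k^\rmT$ and $\bfX := \abs{\bfPhi(z)}^2$, I record the reductions $\bfA\bfA^\rmT = k\bfI_v$ and $\bfA^\rmT\bfA = \bfI_v \otimes \bfJ_k$, and the two key identities
\begin{equation*}
\bfA\bfY^\rmT\bfY = (\bfX^\rmT\bfX) \otimes \bfone_k^\rmT,
\qquad
\bfY\bfA^\rmT\bfA = \bfX \otimes \bfJ_k,
\end{equation*}
which follow from the permutation structure, since every nonzero block of $\bfY$ has all row and column sums equal to $1$. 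These reduce block-level computations to scalar identities in $\bfX$ and are the workhorse of both directions.

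For the forward direction, GQ axioms~(i) and~(ii) for $\bfZ$ in \eqref{equation.GQ in terms of polyphase} are immediate from the BIBD row sums $\bfX\bfone = k\bfone$ and column sums $\bfone^\rmT\bfX = r\bfone^\rmT$. Axioms~(iii) and~(iv) follow by a short case analysis: two rows of $\bfA$ are orthogonal; a row of $\bfA$ and a row of $\bfY$ share at most one $1$ because any permutation block of $\bfY$ contributes at most one $1$ to a given row; two rows of $\bfY$ from the same BIBD block are disjointly supported, while two rows of $\bfY$ from distinct BIBD blocks share at most one $1$ because distinct BIBD blocks share at most one vertex. A dual argument handles $\bfZ^\rmT\bfZ$. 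For the cubic identity $\bfZ\bfZ^\rmT\bfZ = (s+t)\bfZ + \bfJ$ that encodes axiom~(v), I compute stripe by stripe: the top stripe is $k\bfA + (\bfX^\rmT\bfX)\otimes\bfone_k^\rmT = (r+k-1)\bfA + \bfJ_{v,vk}$ using $\bfX^\rmT\bfX = (r-1)\bfI + \bfJ$, while applying $z \mapsto \bfT$ to characterization~(ii) of Theorem~\ref{theorem.polyphase BIBD ETF} (with $f = k$) gives $\bfY\bfY^\rmT\bfY = (r+k-1)\bfY + (\bfJ - \bfX)\otimes\bfJ_k$, so the bottom stripe evaluates to $\bfX\otimes\bfJ_k + \bfY\bfY^\rmT\bfY = (r+k-1)\bfY + \bfJ_{bk,vk}$. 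Since $s+t = r+k-1$, both stripes match $(s+t)\bfZ + \bfJ$, and the spread is visibly the top block $\bfA$.

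For the converse, I run the same computation backwards. From GQ axioms~(i) and~(ii) one reads off $\bfX\bfone = (s+1)\bfone$ and $\bfone^\rmT\bfX = t\bfone^\rmT$. The top stripe of the GQ cubic identity forces $(\bfX^\rmT\bfX)\otimes\bfone_k^\rmT = (t-1)\bfI_v\otimes\bfone_k^\rmT + \bfJ_v\otimes\bfone_k^\rmT$, hence $\bfX^\rmT\bfX = (t-1)\bfI + \bfJ$, so $\bfX$ is the incidence matrix of a $\BIBD(st+1,s+1,1)$. Subtracting $\bfY\bfA^\rmT\bfA = \bfX\otimes\bfJ_k$ from the bottom stripe yields $\bfY\bfY^\rmT\bfY = (r+k-1)\bfY + (\bfJ-\bfX)\otimes\bfJ_k$, which is the filter-bank image of Theorem~\ref{theorem.polyphase BIBD ETF}(ii); hence $\bfPhi(z)$ is a $(st+1,s+1,s+1)$-polyphase BIBD ETF. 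The parameters in \eqref{equation.parameters of abelian GQ ETF} follow from $d = vr/(r+k-1)$ of Theorem~\ref{theorem.phased BIBD ETFs} with $v = st+1$, $k = s+1$, $r = t$, and the real-ETF statement when $s+1$ is even together with the abelian $(st+1,s+1,t-1)$-DRACKN conclusion (noting $c = k(r-1)/f = t-1$) are immediate from the corresponding assertions of Theorem~\ref{theorem.polyphase BIBD ETF}. The main obstacle is simply keeping the block indexing straight; once the two tensor identities above are in hand, both directions reduce to rewriting the same cubic equation in different guises.
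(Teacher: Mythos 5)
Your proposal is correct, and the forward direction follows the paper's proof essentially verbatim (same tensor identities, same stripe-by-stripe verification of the cubic). The converse, however, takes a genuinely different route at its one nontrivial step: establishing that $\bfX=\abs{\bfPhi(z)}^2$ is the incidence matrix of a $\BIBD(st+1,s+1,1)$, i.e.\ that any two distinct columns of $\bfX$ share exactly one row of common support. The paper gets this by an argument it attributes to Brouwer: the collinearity graph $\bfZ^\rmT\bfZ-(t+1)\bfI$ is an SRG partitioned into $st+1$ cliques of size $s+1$, a parameter count shows each vertex has exactly one neighbor in every clique it does not belong to, hence each inter-fiber block $\bfY_j^\rmT\bfY_{j'}$ is a permutation matrix, and since a permutation matrix cannot be a sum of two or more permutation matrices exactly one block-row supports both fibers. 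You instead read the identity $\bfX^\rmT\bfX=(t-1)\bfI+\bfJ$ directly off the top stripe of the GQ cubic $\bfZ\bfZ^\rmT\bfZ=(s+t)\bfZ+\bfJ$ via $(\bfI_v^{}\otimes\bfone_k^\rmT)\bfY^\rmT\bfY=(\bfX^\rmT\bfX)\otimes\bfone_k^\rmT$, which is legitimate because that identity needs only the block-permutation structure of $\bfY$ (not the BIBD property), and cancelling $\otimes\,\bfone_k^\rmT$ is injective. This is shorter and makes the two directions symmetric — both reduce to the same cubic — whereas the paper's detour through the SRG and its clique partition makes explicit the combinatorial picture (spread removal, the permutation matrices $\bfY_j^\rmT\bfY_{j'}$, the DRACKN connection) that it exploits elsewhere. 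The remaining conclusions (the parameters \eqref{equation.parameters of abelian GQ ETF}, the real character when $s+1$ is even, and the $(st+1,s+1,t-1)$-DRACKN) are, as you say, immediate from Theorems~\ref{theorem.phased BIBD ETFs} and~\ref{theorem.polyphase BIBD ETF}, exactly as in the paper.
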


\begin{proof}
($\Rightarrow$)
Let $\bfPhi(z)$ be a $(v,k,k)$-polyphase BIBD ETF over some abelian group $\calG$ of order $k$.
The matrix $\bfPhi(\bfT)$ is obtained by identifying each entry of $\bfPhi(z)$ with a $\calG$-circulant matrix according to the isomorphism~\eqref{equation.definition of x(T)}.
Thus, $\bfPhi(\bfT)$ is a $b\times v$ array of blocks of size $k\times k$, where each block is either zero or a $\calG$-circulant permutation matrix.
In particular, the matrix $\bfZ$ given in~\eqref{equation.GQ in terms of polyphase} is a well-defined incidence matrix.
Moreover, since $\bfX:=\abs{\bfPhi(z)}^2$ is the incidence matrix of a $\BIBD(v,k,1)$,
each row of $\bfPhi(\bfT)$ contains exactly $k$ ones,
while each column of $\bfPhi(\bfT)$ contains exactly $r$ ones.
Letting $s=k-1$ and $t=r$,
this implies the rows and columns of $\bfZ$ contain exactly $s+1$ ones and $t+1$ ones, respectively.
That is, $\bfZ$ satisfies~\eqref{equation.definition of GQ 1}.

We next show $\bfZ$ also satisfies~\eqref{equation.definition of GQ 2}.
To do so, note
\begin{equation}
\label{equation.proof of abelian GQ 1}
\bfZ\bfZ^\rmT
=\left[\begin{array}{c}\bfI_v^{}\otimes\bfone_k^\rmT\\\bfPhi(\bfT)\end{array}\right]\left[\begin{array}{cc}\bfI_v^{}\otimes\bfone_k^{}&\bfPhi^*(\bfT)\end{array}\right]
=\left[\begin{array}{cc}k\bfI_v&(\bfI_v^{}\otimes\bfone_k^\rmT)\bfPhi^*(\bfT)\\\bfPhi(\bfT)(\bfI_v^{}\otimes\bfone_k^{})&\bfPhi(\bfT)\bfPhi^*(\bfT)\end{array}\right].
\end{equation}
Since $\bfX$ is the incidence matrix of a $\BIBD(v,k,1)$, any two of its blocks intersect in at most one vertex.
This means that the off-diagonal entries of $\bfPhi(z)\bfPhi^*(z)$ are either monomials or zero while its diagonal entries are $k$.
Thus, $\bfPhi(\bfT)\bfPhi^*(\bfT)$ is a block matrix whose diagonal $k\times k$ blocks are $k\bfI$ and whose off-diagonal blocks are either permutation matrices or zero.
In particular, the off-diagonal entries of the lower-right term in~\eqref{equation.proof of abelian GQ 1} are $\set{0,1}$-valued.
Meanwhile, the lower-left term in~\eqref{equation.proof of abelian GQ 1} is a $(b\times v)\times(k\times 1)$ block matrix whose $(i,j)$th block is
\begin{equation*}
[\bfPhi(\bfT)(\bfI_v^{}\otimes\bfone_k^{})](i,j)
=\sum_{j'=1}^{v}\bfPhi(\bfT;i,j')(\bfI_v^{}\otimes\bfone_k^{})(j',j)
=\bfPhi(\bfT;i,j)\bfone_k^{},
\end{equation*}
which is either $\bfone_k$ or $\bfzero_k$ depending on whether $\bfPhi(\bfT;i,j)$ is a permutation matrix or is the zero matrix.
This means
\begin{equation}
\label{equation.proof of abelian GQ 2}
\bfPhi(\bfT)(\bfI_v^{}\otimes\bfone_k^{})=\bfX\otimes\bfone_k^{},
\end{equation}
which is $\set{0,1}$-valued.
Thus, $\bfPhi(\bfT)$ satisfies the first half of~\eqref{equation.definition of GQ 2}.
For the second half of~\eqref{equation.definition of GQ 2},
note that since $\bfX$ is a $\BIBD(v,k,1)$, the off-diagonal entries of $\bfPhi^*(z)\bfPhi(z)$ are monomials while its diagonal entries are $r$.
Thus, $\bfPhi^*(\bfT)\bfPhi(\bfT)$ is a block matrix whose diagonal $k\times k$ blocks are $r\bfI$ and whose off-diagonal blocks are permutation matrices.
In particular, the off-diagonal entries of
\begin{equation}
\label{equation.proof of abelian GQ 3}
\bfZ^\rmT\bfZ
=\left[\begin{array}{cc}\bfI_v^{}\otimes\bfone_k^{}&\bfPhi^*(\bfT)\end{array}\right]\left[\begin{array}{c}\bfI_v^{}\otimes\bfone_k^\rmT\\\bfPhi(\bfT)\end{array}\right]
=\bfI_v\otimes\bfJ_k+\bfPhi^*(\bfT)\bfPhi(\bfT)
\end{equation}
are $\set{0,1}$-valued.

Having~\eqref{equation.definition of GQ 1} and~\eqref{equation.definition of GQ 2} we turn to~\eqref{equation.definition of GQ 3}.
Multiplying~\eqref{equation.proof of abelian GQ 3} by $\bfZ$ gives
\begin{equation}
\label{equation.proof of abelian GQ 4}
\bfZ\bfZ^\rmT\bfZ
=\left[\begin{array}{c}\bfI_v^{}\otimes\bfone_k^\rmT\\\bfPhi(\bfT)\end{array}\right]\bigl\{\bfI_v\otimes\bfJ_k+\bfPhi^*(\bfT)\bfPhi(\bfT)\bigr\}
=\left[\begin{array}{r}
(\bfI_v^{}\otimes\bfone_k^\rmT)\bigl[\bfI_v\otimes\bfJ_k+\bfPhi^*(\bfT)\bfPhi(\bfT)\bigr]\smallskip\\
\bfPhi(\bfT)\bigl[\bfI_v\otimes\bfJ_k+\bfPhi^*(\bfT)\bfPhi(\bfT)\bigr]
\end{array}\right].
\end{equation}
Both the top and bottom terms of~\eqref{equation.proof of abelian GQ 4} can be simplified with~\eqref{equation.proof of abelian GQ 2}.
Specifically,
\begin{align*}
(\bfI_v^{}\otimes\bfone_k^\rmT)\bigl[\bfI_v\otimes\bfJ_k+\bfPhi^*(\bfT)\bfPhi(\bfT)\bigr]
&=k(\bfI_v^{}\otimes\bfone_k^\rmT)+\bigl[\bfPhi(\bfT)(\bfI_v^{}\otimes\bfone_k^{})\bigr]^\rmT\bfPhi(\bfT)\\
&=k(\bfI_v^{}\otimes\bfone_k^\rmT)+(\bfX\otimes\bfone_k^{})^\rmT\bfPhi(\bfT).
\end{align*}
To continue simplifying this term, note the $(j,j')$th block of size $1\times k$ of $(\bfX\otimes\bfone_k^{})^\rmT\bfPhi(\bfT)$ is
\begin{equation*}
[(\bfX\otimes\bfone_k^{})^\rmT\bfPhi(\bfT)](j,j')
=\sum_{i=1}^{b}\bfX(i,j)\bfone_k^{\rmT}\bfPhi(\bfT;i,j')
=\sum_{i=1}^{b}\bfX(i,j)\bfX(i,j')\bfone_k^{\rmT}
=(\bfX^\rmT\bfX)(j,j')\bfone_k^{\rmT}.
\end{equation*}
Thus, $(\bfX\otimes\bfone_k^{})^\rmT\bfPhi(\bfT)
=(\bfX^\rmT\bfX)\otimes\bfone_k^\rmT
=[(r-1)\bfI_v+\bfJ_v]\otimes\bfone_k^\rmT
=(r-1)\bfI_v\otimes\bfone_k^\rmT+\bfJ_{v\times{vk}}$
and so
\begin{equation}
\label{equation.proof of abelian GQ 5}
(\bfI_v^{}\otimes\bfone_k^\rmT)\bigl[\bfI_v\otimes\bfJ_k+\bfPhi^*(\bfT)\bfPhi(\bfT)\bigr]
=(r+k-1)(\bfI_v^{}\otimes\bfone_k^\rmT)+\bfJ_{v\times{vk}}.
\end{equation}
To simplify the bottom term of~\eqref{equation.proof of abelian GQ 4}, note that since $\bfPhi(z)$ is a $(v,k,k)$-polyphase BIBD ETF,
Theorem~\ref{theorem.polyphase BIBD ETF}.(ii) gives
$\bfPhi(z)\bfPhi^*(z)\bfPhi(z)=(r+k-1)\bfPhi(z)+\bfone(z)(\bfJ-\bfX)$.
Under the isomorphism~\eqref{equation.definition of x(T)}, $\bfone(z)$ becomes $\bfone(\bfT)=\bfJ_k$ and so this equation becomes
\begin{equation*}
\bfPhi(\bfT)\bfPhi^*(\bfT)\bfPhi(\bfT)
=(r+k-1)\bfPhi(\bfT)+(\bfJ_{b\times v}-\bfX)\otimes\bfJ_k.
\end{equation*}
Using this along with~\eqref{equation.proof of abelian GQ 2} we can write the bottom term in~\eqref{equation.proof of abelian GQ 4} as
\begin{align}
\nonumber
\bfPhi(\bfT)\bigl[\bfI_v\otimes\bfJ_k+\bfPhi^*(\bfT)\bfPhi(\bfT)\bigr]
&=\bfPhi(\bfT)(\bfI_v^{}\times\bfone_k^{})(\bfI_v^{}\times\bfone_k^{\rmT})+\bfPhi(\bfT)\bfPhi^*(\bfT)\bfPhi(\bfT)\\
\nonumber
&=\bfX\otimes\bfJ_k+(r+k-1)\bfPhi(\bfT)+(\bfJ_{b\times v}-\bfX)\otimes\bfJ_k\\
\label{equation.proof of abelian GQ 6}
&=(r+k-1)\bfPhi(\bfT)+\bfJ_{bk\times vk}.
\end{align}
Combining~\eqref{equation.proof of abelian GQ 4}, \eqref{equation.proof of abelian GQ 5}, \eqref{equation.proof of abelian GQ 6} and the fact that $r+k-1=s+t$, we see that $\bfZ$ satisfies~\eqref{equation.definition of GQ 3}:
\begin{equation*}
\bfZ\bfZ^\rmT\bfZ
=\left[\begin{array}{l}
(r+k-1)(\bfI_v^{}\otimes\bfone_k^\rmT)+\bfJ_{v\times{vk}}\\
(r+k-1)\bfPhi(\bfT)+\bfJ_{bk\times vk}
\end{array}\right]
=(r+k-1)\left[\begin{array}{c}\bfI_v^{}\otimes\bfone_k^\rmT\\\bfPhi(\bfT)\end{array}\right]+\bfJ
=(r+k-1)\bfZ+\bfJ.
\end{equation*}

($\Leftarrow$)
Now assume we have a $\GQ(s,t)$ whose incidence matrix $\bfZ$ is of form~\eqref{equation.polyphase in terms of GQ} where $\bfY$ is a block matrix whose blocks are either zero or permutation matrices that are circulant with respect to some given abelian group $\calG$ of order $s+1$.
Since $\bfZ$ is $(t+1)(st+1)\times(s+1)(st+1)$, $\bfY$ is $t(st+1)\times(s+1)(st+1)$.
Thus, $\bfY$ is a $b\times v$ array of $k\times k$ blocks where $k=s+1$, $v=st+1$, $r=t$ and
\smash{$b=\frac vkr=\frac{t(st+1)}{s+1}$}.
Let $\bfX$ and $\bfPhi(z)$ be the $b\times v$ incidence matrix and polyphase matrix corresponding to $\bfY$, respectively:
\begin{equation*}
\bfX(i,j)=\left\{\begin{array}{cl}1,&\bfY(i,j)\neq\bfzero,\\0,&\bfY(i,j)=\bfzero,\end{array}\right.
\qquad
\bfPhi(z;i,j)=\left\{\begin{array}{cl}z^g,&\bfY(i,j)=\bfT^g,\\0,&\bfY(i,j)=\bfzero.\end{array}\right.
\end{equation*}
Clearly, the entries of $\bfPhi(z)$ are either monomials or zero and $\abs{\bfPhi(z)}^2=\bfX$.
To show $\bfPhi(z)$ is a $(st+1,s+1,s+1)=(v,k,1)$-polyphase BIBD ETF,
we show that $\bfX$ is the incidence matrix of a $\BIBD(v,k,1)$ and that $\bfPhi(z)$ satisfies Theorem~\ref{theorem.polyphase BIBD ETF}(ii).

To show that $\bfX$ is the incidence matrix of a $\BIBD(v,k,1)$,
note that since any row of $\bfY$ contains exactly $k=s+1$ ones,
each block-row of $\bfY$ contains $k$ permutation matrices,
meaning each row of $\bfX$ contains $k$ ones.
As such, what remains is to show that any two distinct columns of $\bfX$ have exactly one row index of common support.
For any $j=1,\dotsc,st+1$ let $\bfY_j$ denote the $j$th submatrix of $\bfY$ of size $t(st+1)\times(s+1)$.
That is, $\bfY=[\bfY_1 \dotsb \bfY_{st+1}]$.
Under this notation, $\bfZ^\rmT\bfZ$ is an $(st+1)\times(st+1)$ array of blocks of size $(s+1)\times(s+1)$ whose $(j,j')$th block is
\begin{equation}
\label{equation.proof of abelian GQ 7}
(\bfZ^\rmT\bfZ)(j,j')
=\left[\begin{array}{c}\bfdelta_j^{}\otimes\bfone_k^\rmT\\\bfY_j\end{array}\right]^\rmT\left[\begin{array}{c}\bfdelta_{j'}^{}\otimes\bfone_k^\rmT\\\bfY_{j'}\end{array}\right]
=\left\{\begin{array}{cl}t\bfI+\bfJ,&j=j',\\\bfY_j^\rmT\bfY_{j'}^{},&j\neq j'.\end{array}\right.
\end{equation}
By~\eqref{equation.definition of GQ 2}, this implies $\bfY_j^\rmT\bfY_{j'}^{}$ is $\set{0,1}$-valued for any $j\neq j'$.

We claim that \smash{$\bfY_j^\rmT\bfY_{j'}^{}$} is a permutation matrix for all $j\neq j'$.
To prove this claim, we borrow a relevant idea from~\cite{Brouwer84}.
In particular, recall from~\eqref{equation.square of GQ Gram} that $\bfZ^\rmT\bfZ-(t+1)\bfI$ is an SRG~\eqref{equation.definition of SRG} with parameters~\eqref{equation.DRACKN SRG parameters}.
In particular, each of the $(s+1)(st+1)$ vertices of this graph has $s(t+1)$ neighbors,
and any two adjacent vertices have $s-1$ neighbors in common.
Further note that~\eqref{equation.proof of abelian GQ 7} implies this SRG is partitioned into $st+1$ cliques of size $s+1$ corresponding to the diagonal blocks of $\bfZ^\rmT\bfZ$.
Together, these facts imply that if two vertices are in a common clique, then their common neighbors are precisely the remaining $s-1$ vertices in that clique.
That is, any vertex can have at most one neighbor from each of the $st$ cliques it does not belong to.
At the same time, it has $s(t+1)$ neighbors overall, including $s$ from its own clique, meaning each vertex has exactly one neighbor from each of the $st$ cliques to which it does not belong.
In particular, for any $j\neq j'$, each row and column of \smash{$\bfY_j^\rmT\bfY_{j'}^{}$} contains exactly one $1$, meaning it is a permutation matrix, as claimed.

Having this claim, note $\bfY_j$ is a vertical concatenation of the matrices $\set{\bfY(i,j)}_{j=1}^{b}$ which are either permutation matrices or zero.
Moreover, for any $j\neq j'$ the permutation matrix \smash{$\bfY_j^\rmT\bfY_{j'}^{}$} is a sum of products of such matrices
\begin{equation*}
\bfY_j^\rmT\bfY_{j'}
=\sum_{i=1}^{b}\bfY(i,j)\bfY(i,j').
\end{equation*}
Since a permutation matrix cannot be written as a sum of two or more permutation matrices,
this means there is exactly one value of $i=i(j,j')$ such that $\bfY(i,j)\neq\bfzero\neq\bfY(i,j')$.
Thus,
any two distinct columns of $\bfX$ have exactly one index of common support,
meaning it is indeed the incidence matrix of a $\BIBD(v,k,1)$.

To show that $\bfPhi(z)$ satisfies the condition of Theorem~\ref{theorem.polyphase BIBD ETF}(ii),
note that since $v=st+1$, $k=s+1$ and $\bfY=\bfPhi(\bfT)$, our matrix~\eqref{equation.polyphase in terms of GQ} is of form~\eqref{equation.GQ in terms of polyphase} where $\bfX=\abs{\bfPhi(z)}^2$ is the incidence matrix of a $\BIBD(v,k,1)$.
This means that certain facts from the proof of the converse direction,
such as~\eqref{equation.proof of abelian GQ 2} and~\eqref{equation.proof of abelian GQ 4} are also valid here.
In particular, using~\eqref{equation.proof of abelian GQ 4} we can rewrite our assumption~\eqref{equation.definition of GQ 3} as
\begin{equation*}
\left[\begin{array}{r}
(\bfI_v^{}\otimes\bfone_k^\rmT)\bigl[\bfI_v\otimes\bfJ_k+\bfPhi^*(\bfT)\bfPhi(\bfT)\bigr]\smallskip\\
\bfPhi(\bfT)\bigl[\bfI_v\otimes\bfJ_k+\bfPhi^*(\bfT)\bfPhi(\bfT)\bigr]
\end{array}\right]
=\left[\begin{array}{l}(r+k-1)(\bfI_{v}^{}\otimes\bfone_{k}^\rmT)+\bfJ_{v\times vk}\\(r+k-1)\bfPhi(\bfT)+\bfJ_{bk\times vk}\end{array}\right].
\end{equation*}
By~\eqref{equation.proof of abelian GQ 2}, the bottom half of this equation becomes
\begin{equation*}
\bfX\otimes\bfJ_k+\bfPhi(\bfT)\bfPhi^*(\bfT)\bfPhi(\bfT)
=(r+k-1)\bfPhi(\bfT)+\bfJ_{bk\times vk}
=(r+k-1)\bfPhi(\bfT)+\bfJ_{b\times v}\otimes\bfJ_{k},
\end{equation*}
that is,
$\bfPhi(\bfT)\bfPhi^*(\bfT)\bfPhi(\bfT)=(r+k-1)\bfPhi(\bfT)+(\bfJ-\bfX)\otimes\bfJ_k$.
Under the isomorphism~\eqref{equation.definition of x(T)}, $\bfJ_k$ becomes $\bfone(z)$ and this equation becomes $\bfPhi(z)\bfPhi^*(z)\bfPhi(z)=(r+k-1)\bfPhi(z)+\bfone(z)(\bfJ-\bfX)$,
namely the condition of Theorem~\ref{theorem.polyphase BIBD ETF} in the special case where $f=s+1=k$.

For~\eqref{equation.parameters of abelian GQ ETF}, note that in this case,
Theorem~\ref{theorem.polyphase BIBD ETF} gives that $\bfPhi(z)$ is a polyphase BIBD ETF with parameters $(v,k,k)=(st+1,s+1,s+1)$,
meaning $\bfPhi(\gamma)$ is a phased $\BIBD(st+1,s+1,1)$ ETF for any nontrivial character $\gamma$.
By Theorem~\ref{theorem.phased BIBD ETFs}, this means the $st+1$ columns of $\bfPhi(\gamma)$ form an ETF for their span which has dimension
\begin{equation*}
d
=\frac{vr}{r+k-1}
=\frac{t(st+1)}{s+t},
\end{equation*}
which in turn implies $n-d=\frac{s(st+1)}{s+t}$.
Also, if the order $s+1$ of $\calG$ is even, then it has a real-valued character $\gamma$,
meaning in that case $\bfPhi(\gamma)$ is real.

Finally, since $\bfPhi(z)$ is a $(st+1,s+1,s+1)$-polyphase BIBD ETF whose underlying BIBD has $k=s+1$ and $r=t$,
Theorem~\ref{theorem.polyphase BIBD ETF} also immediately gives that $\bfPhi^*(z)\bfPhi(z)-t\bfI$ is an abelian $(st+1,s+1,c)$-DRACKN where $c=\frac{k(r-1)}{f}=\frac{(s+1)(t-1)}{s+1}=t-1$.
\end{proof}

In light of the previous result, we make the following definition:

\begin{definition}
\label{definition.abelian GQ}
We say a generalized quadrangle $\GQ(s,t)$ is \textit{abelian} if it has an incidence matrix of the form~\eqref{equation.GQ in terms of polyphase} where $\bfPhi(z)$ is an $(st+1,s+1,s+1)$-polyphase BIBD ETF,
cf.\ Definition~\ref{definition.polyphase BIBD ETF}.
\end{definition}

Note that in order for a $\GQ(s,t)$ to be abelian,
applying Fisher's inequality to the underlying $\BIBD(st+1,s+1,1)$ gives $s=k-1<r=t$.
This means that of the parameters~\eqref{equation.parameters of known GQ} of known constructions of $\GQ(s,t)$ we should only consider $(s,t)$ of the form $(1,r)$ for some $r\geq2$ or $(q,q^2)$, $(q^2,q^3)$, $(q-1,q+1)$ for some prime power $q$.
Note Theorem~\ref{theorem.abelian GQ} also implies that both $s+1$ and $s+t$ necessarily divide $t(st+1)$.
This is more restrictive than having $s+t$ divide $st(s+1)(t+1)$, which is a known necessary condition on the parameters of any $\GQ(s,t)$.
For example, though there exists a $\GQ(q^2,q^3)$ for a prime power $q$, this GQ cannot be abelian since
\begin{equation*}
\frac{t(st+1)}{s+1}
=\frac{q^3(q^5+1)}{q^2+1}
=q^6-q^4+q^2+q-1-\frac{q-1}{q^2+1}
\end{equation*}
is not an integer for any $q\geq 2$.
Altogether, we see that the only known constructions of $\GQ(s,t)$ that might be abelian are those whose $(s,t)$ parameters are of the form
\begin{equation}
\label{equation.parameters of known abelian GQ}
(1,r),
\quad
(q,q^2),
\quad
(q-1,q+1),
\end{equation}
where $r\geq2$ and $q$ is a prime power.
In the next section, we produce explicit constructions of abelian $\GQ(s,t)$ for all three of these types of parameters.
To be clear, other abelian GQs may exist.
For example, any $\GQ(s,t)$ with $s\geq 3$ and $t=s^2-s-1$ has $s<t$ and
\begin{equation*}
\frac{t(st+1)}{s+1}
=(s-1)^2(s^2-s-1),
\quad
\frac{t(st+1)}{s+t}
=(s-1)(s^2-s-1).
\end{equation*}
However, with the exception of $(s,t)=(3,5)$ (which is constructed by letting $(s,t)=(q-1,q+1)$ when $q=4$), no constructions of $\GQ(s,t)$ with $t=s^2-s-1$ are known.
To the best of our knowledge, the existence of GQs with these parameters is an open question.
For example, the existence of a $\GQ(4,11)$ is open~\cite{Payne07}.
We note that by Theorem~\ref{theorem.abelian GQ}, if a $\GQ(s,s^2-s-1)$ did exist the resulting ETF would have
\begin{equation*}
n=st+1=(s-1)^2(s+1),
\quad
d=(s-1)(s^2-s-1),
\quad
n-d =s(s-1).
\end{equation*}
That is, a Naimark complement of an ETF arising from a $\GQ(s,s^2-s-1)$ has the same parameters as a Steiner ETF arising from an affine plane of order $s-1$.
This suggests we let $q=s-1$ be a prime power and seek an abelian $\GQ(q+1,q^2+q-1)$.
In such a GQ, any block contains $q+1$ vertices.
As such, an abelian $\GQ(q+1,q^2+q-1)$ might lie as a subincidence structure of a projective geometry over the field $\bbF_q$ of $q$ elements.
We leave a deeper investigation of this question for future work.

The above line of reasoning begs the following question:
by Theorem~\ref{theorem.abelian GQ}, the existence of an abelian GQ implies the existence of a phased BIBD ETF;
is the converse ever true?
That is, does the existence of a phased BIBD ETF imply the existence of an abelian GQ?
By borrowing the proof technique of~Theorem 5.1 of~\cite{CoutinkhoGSZ16}, we see the answer to this question is yes:

\begin{theorem}
\label{theorem.ETF implies GQ}
Suppose $\bfPhi$ is a phased $\BIBD(v,k,1)$ ETF whose nonzero entries are $p$th roots of unity where $p$ is prime.
Then $p$ divides $k$ and there exists a $(v,k,p)$-polyphase BIBD ETF $\bfPhi(z)$ over $\bbZ_p$ such that $\bfPhi=\bfPhi(\gamma)$ for some character $\gamma$.

In particular, if $k=p$ then~\eqref{equation.GQ in terms of polyphase} is the incidence matrix of a $\GQ(k-1,r)$ where $r=\frac{v-1}{k-1}$.
\end{theorem}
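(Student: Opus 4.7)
The plan is to lift the phased BIBD ETF $\bfPhi$ to a polyphase matrix $\bfPhi(z)$ over $\bbZ_p$ in the obvious way, and then check that the defining identity of Theorem~\ref{theorem.phased BIBD ETFs} forces the combinatorial distribution property of Theorem~\ref{theorem.polyphase BIBD ETF}(iv). Fix a primitive $p$th root of unity $\omega$, so the nontrivial characters of $\bbZ_p$ are $\gamma_\ell(g)=\omega^{\ell g}$, and fix $\gamma=\gamma_1$. Every nonzero entry of $\bfPhi$ is of the form $\omega^g$ for a unique $g\in\bbZ_p$, so we can \emph{define} the $(i,j)$th entry of $\bfPhi(z)$ to be $z^g$ when $\bfPhi(i,j)=\omega^g$, and $0$ when $\bfPhi(i,j)=0$. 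By construction, $\abs{\bfPhi(z)}^2=\abs{\bfPhi}^2$ is the incidence matrix of the $\BIBD(v,k,1)$, and $\bfPhi(\gamma)=\bfPhi$.

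The heart of the argument is to verify Theorem~\ref{theorem.polyphase BIBD ETF}(iv) for this lift. Fix $(i,j)$ with $\bfPhi(z;i,j)=0$. As noted in the paragraph after Theorem~\ref{theorem.phased BIBD ETFs}, exactly $k$ of the summands $\bfPhi(i,j')\overline{\bfPhi(i',j')}\bfPhi(i',j)$ are nonzero, and each such summand is a product of three $p$th roots of unity, hence is itself a $p$th root of unity. Writing $m_g$ for the number of such summands that equal $\omega^g$, the hypothesis that $\bfPhi$ is a phased BIBD ETF together with~\eqref{equation.phased BIBD ETF condition} gives
\begin{equation*}
\sum_{g=0}^{p-1}m_g\,\omega^g=0,\qquad \sum_{g=0}^{p-1}m_g=k.
\end{equation*}
Since $p$ is prime, the cyclotomic polynomial $1+z+\cdots+z^{p-1}$ is the minimal polynomial of $\omega$ over $\bbQ$, so the only integer relations among $1,\omega,\ldots,\omega^{p-1}$ are multiples of $\sum_g \omega^g=0$. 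Hence $m_0=m_1=\cdots=m_{p-1}$, which forces $p\mid k$ and $m_g=k/p$ for every $g\in\bbZ_p$. Translated to the polyphase lift, this says the multiset $\{\bfPhi(z;i,j')\overline{\bfPhi(z;i',j')}\bfPhi(z;i',j):\bfPhi(z;i,j')\overline{\bfPhi(z;i',j')}\bfPhi(z;i',j)\neq 0\}$ contains each monomial $z^g$ exactly $k/p$ times, which is precisely Theorem~\ref{theorem.polyphase BIBD ETF}(iv) with $f=p$. Consequently $\bfPhi(z)$ is a $(v,k,p)$-polyphase BIBD ETF.

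For the last assertion, when $k=p$ we obtain a $(v,k,k)$-polyphase BIBD ETF, and Theorem~\ref{theorem.abelian GQ} applies directly: the matrix in~\eqref{equation.GQ in terms of polyphase} is the incidence matrix of a $\GQ(k-1,r)$ with $r=\tfrac{v-1}{k-1}$. The only delicate step is the cyclotomic linear-independence argument used to upgrade a vanishing sum of $p$th roots of unity into the exact equidistribution of the associated exponents; once $p$ is assumed prime this is immediate, but it is the step that would fail if $p$ were composite, since then shorter nontrivial integer relations among $p$th roots of unity (coming from proper subgroups) would allow non-uniform $m_g$'s.
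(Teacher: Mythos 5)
Your proposal is correct and follows essentially the same route as the paper's proof: the same lift $\omega^g\mapsto z^g$ to a polyphase matrix over $\bbZ_p$, the same use of the primality of $p$ (via the $p$th cyclotomic polynomial) to convert the vanishing sum in~\eqref{equation.phased BIBD ETF condition} into equidistribution of the $k$ nonzero triple products among the $p$th roots of unity, and the same appeal to Theorem~\ref{theorem.polyphase BIBD ETF}(iv) and Theorem~\ref{theorem.abelian GQ} to conclude. Your closing remark about why the argument would fail for composite $p$ is a nice observation not made explicit in the paper, but the substance of the proof is identical.
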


\begin{proof}
Let $\omega=\exp(\frac{2\pi\rmi}{p})$ and let $\bfPhi(z)$ be the $b\times v$ polyphase matrix whose $(i,j)$th entry is
\begin{equation}
\label{equation.proof of ETF implies GQ 0}
\bfPhi(z;i,j)
=\left\{\begin{array}{cl}z^l,&\bfPhi(i,j)=\omega^l,\\0,&\bfPhi(i,j)=0,\end{array}\right.
\end{equation}
regarded as a polynomial over $\bbZ_p$, that is, as a member of $\bbZ_p[z]/\gen{z^p-1}$.
Note $\abs{\bfPhi(z)}^2=\bfX=\abs{\bfPhi}^2$ is the incidence matrix of a $\BIBD(v,k,1)$.
For any $(i,j)$ such that $\bfPhi(i,j)=0$, \eqref{equation.phased BIBD ETF condition} gives
\begin{equation}
\label{equation.proof of ETF implies GQ 1}
\sum_{i'=1}^{b}\sum_{j'=1}^{v}\bfPhi(i,j')\overline{\bfPhi(i',j')}\bfPhi(i',j)=0.
\end{equation}
As noted in our discussion following the proof of~Theorem~\ref{theorem.phased BIBD ETFs},
only $k$ of these summands are nonzero.
Here, these nonzero summands are all $p$th roots of unity, being the product of three $p$th roots of unity.
Since $p$ is prime, the $p$th cyclotomic polynomial is $\sum_{l=0}^{p-1}z^l$.
This implies that if $\set{c_l}_{l=0}^{p-1}$ are rational numbers such that $\sum_{l=0}^{p-1}c_l\omega^l=0$ then $c_1=\dotsb=c_p$.
In particular, the $k$ nonzero summands of~\eqref{equation.proof of ETF implies GQ 1} consist of $\frac kp$ copies of each $p$th root of unity, that is,
\begin{equation*}
\#\set{(i',j'): \omega^l = \bfPhi(i,j')\overline{\bfPhi(i',j')}\bfPhi(i',j)}=\tfrac kp.
\end{equation*}
By~\eqref{equation.proof of ETF implies GQ 0},
this implies $\#\set{(i',j'): z^l = \bfPhi(z;i,j')\overline{\bfPhi(z;i',j')}\bfPhi(z;i',j)}=\tfrac kp$ is independent of $l$, and so Theorem~\ref{theorem.polyphase BIBD ETF}(iv) gives $\bfPhi(z)$ is a $(v,k,p)$-polyphase BIBD ETF over $\bbZ_p$.
In particular, if $p=k$ then Theorem~\ref{theorem.abelian GQ} implies~\eqref{equation.GQ in terms of polyphase} is the incidence matrix of a $\GQ(k-1,r)$ where $r=\frac{v-1}{k-1}$.
\end{proof}

Combining Theorems~\ref{theorem.abelian GQ} and~\ref{theorem.ETF implies GQ},
we see that if $s+1$ is prime then an abelian $\GQ(s,t)$ exists if and only if there exists a phased $\BIBD(st+1,s+1,1)$ ETF whose nonzero entries are $(s+1)$th roots of unity.
In particular, returning to the line of reasoning given immediately before Theorem~\ref{theorem.ETF implies GQ}, we see that when $s+1$ is prime, an abelian $\GQ(s,s^2-s-1)$ exists if and only if there exists a phased $\BIBD((s-1)^2(s+1),s+1,1)$ ETF whose nonzero entries are $(s+1)$th roots of unity.
This is intriguing since, as mentioned earlier, the existence of $\GQ(s,s^2-s-1)$ seems to be unresolved for all $s>3$~\cite{Payne07}, while $\BIBD((s-1)^2(s+1),s+1,1)$ are known to exist whenever $3\leq s\leq 7$~\cite{MathonR07}.
For example, perhaps a $\GQ(4,11)$ can be obtained by phasing a known example of a $\BIBD(45,5,1)$ with fifth roots of unity.

\section{Explicit constructions of abelian generalized quadrangles}

In this section, we construct abelian generalized quadrangles with parameters~\eqref{equation.parameters of known abelian GQ} for any $r\geq2$ and prime power $q$, cf.\ Definition~\ref{definition.abelian GQ}.
By Theorem~\ref{theorem.abelian GQ}, each of these produces an ETF.

The first of these constructions is an abelian $\GQ(1,r)$, which is a trivial type of GQ known as a \textit{dual grid}~\cite{PayneT09}.
They correspond to trivial ETFs, namely regular simplices.
To be precise, for any $r\geq 2$, an abelian $\GQ(1,r)$ can be obtained by letting $v=r+1$ and applying Theorem~\ref{theorem.ETF implies GQ} to the phased $\BIBD(v,2,1)$ ETF $\bfPhi$ given in Example~\ref{example.regular simplices as phased BIBD ETFs} whose nonzero entries lie in $\set{\pm1}$, namely $p$th roots of unity where $p=2$ is prime.
For example, for the phased BIBD Mercedes-Benz ETF given in~\eqref{equation.3x3 phased BIBD ETF},
the method of Theorem~\ref{theorem.ETF implies GQ} gives the following $(3,2,2)$-polyphase BIBD ETF (whose entries are polynomials in $\bbC[z]/\gen{z^2-1}$) as well as the incidence matrix $\bfZ$ of a $\GQ(1,2)$:
\begin{equation*}
\bfPhi(z)
=\left[\begin{array}{rrr}1&z&0\\1&0&z\\0&1&z\end{array}\right],
\qquad
\bfZ
=\left[\begin{array}{c}\bfI_3^{}\otimes\bfone_2^\rmT\\\bfPhi(\bfT)\end{array}\right]
=\left[\begin{array}{cc|cc|cc}
1&1&0&0&0&0\\
0&0&1&1&0&0\\
0&0&0&0&1&1\\
\hline
1&0&0&1&0&0\\
0&1&1&0&0&0\\
\hline
1&0&0&0&0&1\\
0&1&0&0&1&0\\
\hline
0&0&1&0&0&1\\
0&0&0&1&1&0
\end{array}\right].
\end{equation*}

Constructions of abelian $\GQ(q-1,q+1)$ and $\GQ(q,q^2)$ are far less obvious.
In particular, we were not able to deduce an abelian structure on known constructions of $\GQ(q-1,q+1)$~\cite{Payne07,PayneT09}.
Instead, we construct an abelian $\GQ(q-1,q+1)$ from scratch.

\subsection{Constructing an abelian $\GQ(q-1,q+1)$}

For any prime power $q$, let $\calG$ be the (abelian) additive group of the finite field $\bbF_q$.
Note the $(q^2,q,q)$-polyphase BIBD ETF produced by Theorem~\ref{theorem.abelian GQ} from any abelian $\GQ(q-1,q+1)$ over this group has parameters
\begin{equation}
\label{equation.ETF parameters of abelian GQ(q-1,q+1)}
d=\binom{q+1}{2},
\quad
n=q^2
\quad
n-d=\binom{q}{2}.
\end{equation}
In particular, a Naimark complement of this ETF has the same size as a Steiner ETF arising from a $\BIBD(q,2,1)$.
Such Steiner ETFs are well understood.
Also, the underlying BIBDs have parameters $(v,k,1)=(q^2,q,1)$ and can be constructed, for example, as affine planes over $\bbF_q$.
Together, these facts led us to the $(9,3,3)$-polyphase BIBD ETF over $\bbZ_q$ given in Example~\ref{example.12x9 phased BIBD ETF}, as well as to the following generalization of it over the additive group of $\bbF_q$:

\begin{theorem}
\label{theorem.(q^2,q,q) polyphase BIBD ETFs}
For any prime power $q$,
let $\bfPhi(z)$ be a $[(q+1)\times q]\times(q\times q)$ block matrix whose entries are polynomials over the additive group of $\bbF_q$.
Specifically, for any $x,y,j\in\bbF_q$ and any $i\in\bbF_q\cup\set{\infty}$,
let $\bfPhi(z)$ be the matrix whose $(x,y)$th entry of its $(i,j)$th block is
\begin{equation}
\label{equation.definition of polyphase affine}
\bfPhi(z;i,j;x,y):=\left\{\begin{array}{cl}z^{j(x+y)},&x-y=ij,\ i\neq\infty,\\1,&x=j,\ i=\infty,\\0,&\text{else}.\end{array}\right.
\end{equation}
Then $\bfPhi(z)$ is a $(q^2,q,q)$-polyphase BIBD ETF.  As such:
\begin{enumerate}
\alphi
\item
For any nontrivial character $\gamma$,
$\bfPhi(\gamma)$ is a phased BIBD ETF with parameters~\eqref{equation.ETF parameters of abelian GQ(q-1,q+1)}.
\item
When $q=2^j$, $\gamma$ can be chosen so that $\bfPhi(\gamma)$ is a real ETF.
\item
$\bfPhi(z)^*\bfPhi(z)-(q+1)\bfI$ is an abelian $(q^2,q,q)$-DRACKN.
\item
The matrix \eqref{equation.GQ in terms of polyphase} is the incidence matrix of an abelian $\GQ(q-1,q+1)$.
\end{enumerate}
\end{theorem}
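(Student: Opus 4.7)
The plan is to verify the two requirements of Definition~\ref{definition.polyphase BIBD ETF} directly for $\bfPhi(z)$, using characterization~(iv) of Theorem~\ref{theorem.polyphase BIBD ETF} for the ETF condition. Once this is done, conclusions (a)--(d) follow quickly from prior results: (a) comes from Theorem~\ref{theorem.phased BIBD ETFs} with $(v,k,r)=(q^2,q,q+1)$, giving $d=\tfrac{vr}{r+k-1}=\tfrac{q^2(q+1)}{2q}=\binom{q+1}{2}$; (b) holds because $\bbF_{2^j}^+$ is an elementary abelian $2$-group, so all of its characters take values in $\set{\pm 1}$; (c) is the ``in this case'' clause of Theorem~\ref{theorem.polyphase BIBD ETF}, which yields $c=\tfrac{k(r-1)}{f}=q$ when $k=f=q$; and (d) is Theorem~\ref{theorem.abelian GQ}, since $\bfPhi(z)$ is then a $(v,k,k)$-polyphase BIBD ETF with $(s,t)=(k-1,r)=(q-1,q+1)$.

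For the BIBD requirement, I identify~\eqref{equation.definition of polyphase affine} with the point--line incidence matrix of the affine plane $AG(2,\bbF_q)$. The columns $(j,y)\in\bbF_q^2$ are points, the rows $(i,x)$ are lines, with $i\in\bbF_q$ giving $\set{(j,x-ij):j\in\bbF_q}$ and $i=\infty$ giving the vertical line $\set{(x,y):y\in\bbF_q}$; the support pattern of~\eqref{equation.definition of polyphase affine} matches this incidence exactly, so $\abs{\bfPhi(z)}^2$ is the incidence matrix of a $\BIBD(q^2,q,1)$ with $r=q+1$ and $b=q(q+1)$.

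For the ETF condition, I fix any $((i,x),(j,y))$ with $\bfPhi(z;i,j;x,y)=0$, namely an off-line point $P=(j,y)$ and a line $\ell=(i,x)$. Among all $((i',x'),(j',y'))$, the product $\bfPhi(z;i,j';x,y')\overline{\bfPhi(z;i',j';x',y')}\bfPhi(z;i',j;x',y)$ is nonzero precisely when $P'=(j',y')\in\ell$ and $\ell'=(i',x')$ is the unique line through $P$ and $P'$; this yields exactly $q$ configurations, one per point of $\ell$. Theorem~\ref{theorem.polyphase BIBD ETF}(iv) then reduces to showing that the exponent of the resulting monomial takes every value of $\bbF_q$ exactly once as we vary over these $q$ configurations. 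When $i\in\bbF_q$, substituting the defining relations $y'=x-ij'$, $x'=y+i'j$, and $y'=x'-i'j'$ collapses the exponent to
\begin{equation*}
g=j(x+y)+j'(x-y-ij),
\end{equation*}
with $j'\in\bbF_q$ as free parameter; when $i=\infty$, an analogous substitution yields $g=2y(j-x)+i'(j-x)^2$ with $i'\in\bbF_q$ free. Non-incidence of $P$ with $\ell$ states exactly that $x-y-ij\neq 0$, respectively $j-x\neq 0$, so in both cases the coefficient of the free parameter is a nonzero element of $\bbF_q$ and the affine map is a bijection.

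The main obstacle is the degenerate subcase $j'=j$, where the connecting line $\ell'$ is forced to be vertical and hence $i'=\infty$, so that several polyphase factors collapse to $1$ rather than contributing a nontrivial $z$-power. A separate direct computation verifies that the exponent in this degenerate case equals $j(2x-ij)$, which coincides exactly with the unified formula $j(x+y)+j'(x-y-ij)$ evaluated at $j'=j$; consequently the affine bijection argument needs no modification, and the theorem then follows from the chain of reductions in the first paragraph.
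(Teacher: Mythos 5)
Your proof is correct, and I checked the key computations: for $i\in\bbF_q$ the triple-product exponent does collapse to $j(x+y)+j'(x-y-ij)$ (including the degenerate $j'=j$ case, where the vertical connecting line gives $g=j(2x-ij)$, matching the unified formula), and for $i=\infty$ it collapses to $2y(j-x)+i'(j-x)^2$; in both cases non-incidence makes the affine map in the free parameter a bijection of $\bbF_q$, which is exactly condition~(iv) of Theorem~\ref{theorem.polyphase BIBD ETF} with $\frac kf=1$. Where you differ from the paper is in which equivalent characterization you verify: the paper works with condition~(ii), explicitly computing the polyphase Gram matrix $\bfPhi^*(z)\bfPhi(z)=q\bfI+\bfPsi(z)$ with $\bfPsi(z;j,j';y,y')=z^{-(j-j')(y+y')}$ and then showing $\bfPhi(z)\bfPsi(z)=q\bfPhi(z)+\bfone(z)(\bfJ-\bfX)$, whereas you verify the local combinatorial condition~(iv) at each zero entry. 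The paper's route has the side benefit of exhibiting the Gram matrix (and hence the DRACKN $\bfPhi^*(z)\bfPhi(z)-(q+1)\bfI$) in closed form, which makes its structure transparent; your route is more elementary in that it reduces everything to a single affine bijection over $\bbF_q$ and makes visible exactly where the non-incidence hypothesis $x-y\neq ij$ (resp.\ $j\neq x$) is used. Your handling of (a)--(d) matches the paper's, with the minor bonus that your justification of (b) (all characters of an elementary abelian $2$-group are $\set{\pm1}$-valued) is slightly stronger than the paper's appeal to the existence of a single real-valued character when $f$ is even.
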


\begin{proof}
Let $\bfX=\abs{\bfPhi(z)}^2$.
We claim $\bfX$ is the incidence matrix of a $\BIBD(q^2,q,1)$,
namely an affine plane of order $q$.
(A remark on notation: the index $i$ here corresponds to the ``slope" of a parallel class of affine lines,
with ``$\infty$" corresponding to vertical lines.)
Indeed, for any $(i,x)$ there are exactly $q$ choices of $(j,y)$ such that $\bfX(i,j;x,y)=1$:
when $i\neq\infty$, $y=x-ij$ where $j$ is arbitrary;
when $i=\infty$, $j=x$ and $y$ is arbitrary.
Moreover, if $(j,y)\neq(j',y')$ then there exists exactly one choice of $(i,x)$ such that $\bfX(i,j;x,y)=1=\bfX(i,j';x,y')$:
when $j\neq j'$ we have $x=y+ij=y'+ij'$ where $i=-(y-y')(j-j')^{-1}$;
when $j=j'$ and $y\neq y'$ we have $x=j$ and $i=\infty$.

Having that $\bfX$ is a $\BIBD(q^2,q,1)$, we use Theorem~\ref{theorem.polyphase BIBD ETF}(ii) to show $\bfPhi(z)$ is a $(q^2,q,q)$-polyphase BIBD ETF.
For any $j,j',y,y'\in\bbF_q$, the $(y,y')$th entry of the $(j,j')$th block of $[\bfPhi(z)]^*\bfPhi(z)$ is
\begin{equation}
\label{equation.proof of polyphase affine 1}
[\bfPhi^*(z)\bfPhi(z)](j,j';y,y')
=\sum_{i\in\bbF_q\cup\set{\infty}}\sum_{x\in\bbF_q}\overline{\bfPhi(z;i,j;x,y)}\bfPhi(z;i,j';x,y').
\end{equation}
By \eqref{equation.definition of polyphase affine}, the ``$i=\infty$" part of this sum is
\begin{equation}
\label{equation.proof of polyphase affine 2}
\sum_{x\in\bbF_q}\overline{\bfPhi(z;\infty,j;x,y)}\bfPhi(z;\infty,j';x,y')
=\sum_{x\in\bbF_q}\left\{\begin{array}{cl}1,&x=j=j'\\0,&\text{else}\end{array}\right\}
=\left\{\begin{array}{cl}1,&j=j',\\0,&\text{else}.\end{array}\right.
\end{equation}
Meanwhile, the remaining part of \eqref{equation.proof of polyphase affine 1} is
\begin{equation*}
\sum_{i\in\bbF_q}\sum_{x\in\bbF_q}\overline{\bfPhi(z;i,j;x,y)}\bfPhi(z;i,j';x,y')
=\sum_{i\in\bbF_q}\sum_{x\in\bbF_q}\left\{\begin{array}{cl}z^{-j(x+y)+j'(x+y')},& x-y=ij, x-y'=ij'\\0,&\text{else}\end{array}\right\}.
\end{equation*}
The summands above are thus nonzero only when $x=y+ij$ and $x=y'+ij'$.
When $y+ij\neq y'+ij'$, there is no such $x$.
When $y+ij=y'+ij'$, there is exactly one such $x$, and in this case the exponent of the summand simplifies to
\begin{equation*}
-j(x+y)+j'(x+y')
=-j(y'+ij'+y)+j'(y+ij+y')
=-(j-j')(y+y').
\end{equation*}
That is, the ``$i\neq\infty$" part of \eqref{equation.proof of polyphase affine 1} is
\begin{equation*}
\sum_{i\in\bbF_q}\sum_{x\in\bbF_q}\overline{\bfPhi(z;i,j;x,y)}\bfPhi(z;i,j';x,y')
=\sum_{i\in\bbF_q}\left\{\begin{array}{cl}z^{-(j-j')(y+y')},& y+ij=y'+ij'\\0,&\text{else}\end{array}\right\}.
\end{equation*}
To simplify this even further,
note $y+ij=y'+ij'$ precisely when $y-y'=i(j'-j)$.
When $j'\neq j$, there is exactly one such value of $i$, namely $i=-(y-y')(j-j')^{-1}$.
Meanwhile, when $j'=j$ but $y\neq y'$, no $i\in\bbF_q$ gives $y+ij=y'+ij'$;
when $j=j'$ and $y=y'$, all $i\in\bbF_q$ do.
Thus,
\begin{equation}
\label{equation.proof of polyphase affine 3}
\sum_{i\in\bbF_q}\sum_{x\in\bbF_q}\overline{\bfPhi(z;i,j;x,y)}\bfPhi(z;i,j';x,y')
=\left\{\begin{array}{cl}q,&j=j',y=y',\\0&j=j',y\neq y',\\z^{-(j-j')(y+y')},&j\neq j'.\end{array}\right.
\end{equation}
Summing~\eqref{equation.proof of polyphase affine 2} and~\eqref{equation.proof of polyphase affine 3} gives the following expression for~\eqref{equation.proof of polyphase affine 1}:
\begin{equation*}
[\bfPhi^*(z)\bfPhi(z)](j,j';y,y')
=\left\{\begin{array}{cl}q+1,&j=j',y=y',\\1&j=j',y\neq y',\\z^{-(j-j')(y+y')},&j\neq j'.\end{array}\right.
\end{equation*}
This can be nicely summarized as $\bfPhi^*(z)\bfPhi(z)=q\bfI+\bfPsi(z)$ where $\bfPsi(z;j,j',y,y')=z^{-(j-j')(y+y')}$.
As such, $\bfPhi(z)\bfPhi^*(z)\bfPhi(z)=q\bfPhi(z)+\bfPhi(z)\bfPsi(z)$.
Here, for any $i,j,y,z\in\bbF_q$,
\begin{align}
[\bfPhi(z)\bfPsi(z)](i,j;x,y)
\nonumber&=\sum_{j'\in\bbF_q}\sum_{y'\in\bbF_q}\bfPhi(z;i,j';x,y')\bfPsi(z;j',j;y',y)\\
\nonumber&=\sum_{j'\in\bbF_q}\sum_{y'\in\bbF_q}\left\{\begin{array}{cl}z^{j'(x+y')}z^{(j-j')(y+y')},&x-y'=ij'\\0,&\text{else}\end{array}\right\}\\
\nonumber
\nonumber
\nonumber&=z^{j(x+y)}\sum_{j'\in\bbF_q}z^{j'(x-y-ij)}\\
&=\left\{\begin{array}{cl}qz^{j(x+y)},&x-y=ij,\\\bfone(z),&\text{else},\end{array}\right.
\label{equation.proof of polyphase affine 4}
\end{align}
where $\bfone(z)=\sum_{l\in\bbF_q}z^l$ is the geometric sum over $\bbF_q$.
Meanwhile, for $i=\infty$ and $j,x,y\in\bbF_q$,
\begin{align}
[\bfPhi(z)\bfPsi(z)](\infty,j;x,y)
\nonumber&=\sum_{j'\in\bbF_q}\sum_{y'\in\bbF_q}\bfPhi(z;\infty,j';x,y')\bfPsi(z;j',j;y',y)\\
\nonumber&=\sum_{j'\in\bbF_q}\sum_{y'\in\bbF_q}\left\{\begin{array}{cl}z^{(j-j')(y+y')},&x=j'\\0,&\text{else}\end{array}\right\}\\
\nonumber
\nonumber&=z^{(j-x)y}\sum_{y'\in\bbF_q}z^{(j-x)y'}\\
&=\left\{\begin{array}{cl}q,&x=j,\\\bfone(z),&\text{else}.\end{array}\right.
\label{equation.proof of polyphase affine 5}
\end{align}
Comparing~\eqref{equation.proof of polyphase affine 4} and~\eqref{equation.proof of polyphase affine 5} against \eqref{equation.definition of polyphase affine}, we thus have $\bfPhi(z)\bfPsi(z)=q\bfPhi(z)+\bfone(z)(\bfJ-\bfX)$ and so
\begin{equation*}
\bfPhi(z)\bfPhi^*(z)\bfPhi(z)
=q\bfPhi(z)+\bfPhi(z)\bfPsi(z)
=2q\bfPhi(z)+\bfone(z)(\bfJ-\bfX).
\end{equation*}
By Theorem~\ref{theorem.polyphase BIBD ETF}(ii), this means $\bfPhi(z)$ is a $(q^2,q,q)$-polyphase BIBD ETF.
Having this, (a), (b), (c) and (d) follow quickly from Theorems~\ref{theorem.polyphase BIBD ETF} and~\ref{theorem.abelian GQ}.
\end{proof}

To put Theorem~\ref{theorem.(q^2,q,q) polyphase BIBD ETFs} into context,
ETFs with parameters~\eqref{equation.ETF parameters of abelian GQ(q-1,q+1)},
abelian $(q^2,q,q)$-DRACKNs,
and $\GQ(q-1,q+1)$ were already known to exist~\cite{BodmannPT09,BodmannE10,FickusMT12,CoutinkhoGSZ16,PayneT09}.
The novelty here is that we now know there are $\GQ(q-1,q+1)$ which are abelian,
and so there is an ETF with parameters~\eqref{equation.ETF parameters of abelian GQ(q-1,q+1)} that arises as the columns of a phased $\BIBD(q^2,q,1)$.
Moreover, the construction technique itself seems to be new, being quite different from all other known proofs of the existence of a $\GQ(q-1,q+1)$~\cite{Payne07,PayneT09}.

This construction accounts for half of the known phased BIBD ETFs given in Table~\ref{table.phased BIBD ETFs}, namely those with $u=\frac12(k-1)(k-2)$, cf.\ Theorem~\ref{theorem.phased BIBD necessary conditions}(d).
As we now discuss, the other half arise from abelian $\GQ(q,q^2)$.

\subsection{Constructing an abelian $\GQ(q,q^2)$}

For any prime power $q$,
there are several known constructions of $\GQ(q,q^2)$~\cite{Payne07,PayneT09}.
We show one of these is abelian, cf.\ Definition~\ref{definition.abelian GQ}.
Our main tool is the group action introduced in the appendix of~\cite{Godsil92} that Godsil attributes to Brouwer.
There as here, we actually construct the dual of a $\GQ(q,q^2)$, namely a $\GQ(q^2,q)$.

This construction involves $4$-tuples whose entries lie in the field $\bbF_{q^2}$ of $q^2$ elements, namely vectors in \smash{$\bbF_{q^2}^4$}.
Every $x\in\bbF_{q^2}$ has the property that \smash{$x^{q^2}=x$}.
Moreover, $x$ lies in the subfield $\bbF_q$ of $q$ elements if and only if $x^q=x$.
That is, $\bbF_q$ is the fixed field of the Galois group $\set{\id,\sigma}$ where $\sigma(x):=x^q$ is the Frobenius endomorphism.
If drawing an analogy between $\bbF_{q^2}$ over $\bbF_q$ and $\bbC$ over $\bbR$, $x^q$ plays the role of the complex conjugate of $x$.
In particular, the ``modulus squared" of $x\in\bbF_{q^2}$ is its \textit{field norm} $x^{q+1}=x^q x\in\bbF_q$.
Note $x^{q+1}=0$ if and only if $x=0$.
Also, $x^{q+1}=y$ has $q+1$ distinct solutions for any nonzero $y\in\bbF_q$:
letting $\alpha$ be a generator of the multiplicative group \smash{$\bbF_{q^2}^\times$} of $\bbF_{q^2}$,
we have $\bbF_q^\times=\set{\alpha^i}_{i=0}^{q-2}$;
writing $y=\alpha^{i(q+1)}$ we can take $x=\alpha^{i+j(q-1)}$ for any $j=0,\dotsc,q$.

This ``complex conjugate" analogy also suggests the following ``dot product" on $\bbF_{q^2}^4$:
\begin{equation}
\label{equation.dot product in F_q^2^4}
\bfx\cdot\bfy
=(x_1,x_2,x_3,x_4)\cdot(y_1,y_2,y_3,y_4)
:=x_1^q y_1^{}+x_2^q y_2^{}+x_3^q y_3^{}+x_4^q y_4^{}.
\end{equation}
Here, unlike the complex setting, there are many nonzero vectors that are ``orthogonal" to themselves.
Indeed, \smash{$-(x_2^{q+1}+x_3^{q+1}+x_4^{q+1})\in\bbF_q$} for any \smash{$x_2,x_3,x_4\in\bbF_{q^2}$} and so we can take $x_1$ to be any $(q+1)$th root of it.
The vertices in our $\GQ(q^2,q)$ are projective versions of lines (one-dimensional subspaces) of \smash{$\bbF_{q^2}^4$} that consist entirely of self-orthogonal vectors.
Specifically, let $[\bfx]=[x_1,x_2,x_3,x_4]$ denote the set of all nonzero scalar multiples of a given nonzero vector $\bfx=(x_1,x_2,x_3,x_4)\in\bbF_{q^2}^4$ and consider the vertex set
\begin{equation}
\label{equation.GQ(q^2,q) vertices}
\calV
:=\set{[\bfx]: \bfx\in\bbF_{q^2}^4,\ \bfx\neq\bfzero,\ \bfx\cdot\bfx=0}.
\end{equation}
This set is well-defined since the dot product~\eqref{equation.dot product in F_q^2^4} is sesquilinear.
Meanwhile, the blocks in our $\GQ(q^2,q)$ are projective versions of planes (two-dimensional subspaces) of \smash{$\bbF_{q^2}^4$} that consist entirely of self-orthogonal vectors.
To be precise, let $[\bfy,\bfz]:=\set{[\bfx]: \bfx\in\Span\set{\bfy,\bfz}}$ for any $\bfy,\bfz\in\bbF_{q^2}^4$ and note that for any self-orthogonal \smash{$\bfy,\bfz\in\bbF_{q^2}^4$},
\begin{equation}
\label{equation.GQ(q^2,q) discussion 1}
(a\bfy+b\bfz)\cdot(a\bfy+b\bfz)
=a^qb(\bfy\cdot\bfz)+b^qa(\bfz\cdot\bfy)
=a^qb(\bfy\cdot\bfz)+b^qa(\bfy\cdot\bfz)^{q}.
\end{equation}
Thus, if $\bfy\cdot\bfz=0$ then all elements in $\Span\set{\bfy,\bfz}$ are self-orthogonal.
Conversely, if \eqref{equation.GQ(q^2,q) discussion 1} is zero for all $a,b\in\bbF_{q^2}$ then $\bfy\cdot\bfz=0$ since otherwise we can take $b=1$ and rearrange~\eqref{equation.GQ(q^2,q) discussion 1} to give $a^{q-1}+(\bfy\cdot\bfz)^{q-1}=0$ for all $a\in\bbF_{q^2}$, $a\neq0$, meaning this polynomial of degree $q-1$ has $q^2-1$ roots.
This means our set of blocks is
\begin{equation}
\label{equation.GQ(q^2,q) blocks}
\calB
:=\set{[\bfy,\bfz]: \bfy,\bfz\in\bbF_{q^2}^4,\ \bfy,\bfz\neq\bfzero, [\bfy]\neq[\bfz], \ \bfy\cdot\bfy=\bfz\cdot\bfz=\bfy\cdot\bfz=0}.
\end{equation}

Note that if $\bfx=(a\bfy+b\bfz)$ where $\bfy$ and $\bfz$ are self-orthogonal then $\bfy\cdot\bfx=a(\bfy\cdot\bfy)+b(\bfy\cdot\bfz)=0$ and $\bfz\cdot\bfx=a(\bfz\cdot\bfy)+b(\bfz\cdot\bfz)=0$.
Thus, $\Span\set{\bfy,\bfz}$ is a two-dimensional subspace of
\begin{equation*}
\set{\bfy,\bfz}^\perp
:=\left\{\bfx\in\bbF_{q^2}^4:\begin{array}{r}\bfy\cdot\bfx=0\\\bfz\cdot\bfx=0\end{array}\right\}
=\operatorname{Null}\left(\left[\begin{array}{cccc}y_1^{q}&y_2^{q}&y_3^{q}&y_4^{q}\\z_1^{q}&z_2^{q}&z_3^{q}&z_4^{q}\end{array}\right]\right).
\end{equation*}
If $\bfy$ and $\bfz$ are linearly independent, the above matrix has rank two, implying $\set{\bfy,\bfz}^\perp$ is also two-dimensional and so $\Span\set{\bfy,\bfz}=\set{\bfy,\bfz}^\perp$.
In particular, we see that a vertex $[\bfx]$ is contained in a block $[\bfy,\bfz]$ if and only if $\bfy\cdot\bfx=\bfz\cdot\bfx=0$.

As mentioned above, it is known that the vertices~\eqref{equation.GQ(q^2,q) vertices} and blocks~\eqref{equation.GQ(q^2,q) blocks} form a $\GQ(q^2,q)$~\cite{PayneT09}.
For those researchers primarily interested in new ETF constructions, we have included a short, self-contained and elementary proof of this fact in Appendix~B.

To show the dual of this $\GQ(q^2,q)$ is abelian, let $\beta:=\alpha^{q-1}$.
Then $\beta$ has order $q+1$, and its powers are the $(q+1)$th roots of $1$ in $\bbF_{q^2}$.
Let the cyclic group $\bbZ_{q+1}$ act on \smash{$\bbF_{q^2}^4$} by defining
\begin{equation*}
j\bfx
=j(x_1,x_2,x_3,x_4)
:=(x_1,\beta^j x_2,\beta^j x_3,\beta^j x_4),
\quad\forall j\in\bbZ_{q+1},\, \bfx\in\bbF_{q^2}^4.
\end{equation*}
Since this action commutes with scalar-vector multiplication on \smash{$\bbF_{q^2}^4$}, we can regard it as an action on the corresponding projective space.
That is, $j[\bfx]:=[j\bfx]$ is well-defined.
Also note that since $\beta^{q+1}=1$, this action is unitary with respect to the dot product~\eqref{equation.dot product in F_q^2^4}, that is,
$(j\bfx)\cdot(j\bfy)=\bfx\cdot\bfy$.
Together, these facts imply that this action naturally applies to our GQ's vertices and blocks:
for any $j\in\bbZ_{q+1}$, $[\bfx]\in\calV$ and any $[\bfy,\bfz]\in\calB$,
\begin{align*}
j[\bfx]&:=j[x_1,x_2,x_3,x_4]=[x_1,\beta^jx_2,\beta^jx_3,\beta^jx_4]=[\beta^{-j} x_1,x_2,x_3,x_4]\in\calV,\\
j[\bfy,\bfz]&:=\set{j[\bfx]: [\bfx]\in[\bfy,\bfz]}=\set{[\bfx]: \bfx\in[j\bfy,j\bfz]}=[j\bfy,j\bfz]\in\calB.
\end{align*}

We denote the orbit of any $[\bfx]\in\calV$ under this action as \smash{$\orb[\bfx]:=\set{j[\bfx]}_{j\in\bbZ_{q+1}}$}.
Note that if $\bfx=(x_1,x_2,x_3,x_4)$ where $x_1=0$, this orbit is just the single vertex $[\bfx]$.
If instead $x_1\neq0$, the fact that $\bfx$ is self-orthogonal implies $(x_2,x_3,x_4)\neq(0,0,0)$ and so the vertices $\set{j[\bfx]}_{j\in\bbZ_{q+1}}$ are all distinct.
In particular, the only fixed points of the group action are the vertices
\begin{equation}
\label{equation.definition of GQ(q^2,q) ovoid}
\calO:=\set{[\bfx]\in\calV: x_1=0}.
\end{equation}

Now recall that in order to satisfy Definition~\ref{definition.abelian GQ},
our $\GQ(q,q^2)$ necessarily contains a spread.
This means its dual $(\calV,\calB)$ necessarily contains an \textit{ovoid},
namely a set of vertices $\calO$ with the property that any block contains exactly one of them.
As we now explain, the set~\eqref{equation.definition of GQ(q^2,q) ovoid} is an ovoid.
Indeed, every block $[\bfy,\bfz]$ contains a member of $\calO$, namely either $[\bfy]$ or $[a\bfy+\bfz]$ for some choice of $a\in\bbF_{q^2}$.
Moreover, this member is unique: no self-orthogonal vector can have support of size one,
meaning if $[\bfy],[\bfz]\in\calO$ are distinct but lie in a common block,
performing Gaussian elimination on them produces a basis for $\Span\set{\bfy,\bfz}$ of the form $\set{(0,1,0,a),(0,0,1,b)}$ where $a^{q+1}=b^{q+1}=-1$;
however, since they are a basis for $\Span\set{\bfy,\bfz}$, they are also necessarily orthogonal, a contradiction.

Since there are $(t+1)(st+1)=(q+1)(q^3+1)$ blocks total, each containing a unique vertex in $\calO$,
and since each vertex is contained in $t+1=q+1$ blocks, the number of vertices in $\calO$ is $q^3+1$.
The vertices in $\calV\cap\calO^\rmc$ are of the form $[1,x_1,x_2,x_3]$ where $x_1^{q+1}+x_2^{q+1}+x_3^{q+1}=-1$.
Since the total number of vertices in $\calV$ is $(s+1)(st+1)=(q^2+1)(q^3+1)$, there are exactly $q^2(q^3+1)$ vertices in $\calV\cap\calO^\rmc$.
Moreover, since the orbits of a group action form a partition of the set on which it acts,
and since any orbit generated from a nonovoid vertex has cardinality $q+1$,
the nonovoid vertices $\calV\cap\calO^\rmc$ are partitioned into $q^2(q^2-q+1)$ orbits of size $q+1$.
From each such orbit, pick a representative vertex $[\bfx]$.
This arbitrary choice establishes an ordering amongst the vertices in this orbit:
we regard $[1,\beta^ix_1,\beta^ix_2,\beta^ix_3]$ as the $i$th vertex in $\orb[\bfx]$.

We order the blocks as well.
In particular, for any of the $q^3+1$ ovoid vertices $\bfy$,
fix any $\bfz=(1,z_2,z_3,z_4)$ that is orthogonal to $\bfy$.
Since $\bfy$ is a fixed point of the group action, $[\bfy,j\bf z]$ is a valid block for any $j\in\bbZ_{q+1}$.
Moreover, these blocks are all distinct since
\begin{equation*}
0
=(j\bfz)\cdot(j'\bfz)
=1+\beta^{j'-j}(z_1^{q+1}+z_2^{q+1}+z_3^{q+1})
=1-\beta^{j'-j}
\end{equation*}
if and only if $j=j'$.
This means every block that contains $\bfy$ is of this form,
and we regard $[\bfy, j\bfz]$ as the $j$th block that contains $\bfy$.

Now let $\bfY$ be a $q^2(q^2-q+1)\times(q^3+1)$ array of $(q+1)\times(q+1)$ matrices,
where the rows and columns of this array are indexed by our nonovoid orbit representatives $[\bfx]$ and ovoid vertices $[\bfy]$, respectively.
For any such $[\bfx]$ and $[\bfy]$ and any $i,j\in\bbZ_{q+1}$,
let the $(i,j)$th entry of $(q+1)\times(q+1)$ matrix $\bfY([\bfx],[\bfy])$ be $1$ if the $i$th vertex in $\orb[\bfx]$ lies in the $j$th block that contains $[\bfy]$, that is, if $i\bfx\in\Span\set{\bfy,j\bfz}$, and otherwise be $0$.
This happens if and only if both $\bfx\cdot\bfy=0$ and
\begin{equation*}
0
=(i\bfx)\cdot(j\bfz)
=1+\beta^{j-i}(x_2^qz_2^{}+x_3^qz_3^{}+x_4^qz_4^{}),
\end{equation*}
which requires $j-i$ to be some fixed constant in $\bbZ_{q+1}$.
As such, $\bfY([\bfx],[\bfy])$ is either zero or a permutation matrix which is circulant over $\bbZ_{q+1}$.
At the same time, note the matrix
\begin{equation*}
\bfZ
=\left[\begin{array}{c}\bfI_{q^3+1}^{}\otimes\bfone_{q+1}^\rmT\\\bfY\end{array}\right]
\end{equation*}
is an incidence matrix for the dual of our $\GQ(q^2,q)$;
each of the first $q^3+1$ rows of $\bfZ$ corresponds to a vertex $[\bfy]$ in $\calO$,
and the top of $\bfZ$ has this form since $[\bfy]\in[\bfy,j\bfz]$ for all $j\in\bbZ_{q+1}$.
Thus, by Theorem~\ref{theorem.abelian GQ}, the dual of our $\GQ(q^2,q)$ is indeed an abelian $\GQ(q,q^2)$.
Putting this together with other facts from Theorems~\ref{theorem.polyphase BIBD ETF} and~\ref{theorem.abelian GQ} then immediately gives the following result:

\begin{theorem}
\label{theorem.(q^3+1,q+1,q+1) polyphase BIBD ETFs}
For any prime power $q$, there exists an abelian $\GQ(q,q^2)$ over $\bbZ_{q+1}$.
As such, letting $\bfPhi(z)$ be the corresponding $(q^3+1,q+1,q+1)$-polyphase BIBD ETF:
\begin{enumerate}
\alphi
\item
For any nontrivial character $\gamma$,
$\bfPhi(\gamma)$ is a phased BIBD ETF with parameters~\eqref{equation.new ETF parameters}.
\item
When $q$ is odd, $\gamma$ can be chosen so that $\bfPhi(\gamma)$ is a real ETF.
\item
$\bfPhi(z)^*\bfPhi(z)-(q+1)\bfI$ is an abelian $(q^3+1,q+1,q^2-1)$-DRACKN.
\end{enumerate}
\end{theorem}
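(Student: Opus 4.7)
The plan is to establish the existence of an abelian $\GQ(q,q^2)$ directly; once that is in hand, parts (a), (b), and (c) fall out of Theorem~\ref{theorem.polyphase BIBD ETF} and Theorem~\ref{theorem.abelian GQ} with no further work. Rather than searching for abelian structure inside a previously described $\GQ(q,q^2)$, I would build its dual $\GQ(q^2,q)$ from scratch via the Hermitian-style geometry on $\bbF_{q^2}^4$: take as vertices the projective self-orthogonal lines under~\eqref{equation.dot product in F_q^2^4} and as blocks the projective totally isotropic planes. The GQ axioms for this classical incidence structure can be verified directly, as is done in Appendix~B.

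Next I would introduce the cyclic action: with $\alpha$ a generator of $\bbF_{q^2}^\times$ and $\beta:=\alpha^{q-1}$, define $j\cdot(x_1,x_2,x_3,x_4):=(x_1,\beta^jx_2,\beta^jx_3,\beta^jx_4)$ for $j\in\bbZ_{q+1}$. Since $\beta^{q+1}=1$, this action is unitary for the sesquilinear dot product and therefore permutes both $\calV$ and $\calB$. Its fixed points form $\calO:=\set{[\bfx]\in\calV:x_1=0}$. I would verify that $\calO$ is an ovoid by showing every block $[\bfy,\bfz]$ meets $\calO$ in exactly one vertex: existence follows by Gaussian elimination on the first coordinate, while uniqueness follows because two members of $\calO$ lying in a common plane would yield, after another Gaussian elimination, basis vectors $(0,1,0,a)$ and $(0,0,1,b)$ with $a^{q+1}=b^{q+1}=-1$, contradicting their required mutual orthogonality. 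A counting argument then forces $\abs{\calO}=q^3+1$, so the nonovoid vertices partition into $q^2(q^2-q+1)$ free orbits of size $q+1$.

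The main technical step, and the most likely obstacle, is to label orbits and blocks so that each nonzero submatrix $\bfY([\bfx],[\bfy])$ ends up $\bbZ_{q+1}$-circulant rather than merely a permutation matrix. The plan is to pick a representative $[\bfx]$ in each nonovoid orbit, and for each ovoid vertex $[\bfy]$ to fix a single auxiliary $\bfz=(1,z_2,z_3,z_4)$ with $\bfy\cdot\bfz=0$, so that the $q+1$ blocks through $[\bfy]$ are $\set{[\bfy,j\bfz]:j\in\bbZ_{q+1}}$. Then $i\bfx\in\Span\set{\bfy,j\bfz}$ requires both $\bfx\cdot\bfy=0$ and $(i\bfx)\cdot(j\bfz)=1+\beta^{j-i}(x_2^qz_2+x_3^qz_3+x_4^qz_4)=0$. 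Because only the difference $j-i\pmod{q+1}$ enters the second condition, the set of incident $(i,j)$ is the graph of a translation on $\bbZ_{q+1}$, so each $\bfY([\bfx],[\bfy])$ is either the zero matrix or a $\bbZ_{q+1}$-circulant permutation.

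With this in hand, the vertical concatenation of $\bfI_{q^3+1}\otimes\bfone_{q+1}^\rmT$ above $\bfY$, as in~\eqref{equation.GQ in terms of polyphase}, satisfies the hypothesis of the converse half of Theorem~\ref{theorem.abelian GQ}, which produces a $(q^3+1,q+1,q+1)$-polyphase BIBD ETF $\bfPhi(z)$ with $\bfY=\bfPhi(\bfT)$ and certifies that our $\GQ(q,q^2)$ is abelian. Part~(a) is then the parameter specialization $(s,t)=(q,q^2)$ of Theorem~\ref{theorem.abelian GQ}, yielding $d=q(q^2-q+1)$ and $n-d=q^2-q+1$ as in~\eqref{equation.new ETF parameters}; part~(b) follows since, when $q$ is odd, $q+1$ is even and $\bbZ_{q+1}$ carries the real-valued character $j\mapsto(-1)^j$; and part~(c) is the final DRACKN conclusion of Theorem~\ref{theorem.abelian GQ} with $(s,t)=(q,q^2)$, giving parameters $(q^3+1,q+1,q^2-1)$. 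The genuine difficulty lies in step three: a careless indexing of orbits and blocks produces permutation matrices that are not circulant, and the resulting GQ would fail to be abelian in our sense; making the group action and the cyclic orderings cohere is what makes the whole construction succeed.
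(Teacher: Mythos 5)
Your proposal is correct and follows essentially the same route as the paper: the dual $\GQ(q^2,q)$ built from self-orthogonal lines and totally isotropic planes in $\bbF_{q^2}^4$, the $\bbZ_{q+1}$-action via $\beta=\alpha^{q-1}$ whose fixed points form the ovoid $\calO$, the orbit/block labeling that makes each nonzero block of $\bfY$ a $\bbZ_{q+1}$-circulant permutation because $(i\bfx)\cdot(j\bfz)$ depends only on $j-i$, and the final appeal to the converse half of Theorem~\ref{theorem.abelian GQ}. You have also correctly identified the circulant-indexing step as the crux, which is exactly where the paper concentrates its effort.
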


For $q$ odd, the existence of real ETFs with parameters~\eqref{equation.new ETF parameters} was already known.
In fact, the existence of the corresponding SRGs was proven in~\cite{Godsil92},
and it was this realization that started our investigation into this line of research a few years ago.
For $q$ even, the situation is more interesting.
In particular, complex ETFs with parameters~\eqref{equation.new ETF parameters} do arise as Naimark complements of Steiner ETFs arising from finite projective planes of order $q-1$~\cite{FickusMT12}.
However, finite projective planes are only known to exist when their order is a power of a prime.
This means the complex ETFs given by Theorem~\ref{theorem.(q^3+1,q+1,q+1) polyphase BIBD ETFs} are new whenever $q=2^j$ but $q-1$ is not an odd prime power, such as when $q=16$.
This happens infinitely often.
For example, since $2^j-1\equiv(-1)^j-1\equiv0\bmod 3$ whenever $j$ is even
but $2^j-1\equiv 0\bmod 9$ only when $j\equiv 0\bmod 6$, this means $2^j-1$ is divisible by $3$ but not by $9$ whenever $j\equiv 2,4\bmod 6$, implying it is not a prime power for any such $j$ greater than $2$.

Theorem~\ref{theorem.(q^3+1,q+1,q+1) polyphase BIBD ETFs} accounts for half of the constructions of phased BIBD ETFs given in Table~\ref{table.phased BIBD ETFs};
the other half arise from Theorem~\ref{theorem.(q^2,q,q) polyphase BIBD ETFs}.
Comparing against the constructions given in~\cite{CoutinkhoGSZ16}, we also note that the abelian DRACKNs given by Theorem~\ref{theorem.(q^3+1,q+1,q+1) polyphase BIBD ETFs} also seem to be new for all prime powers $q>2$, having parameter $\delta=n-fc-2=-q(q-1)$.

\begin{figure}
\begin{tiny}
\begin{equation*}
\setlength{\arraycolsep}{2pt}
\settowidth{\NewLengthOne}{$z^2$}
\left[\begin{array}{cccccccccccccccccccccccccccc}
  1&  1&  1&  1&  0&  0&  0&  0&  0&  0&  0&  0&  0&  0&  0&  0&  0&  0&  0&  0&  0&  0&  0&  0&\makebox[\NewLengthOne][c]{0}&\makebox[\NewLengthOne][c]{0}&\makebox[\NewLengthOne][c]{0}&\makebox[\NewLengthOne][c]{0}\\
  0&  0&  0&  0&  1&  1&  1&  1&  0&  0&  0&  0&  0&  0&  0&  0&  0&  0&  0&  0&  0&  0&  0&  0&  0&  0&  0&  0\\
  0&  0&  0&  0&  0&  0&  0&  0&  1&  1&  1&  1&  0&  0&  0&  0&  0&  0&  0&  0&  0&  0&  0&  0&  0&  0&  0&  0\\
  0&  0&  0&  0&  0&  0&  0&  0&  0&  0&  0&  0&  1&  1&  1&  1&  0&  0&  0&  0&  0&  0&  0&  0&  0&  0&  0&  0\\
  0&  0&  0&  0&  0&  0&  0&  0&  0&  0&  0&  0&  0&  0&  0&  0&  1&  1&  1&  1&  0&  0&  0&  0&  0&  0&  0&  0\\
  0&  0&  0&  0&  0&  0&  0&  0&  0&  0&  0&  0&  0&  0&  0&  0&  0&  0&  0&  0&  1&  1&  1&  1&  0&  0&  0&  0\\
  0&  0&  0&  0&  0&  0&  0&  0&  0&  0&  0&  0&  0&  0&  0&  0&  0&  0&  0&  0&  0&  0&  0&  0&  1&  1&  1&  1\\
  0&  0&  0&  0&  0&  0&  0&  0&z^2&  0&  0&  0&  0&z^2&  0&  0&  0&  0&z^2&  0&  0&  0&  0&z^2&  0&  0&  0&  0\\
  0&  0&  0&  0&  0&  0&  0&  0&  0&  z&  0&  0&  0&  0&  z&  0&  0&  0&  0&  z&  z&  0&  0&  0&  0&  0&  0&  0\\
  0&  0&  0&  0&  0&  0&  0&  0&  0&  0&  1&  0&  0&  0&  0&  1&  1&  0&  0&  0&  0&  1&  0&  0&  0&  0&  0&  0\\
  0&  0&  0&  0&  0&  0&  0&  0&  0&  0&  0&z^3&z^3&  0&  0&  0&  0&z^3&  0&  0&  0&  0&z^3&  0&  0&  0&  0&  0\\
  0&  0&  0&  0&  0&  0&  0&  0&  0&  0&z^2&  0&  0&  0&z^3&  0&  0&  0&  1&  0&  0&  0&  z&  0&  0&  0&  0&  0\\
  0&  0&  0&  0&  0&  0&  0&  0&  0&  0&  0&  z&  0&  0&  0&z^2&  0&  0&  0&z^3&  0&  0&  0&  1&  0&  0&  0&  0\\
  0&  0&  0&  0&  0&  0&  0&  0&  1&  0&  0&  0&  z&  0&  0&  0&z^2&  0&  0&  0&z^3&  0&  0&  0&  0&  0&  0&  0\\
  0&  0&  0&  0&  0&  0&  0&  0&  0&z^3&  0&  0&  0&  1&  0&  0&  0&  z&  0&  0&  0&z^2&  0&  0&  0&  0&  0&  0\\
  1&  0&  0&  0&  0&  0&  0&  1&  0&  0&  0&  0&  0&  0&  0&  z&  0&  0&  0&  0&  1&  0&  0&  0&  0&  0&  0&  0\\
z^3&  0&  0&  0&  1&  0&  0&  0&  0&  1&  0&  0&  0&  0&  0&  0&  z&  0&  0&  0&  0&  0&  0&  0&  0&  0&  0&  0\\
z^2&  0&  0&  0&  0&  1&  0&  0&  0&  0&  0&  0&  0&  0&  1&  0&  0&  0&  0&  0&  0&  z&  0&  0&  0&  0&  0&  0\\
  z&  0&  0&  0&  0&  0&  1&  0&  0&  0&  z&  0&  0&  0&  0&  0&  0&  0&  0&  1&  0&  0&  0&  0&  0&  0&  0&  0\\
  0&  1&  0&  0&  0&  0&  0&z^3&  0&  0&  0&  1&  0&  0&  0&  0&z^3&  0&  0&  0&  0&  0&  0&  0&  0&  0&  0&  0\\
  0&z^3&  0&  0&z^3&  0&  0&  0&  0&  0&  0&  0&  1&  0&  0&  0&  0&  0&  0&  0&  0&z^3&  0&  0&  0&  0&  0&  0\\
  0&z^2&  0&  0&  0&z^3&  0&  0&  0&  0&z^3&  0&  0&  0&  0&  0&  0&  1&  0&  0&  0&  0&  0&  0&  0&  0&  0&  0\\
  0&  z&  0&  0&  0&  0&z^3&  0&  0&  0&  0&  0&  0&  0&  0&z^3&  0&  0&  0&  0&  0&  0&  1&  0&  0&  0&  0&  0\\
  0&  0&  1&  0&  0&  0&  0&z^2&  0&  0&  0&  0&z^2&  0&  0&  0&  0&  0&  0&  0&  0&  0&  0&z^3&  0&  0&  0&  0\\
  0&  0&z^3&  0&z^2&  0&  0&  0&z^3&  0&  0&  0&  0&  0&  0&  0&  0&z^2&  0&  0&  0&  0&  0&  0&  0&  0&  0&  0\\
  0&  0&z^2&  0&  0&z^2&  0&  0&  0&  0&  0&  0&  0&z^3&  0&  0&  0&  0&  0&  0&  0&  0&z^2&  0&  0&  0&  0&  0\\
  0&  0&  z&  0&  0&  0&z^2&  0&  0&  0&  0&z^2&  0&  0&  0&  0&  0&  0&z^3&  0&  0&  0&  0&  0&  0&  0&  0&  0\\
  0&  0&  0&  1&  0&  0&  0&  z&  z&  0&  0&  0&  0&  0&  0&  0&  0&  0&  0&z^2&  0&  0&  0&  0&  0&  0&  0&  0\\
  0&  0&  0&z^3&  z&  0&  0&  0&  0&  0&  0&  0&  0&  z&  0&  0&  0&  0&  0&  0&z^2&  0&  0&  0&  0&  0&  0&  0\\
  0&  0&  0&z^2&  0&  z&  0&  0&  0&z^2&  0&  0&  0&  0&  0&  0&  0&  0&  z&  0&  0&  0&  0&  0&  0&  0&  0&  0\\
  0&  0&  0&  z&  0&  0&  z&  0&  0&  0&  0&  0&  0&  0&z^2&  0&  0&  0&  0&  0&  0&  0&  0&  z&  0&  0&  0&  0\\
  0&  0&  0&  1&  0&  0&  0&  0&  0&  0&  0&  0&  0&  0&  0&z^3&  0&  z&  0&  0&  0&  0&  0&  0&  1&  0&  0&  0\\
  0&  0&  0&z^3&  0&  0&  0&  0&  0&  0&  0&  0&  0&  0&  0&  0&z^3&  0&  0&  0&  0&  0&  z&  0&  0&  1&  0&  0\\
  0&  0&  0&z^2&  0&  0&  0&  0&  0&  0&  0&  z&  0&  0&  0&  0&  0&  0&  0&  0&  0&z^3&  0&  0&  0&  0&  1&  0\\
  0&  0&  0&  z&  0&  0&  0&  0&  0&  0&z^3&  0&  z&  0&  0&  0&  0&  0&  0&  0&  0&  0&  0&  0&  0&  0&  0&  1\\
  0&  0&  0&  0&  0&  0&  1&  0&  0&  0&  0&  0&  0&  0&  0&  0&  0&z^3&  0&  0&z^2&  0&  0&  0&  0&  0&  0&  1\\
  0&  0&  0&  0&  0&  0&  0&  1&  0&z^2&  0&  0&  0&  0&  0&  0&  0&  0&  0&  0&  0&  0&z^3&  0&  1&  0&  0&  0\\
  0&  0&  0&  0&  1&  0&  0&  0&  0&  0&  0&z^3&  0&  0&z^2&  0&  0&  0&  0&  0&  0&  0&  0&  0&  0&  1&  0&  0\\
  0&  0&  0&  0&  0&  1&  0&  0&  0&  0&  0&  0&z^3&  0&  0&  0&  0&  0&  0&z^2&  0&  0&  0&  0&  0&  0&  1&  0\\
  1&  0&  0&  0&  0&  0&  0&  0&  0&  0&  0&z^2&  0&  1&  0&  0&  0&  0&  0&  0&  0&  0&  0&  0&  0&  0&  0&  1\\
z^3&  0&  0&  0&  0&  0&  0&  0&  0&  0&  0&  0&z^2&  0&  0&  0&  0&  0&  1&  0&  0&  0&  0&  0&  1&  0&  0&  0\\
z^2&  0&  0&  0&  0&  0&  0&  0&  0&  0&  0&  0&  0&  0&  0&  0&  0&z^2&  0&  0&  0&  0&  0&  1&  0&  1&  0&  0\\
  z&  0&  0&  0&  0&  0&  0&  0&  1&  0&  0&  0&  0&  0&  0&  0&  0&  0&  0&  0&  0&  0&z^2&  0&  0&  0&  1&  0\\
  0&  0&  0&  0&  0&  0&z^3&  0&  0&  0&  0&  0&  0&z^2&  0&  0&  z&  0&  0&  0&  0&  0&  0&  0&  0&  0&  1&  0\\
  0&  0&  0&  0&  0&  0&  0&z^3&  0&  0&  0&  0&  0&  0&  0&  0&  0&  0&z^2&  0&  0&  z&  0&  0&  0&  0&  0&  1\\
  0&  0&  0&  0&z^3&  0&  0&  0&  0&  0&  z&  0&  0&  0&  0&  0&  0&  0&  0&  0&  0&  0&  0&z^2&  1&  0&  0&  0\\
  0&  0&  0&  0&  0&z^3&  0&  0&z^2&  0&  0&  0&  0&  0&  0&  z&  0&  0&  0&  0&  0&  0&  0&  0&  0&  1&  0&  0\\
  0&  1&  0&  0&  0&  0&  0&  0&  0&z^3&  0&  0&  0&  0&  0&  0&  0&  0&  0&  0&  0&  0&  0&  z&  0&  0&  1&  0\\
  0&z^3&  0&  0&  0&  0&  0&  0&  z&  0&  0&  0&  0&  0&z^3&  0&  0&  0&  0&  0&  0&  0&  0&  0&  0&  0&  0&  1\\
  0&z^2&  0&  0&  0&  0&  0&  0&  0&  0&  0&  0&  0&  z&  0&  0&  0&  0&  0&z^3&  0&  0&  0&  0&  1&  0&  0&  0\\
  0&  z&  0&  0&  0&  0&  0&  0&  0&  0&  0&  0&  0&  0&  0&  0&  0&  0&  z&  0&z^3&  0&  0&  0&  0&  1&  0&  0\\
  0&  0&  0&  0&  0&  0&z^2&  0&  0&  z&  0&  0&  1&  0&  0&  0&  0&  0&  0&  0&  0&  0&  0&  0&  0&  1&  0&  0\\
  0&  0&  0&  0&  0&  0&  0&z^2&  0&  0&  0&  0&  0&  0&  z&  0&  0&  1&  0&  0&  0&  0&  0&  0&  0&  0&  1&  0\\
  0&  0&  0&  0&z^2&  0&  0&  0&  0&  0&  0&  0&  0&  0&  0&  0&  0&  0&  0&  z&  0&  0&  1&  0&  0&  0&  0&  1\\
  0&  0&  0&  0&  0&z^2&  0&  0&  0&  0&  0&  1&  0&  0&  0&  0&  0&  0&  0&  0&  z&  0&  0&  0&  1&  0&  0&  0\\
  0&  0&  1&  0&  0&  0&  0&  0&  0&  0&  0&  0&  0&  0&  0&  0&  0&  0&  0&  1&  0&z^2&  0&  0&  0&  1&  0&  0\\
  0&  0&z^3&  0&  0&  0&  0&  0&  0&  0&z^2&  0&  0&  0&  0&  0&  0&  0&  0&  0&  1&  0&  0&  0&  0&  0&  1&  0\\
  0&  0&z^2&  0&  0&  0&  0&  0&  0&  1&  0&  0&  0&  0&  0&z^2&  0&  0&  0&  0&  0&  0&  0&  0&  0&  0&  0&  1\\
  0&  0&  z&  0&  0&  0&  0&  0&  0&  0&  0&  0&  0&  0&  1&  0&z^2&  0&  0&  0&  0&  0&  0&  0&  1&  0&  0&  0\\
  0&  0&  0&  0&  0&  0&  z&  0&z^3&  0&  0&  0&  0&  0&  0&  0&  0&  0&  0&  0&  0&  1&  0&  0&  1&  0&  0&  0\\
  0&  0&  0&  0&  0&  0&  0&  z&  0&  0&  1&  0&  0&z^3&  0&  0&  0&  0&  0&  0&  0&  0&  0&  0&  0&  1&  0&  0\\
  0&  0&  0&  0&  z&  0&  0&  0&  0&  0&  0&  0&  0&  0&  0&  1&  0&  0&z^3&  0&  0&  0&  0&  0&  0&  0&  1&  0\\
  0&  0&  0&  0&  0&  z&  0&  0&  0&  0&  0&  0&  0&  0&  0&  0&  1&  0&  0&  0&  0&  0&  0&z^3&  0&  0&  0&  1
\end{array}\right]
\end{equation*}
\end{tiny}
\caption{
\label{figure.(28,4,4)-polyphase BIBD ETF}
A $(28,4,4)$-polyphase BIBD ETF $\bfPhi(z)$ produced according to Theorem~\ref{theorem.(q^3+1,q+1,q+1) polyphase BIBD ETFs}.
By Theorem~\ref{theorem.polyphase BIBD ETF}, letting $z=1$ gives the incidence matrix of a $\BIBD(28,4,1)$,
while letting $z$ be either $\rmi$, $-1$ or $-\rmi$ produces an ETF $\bfPhi(\gamma)$ with $(d,n)=(21,28)$.
When $z=-1$, this ETF is real.
By Theorem~\ref{theorem.abelian GQ}, the $252\times 112$ filter bank matrix $\bfPhi(\bfT)$ obtained by identifying each entry of the $63\times 28$ matrix $\bfPhi(z)$ with a $4\times 4$ circulant matrix via~\eqref{equation.definition of x(T)} is the incidence matrix of the incidence structure obtained from removing a spread from an abelian $\GQ(3,9)$.
More generally, Theorem~\ref{theorem.(q^3+1,q+1,q+1) polyphase BIBD ETFs} gives $(q^3+1,q+1,q+1)$-polyphase BIBD ETFs for any prime power $q$.
These ETFs are demonstrably new whenever $q$ is an even prime power with the property that $q-1$ is not an odd prime power, which happens infinitely often.
}
\end{figure}

An example of a $(28,4,4)$-polyphase BIBD ETF arising from Theorem~\ref{theorem.(q^3+1,q+1,q+1) polyphase BIBD ETFs} when $q=3$ is given in Figure~\ref{figure.(28,4,4)-polyphase BIBD ETF}.
As discussed earlier, from~\cite{CoutinkhoGSZ16} we know that polyphase BIBD ETFs with these parameters are the only ones capable of generating real ETFs whose Naimark complements achieve the real Gerzon bound.
We also point out that in this example the underlying $\BIBD(28,4,1)$ is nicely arranged, with its top seven rows corresponding to a \textit{parallel class}, namely a set of blocks in the BIBD that partition its vertex set.
This was accomplished by arranging our $q^3+1$ ovoid vertices into $q^2-q+1$ subsets of size $q+1$ according to which member of a special set of $q^2-q+1$ nonovoid orbit representatives they form a block with.

To be precise,
one can show that all blocks in~\eqref{equation.GQ(q^2,q) blocks} are either of the form
\begin{equation*}
[a,b;j]
:=\bigset{[\bfx]: \bfx\in\Span\set{(1,0,a,b),(0,1,-\beta^j b^q,\beta^j a^q)}},\quad j=0,\dotsc,q,
\end{equation*}
where $a^{q+1}+b^{q+1}=-1$ or of the form
\begin{equation*}
[a;j]
:=\bigset{[\bfx]: \bfx\in\Span\set{(1,a,0,0),(0,0,1,\beta^j a)}},\quad j=0,\dotsc,q,
\end{equation*}
where $a^{q+1}=-1$.
Under this organization of our blocks, the $\BIBD(q^3+1,q+1,1)$ will always have an immediately identifiable parallel class provided we choose as many orbit representatives as possible of the form $[1,0,a,b]$ and $[1,a,0,0]$.

This was not done for purely cosmetic reasons.
One benefit to writing our blocks in this way is that it naturally parameterizes the $q^2+1$ vertices that any given block contains,
thereby speeding up our MATLAB implementation of this ETF construction technique.
For example, a vertex $[c,d,e,f]$ lies in $[a,b;j]$ if and only if
\begin{equation*}
(c,d,e,f)
=c(1,0,a,b)+d(0,1,-\beta^j b^q,\beta^j a^q),\text{ i.e. }
\left[\begin{array}{c}e\\f\end{array}\right]
=\left[\begin{array}{rr}a&-\beta^j b^q\\b&\beta^j a^q\end{array}\right]\left[\begin{array}{r}c\\d\end{array}\right].
\end{equation*}
The above matrix is invertible since its determinant is $\beta^j(a^{q+1}+b^{q+1})=-\beta^j\neq0$.
Since $(c,d,e,f)$ is nonzero and only unique up to scalar multiples, this implies $(c,d)$ is necessarily nonzero and only unique up to scalar multiples.
Taking $(c,d)=(0,1)$ gives the ovoid vertex while taking $(c,d)=(1,d)$ where $d\in\bbF_{q^2}$ is arbitrary gives the $q^2$ remaining vertices in this block.
Similarly, $[c,d,e,f]$ lies in $[a;j]$ if and only if
\begin{equation*}
(c,d,e,f)
=c(1,a,0,0)+e(0,0,1,\beta^j a),\text{ i.e. }
\begin{array}{l}d=ac,\\ f=\beta^ja e,\end{array}
\end{equation*}
and taking $(c,e)=(0,1)$ gives the ovoid vertex while taking $(c,e)=(1,e)$ where $e\in\bbF_{q^2}$ gives the $q^2$ nonovoid vertices.

Writing our blocks in this way also benefits our theory:
whenever the columns of $\bfPhi$ form an ETF for their span and $\abs{\bfPhi}^2$ is the incidence matrix of a $\BIBD(v,k,1)$ that contains a parallel class,
we can multiply these columns by unimodular scalars so as to assume, without loss of generality, that the all-ones vector lies in the row space of $\bfPhi$.
As discussed in~\cite{FickusMJPW15}, this means these ETFs have \textit{axial symmetry},
meaning $\bfone$ is an eigenvector for the Gram matrix with a nonzero eigenvalue.
If we further know that $\bfPhi$ is real,
applying Theorem~4.2 of~\cite{FickusMJPW15} to it produces an SRG with parameters
\begin{equation*}
v_{\SRG}=v,
\quad
k_{\SRG}=\tfrac12(v+k-2),
\quad
\lambda_{\SRG}=\tfrac14(v+3k-r-7),
\quad
\mu_{\SRG}=\tfrac14(v+k+r-1).
\end{equation*}
In particular, whenever $q$ is an odd prime power, applying this fact to the real ETFs produced by Theorem~\ref{theorem.(q^3+1,q+1,q+1) polyphase BIBD ETFs} gives SRGs with parameters
$(q^3+1,\frac12q(q^2+1),\frac14(q-1)(q^2+3),\frac14(q+1)(q^2+1))$.
By consulting~\cite{Brouwer07,Brouwer15}, we see that SRGs with these parameters were already known~\cite{Taylor74}.

\appendix

\section{Proof of Theorem~\ref{theorem.phased BIBD necessary conditions}}

We first show that $u$ is a nonnegative integer.
By~\eqref{equation.phased BIBD ETF dimension},
the dimension of the span of any Naimark complement of the columns of $\bfPhi$ is
\begin{equation*}
v-d
=v-\frac{vr}{r+k-1}
=\frac{v(k-1)}{r+k-1}.
\end{equation*}
By~\eqref{equation.BIBD properties} and \eqref{equation.phased BIBD ETF parameter} we thus have
\begin{equation}
\label{equation.proof of phased BIBD necessary conditions 0}
v-d
=\frac{v(k-1)^2}{v+k(k-2)}
=(k-1)^2-\frac{k(k-1)^2(k-2)}{v+k(k-2)}
=(k-1)^2-u.
\end{equation}
Thus, $u$ is an integer.
Moreover, since $k\geq2$ then $u\geq0$.

We next show that $u$ divides $k(k-1)(k-2)$ and $r(k-1)(k-2)$.
When $k=2$, $u=0$ and this is immediate.
For $k\geq 3$, $u>0$ and solving for $v$ in~\eqref{equation.phased BIBD ETF parameter} gives
\begin{equation}
\label{equation.proof of phased BIBD necessary conditions 1}
v=\frac1uk(k-1)^2(k-2)-k(k-2).
\end{equation}
In particular, $v-1=\tfrac1uk(k-1)^2(k-2)-(k-1)^2$ and so
\begin{equation}
\label{equation.proof of phased BIBD necessary conditions 2}
r=\frac{v-1}{k-1}=\frac1uk(k-1)(k-2)-(k-1).
\end{equation}
Since $k$ and $r$ are integers, this implies $u$ divides $k(k-1)(k-2)$.
Moreover, combining~\eqref{equation.proof of phased BIBD necessary conditions 1} and~\eqref{equation.proof of phased BIBD necessary conditions 2} gives the following expression for $\frac vk$:
\begin{equation*}
\frac{v}{k}
=\frac1u(k-1)^2(k-2)-(k-2)
=r+1-\frac1u(k-1)(k-2),
\end{equation*}
which in turn implies
\begin{equation*}
b
=\frac{v}{k}r
=\biggl[r+1-\frac1u(k-1)(k-2)\biggr]r
=r(r+1)-\frac1ur(k-1)(k-2).
\end{equation*}
Since $b$ and $r$ are integers, this implies $u$ also divides $r(k-1)(k-2)$.

We next show $u\leq\frac12(k-1)(k-2)$.
Fisher's inequality states $v\leq b$.
Since $bk=vr$, this can be restated as $k\leq r$.
If $r=k$ then $v=r(k-1)+1=k^2-k+1$ and so~\eqref{equation.phased BIBD ETF dimension} becomes
\begin{equation*}
d
=\frac{vr}{r+k-1}
=\frac{k(k^2-k+1)}{2k-1}
=\frac18(4k^2-2k+3)+\frac3{8(2k-1)},
\end{equation*}
which is only an integer when $k=2$, at which point $u=0=\frac12(k-1)(k-2)$.
As such, it suffices to consider the case where $r\geq k+1$.
Here, \eqref{equation.proof of phased BIBD necessary conditions 2} implies
\begin{equation*}
k+1
\leq r
=\frac1uk(k-1)(k-2)-(k-1),
\end{equation*}
and so $u\leq\frac12(k-1)(k-2)$, as claimed.

Next note~\eqref{equation.phased BIBD ETF dimension} implies $\frac vd-1=\frac{k-1}r$, and multiplying and dividing $v-1=r(k-1)$ by it gives~\eqref{equation.phased BIBD ETF reciprocal Welch bound}.
To prove the stated necessary conditions on real $\bfPhi$, it helps to first prove (a).

For (a), note that since $k\geq2$, $u$ as defined in~\eqref{equation.phased BIBD ETF parameter} is zero if and only if $k=2$.
Moreover, in this case~\eqref{equation.proof of phased BIBD necessary conditions 0} becomes $v-d=1$, that is, $v=d+1$.
Conversely, if $v\leq d+1$ then~\eqref{equation.proof of phased BIBD necessary conditions 0} gives $1\geq(k-1)^2-u$,
that is, $u\geq k(k-2)$, which violates the necessary condition $u\leq\frac12(k-1)(k-2)$ unless $k=2$.
That is, we have $v\geq d+1$ in general, and moreover $v=d+1$ if and only if $k=2$.

Further note that in general, \eqref{equation.proof of phased BIBD necessary conditions 0} and Fisher's inequality ($r\geq k$) imply that $d\geq 2$:
\begin{equation*}
d
=v-(k-1)^2+u
=(r-k+1)(k-1)+1+u
\geq k+u
\geq 2.
\end{equation*}
(This also follows immediately from the fact that the columns of a phased BIBD are not collinear.)
When taken together with Theorem~\ref{theorem.phased BIBD ETFs}, we see that the $v$ and $d$ parameters of a phased BIBD ETF satisfy $1\leq d\leq v-1$ with $v<2d$, where $d=v-1$ if and only if $k=2$.
In particular, if $k>2$ and $\bfPhi$ is real, then its columns satisfy the hypotheses of Theorem~A of~\cite{SustikTDH07}, implying the numbers in~\eqref{equation.phased BIBD ETF reciprocal Welch bound} are odd.
That is, $k$ is even and $r$ is odd, which in turn implies $v=r(k-1)+1$ is even.

What remains to be shown are the equivalences stated in (b), (c) and (d).
To be clear, in each we are assuming that a phased BIBD ETF arising from a $\BIBD(v,k,1)$ exists,
and are simply identifying relationships between its parameters.
We omit the proof of (b) since it is similar to, but less difficult than, the proof of (c).
For (c), note~\eqref{equation.phased BIBD ETF parameter} immediately implies that $u=2$ if and only if $v=\binom{k(k-2)}{2}$.
Moreover, in this case, \eqref{equation.proof of phased BIBD necessary conditions 0} gives
\begin{equation*}
\binom{v-d+1}{2}
=\binom{(k-1)^2-u+1}{2}
=\binom{k(k-2)}{2}
=v.
\end{equation*}
Conversely, if $v=\binom{v-d+1}{2}$ then letting $s=k-1$, \eqref{equation.proof of phased BIBD necessary conditions 0}
and \eqref{equation.phased BIBD ETF parameter} give
\begin{equation*}
v
=\binom{v-d+1}{2}
=\binom{s^2-u+1}{2}
=\frac12\biggl[s^2+1-\frac{s^2(s^2-1)}{v+(s^2-1)}\biggr]\biggl[s^2-\frac{s^2(s^2-1)}{v+(s^2-1)} \biggr].
\end{equation*}
Multiplying by $[v+(s^2-1)]^2$ and simplifying gives $0=(v-1)[2v-(s^2-1)(s^2-2)]$.
Since we assume $v>k\geq 2$, this implies $v=\binom{s^2-1}{2}=\binom{k(k-2)}{2}$.

For (d), if $v=k^2$ then~\eqref{equation.phased BIBD ETF parameter} gives $u=\frac12(k-1)(k-2)$.
Conversely, if $u=\frac12(k-1)(k-2)$ where $k>2$ then~\eqref{equation.proof of phased BIBD necessary conditions 1} gives $v=k^2$.
(If $k=2$, \eqref{equation.proof of phased BIBD necessary conditions 1} is invalid, and indeed the conclusion is false since for any $v\geq 3$ there exists a phased $\BIBD(v,2,1)$ ETF, cf.\ Example~\ref{example.regular simplices as phased BIBD ETFs}).
Next, if $k>2$ then taking $v=k^2$ we have $v>4$ and \eqref{equation.proof of phased BIBD necessary conditions 0} gives
\begin{equation*}
v-d
=(k-1)^2-u
=(k-1)^2-\tfrac12(k-1)(k-2)
=\tfrac12(k^2-k)
=\tfrac12(v-v^{\frac12}).
\end{equation*}
Conversely, if $v-d=\frac12(v-v^{\frac12})$ with $v>4$ then~\eqref{equation.proof of phased BIBD necessary conditions 0} and~\eqref{equation.phased BIBD ETF parameter} gives
\begin{equation*}
\tfrac12(v-v^{\frac12})
=v-d
=(k-1)^2-u
=(k-1)^2-\frac{k(k-1)^2(k-2)}{v+k(k-2)}.
\end{equation*}
Multiplying by $v+k(k-2)$ and simplifying then gives
\begin{equation*}
0
=v^{\frac32}-v-(k^2-2k+2)v^{\frac12}-k(k-2)
=(v^{\frac12}-k)(v^{\frac12}+1)[v^{\frac12}+(k-2)]
\end{equation*}
and so $v=k^2$.
Since $v>4$, $k>2$.

\section{A proof that~\eqref{equation.GQ(q^2,q) vertices} and~\eqref{equation.GQ(q^2,q) blocks} define a $\GQ(q^2,q)$.}

Here, we give an elementary proof that~\eqref{equation.GQ(q^2,q) vertices} and~\eqref{equation.GQ(q^2,q) blocks} form a $\GQ(q^2,q)$.
We do so by verifying they satisfy properties (i)--(v) of a GQ given at the beginning of Subsection~\ref{subsection.GQ}.
Note any two distinct blocks have at most one vertex in common, since the intersection of two distinct planes in \smash{$\bbF_{q^2}^4$} is either $\set{\bfzero}$ or a line.
Any two distinct vertices $[\bfy],[\bfz]$ are contained in at most one block:
the lines spanned by $\bfy$ and $\bfz$ determine a unique plane, and the projective version of this plane is a member of $\calB$ if and only if $\bfy\cdot\bfz=0$.
Also, block $[\bfy,\bfz]$ contains exactly $q^2+1$ vertices, namely $[\bfz]$ and $[\bfy+a\bfz]$ for any $a\in\bbF_{q^2}$.
Thus, $(\calV,\calB)$ satisfies (i), (iii) and (iv).
For (v), note that if $[\bfx]\notin[\bfy,\bfz]$ then there is a unique line in $\Span\set{\bfy,\bfz}$ that is orthogonal to $\bfx$:
since $\bfx\notin\set{\bfy,\bfz}^\perp$,
we have $0=\bfx\cdot(a\bfy+b\bfz)=a(\bfx\cdot\bfy)+b(\bfx\cdot\bfz)$ if and only if $(a,b)$ is a multiple of $(-\bfx\cdot\bfz,\bfx\cdot\bfy)$.

This leaves (ii).
Take any $[\bfx]\in\calV$.
Since the support of any nonzero self-orthogonal vector is at least two,
we may permute the indices of our vectors so as to assume without loss of generality that $\bfx=(1,a,b,c)$ where $a\neq 0$ and $a^{q+1}+b^{q+1}+c^{q+1}=-1$.
Every block contains a unique member of the ovoid $\calO=\set{[\bfy]\in\calV: y_1=0}$.
As such, the number of blocks $[\bfy,\bfz]$ that contain $[\bfx]$ equals the cardinality of $\set{[\bfy]\in\calO: \bfx\cdot\bfy=0}$.
We want to show this number is $q+1$.
Equivalently, since $[\bfy]$ consists of all nonzero scalar multiples of $\bfy$,
we want to show that the cardinality of
\begin{equation*}
\set{(0,d,e,f)\in\bbF_{q^2}^4: (0,d,e,f)\neq(0,0,0,0),\ a^qd+b^qe+c^qf=0,\ d^{q+1}+e^{q+1}+f^{q+1}=0}
\end{equation*}
is $(q+1)(q^2-1)$.
To do so, note that since $a\neq0$, $d=-[(\tfrac ba)^q e+(\tfrac ca)^q f]$ is uniquely determined by $e$ and $f$.
As such, if $(e,f)=(0,0)$ then $(0,d,e,f)=(0,0,0,0)$.
Thus, we wish to compute the number of \smash{$(e,f)\in\bbF_{q^2}^2$}, $(e,f)\neq(0,0)$ such that
$0=[(\tfrac ba)^q e+(\tfrac ca)^q f]^{q+1}+e^{q+1}+f^{q+1}$.
By multiplying by $a^{q+1}$, distributing and then using the fact that $a^{q+1}+b^{q+1}+c^{q+1}=-1$,
we equivalently are counting the number of $(e,f)\neq(0,0)$ such that
\begin{align}
\nonumber
0
&=(b^q e+c^q f)^{q+1}+a^{q+1}e^{q+1}+b^{q+1}f^{q+1}\\
\nonumber
&=(a^{q+1}+b^{q+1})e^{q+1}+(a^{q+1}+c^{q+1})f^{q+1}+b^qcef^q+(b^qcef^q)^q\\
\label{equation.appendix proof 1}
&=-(1+c^{q+1})e^{q+1}-(1+b^{q+1})f^{q+1}+b^qcef^q+bc^qe^qf.
\end{align}

In the case where $1+b^{-1}\neq0$ note this equation implies that if $e=0$ then $f=0$.
As such, in this case, every nonzero solution $(e,f)$ to~\eqref{equation.appendix proof 1} has $e\neq0$, meaning we can divide by $-(1+b^{q+1})e^{q+1}$ to obtain the equivalent equation
\begin{equation*}
0=x^{q+1}-\frac{b^qc}{1+b^{q+1}}x^q-\frac{bc^q}{1+b^{q+1}}x+\frac{1+c^{q+1}}{1+b^{q+1}}
\end{equation*}
where $x:=\frac fe$.
Completing the ``square" and noting that $y^{q+1}=y^2$ for all $y\in\bbF_q$ then gives
\begin{equation*}
0
=\biggl(x-\frac{b^qc}{1+b^{q+1}}\biggr)^{q+1}-\frac{b^{q+1}c^{q+1}}{(1+b^{q+1})^2}+\frac{1+c^{q+1}}{1+b^{q+1}}
=\biggl(x-\frac{b^qc}{1+b^{q+1}}\biggr)^{q+1}-\frac{a^{q+1}}{(1+b^{q+1})^2}.
\end{equation*}
There are exactly $q+1$ such $x$, each obtained by adding \smash{$\frac{b^qc}{1+b^{q+1}}$} to the $q+1$ distinct $(q+1)$th roots of \smash{$\frac{a^{q+1}}{(1+b^{q+1})^2}$}.
For each such $x$, there are $q^2-1$ corresponding nonzero solutions $(e,f)=(e,ex)$ to \eqref{equation.appendix proof 1}, one for each \smash{$e\in\bbF_{q^2}^\times$}, giving exactly $(q+1)(q^2-1)$ solutions total, as claimed.

By the symmetry of~\eqref{equation.appendix proof 1},
a similar argument holds in the case where $1+c^{q+1}\neq0$.
As such, all that remains is to consider the case where $1+b^{q+1}=1+c^{q+1}=0$.
Here, \eqref{equation.appendix proof 1} becomes $y+y^{q}=0$ where $y=b^qcef^q$.
In general, the equation $y+y^q=0$ has $q$ distinct solutions, namely $y=0$ and the $(q-1)$th roots of $-1$, which can be explicitly obtained by writing both $y$ and $-1$ as exponents of the generator of $\bbF_{q^2}^\times$.
When $y=0$, we either have $e=0$ where \smash{$f\in\bbF_{q^2}^\times$} is arbitrary, or $f=0$ where \smash{$e\in\bbF_{q^2}^\times$} is arbitrary.
For any of the remaining $q-1$ values of $y$, we have \smash{$f\in\bbF_{q^2}^\times$} is arbitrary and \smash{$e=\frac1{b^q cf^q}y$}.
Altogether, we again have $(q+1)(q^2-1)$ nonzero solutions $(e,f)$ to \eqref{equation.appendix proof 1}.

\section*{Acknowledgments}

We thank the two anonymous reviewers for the insightful comments and helpful suggestions.
This work was partially supported by NSF DMS 1321779, AFOSR F4FGA05076J002 and an AFOSR Young Investigator Research Program award.
The views expressed in this article are those of the authors and do not reflect the official policy or position of the United States Air Force, Department of Defense, or the U.S.~Government.

\end{document}